\documentclass[12pt]{article}
\usepackage{pifont}
\usepackage{mathrsfs}
\usepackage{graphicx,ifpdf}
\usepackage{amsmath}%[fleqn] 此时为中间对齐
\usepackage{amsfonts,amsthm,amssymb,mathrsfs,bbding}
\usepackage{graphics,multicol}
\usepackage{color,fourier-otf}
\usepackage{float}
\usepackage{enumerate}
\usepackage{enumitem}
\usepackage[hypertexnames=false]{hyperref}
\usepackage[numbers]{natbib} 
\bibliographystyle{abbrvnat}  
%定理复述
\usepackage{thm-restate}
% 添加首行缩进，两个字符
\usepackage{indentfirst}
\setlength{\parindent}{2em}
\usepackage{latexsym,bm}
\pagestyle{myheadings} \markright{}
\allowdisplaybreaks[4]
\textwidth 150mm \textheight
235mm\oddsidemargin=1cm
\evensidemargin=\oddsidemargin\topmargin=-1.5cm

\newtheorem{conj}{Conjecture}
\newtheorem{lem}{Lemma}[section]

\newtheorem{cor}{Corollary}[section]
\newtheorem{defi}{Definition}[section]

\newtheorem{claim}{Claim}

\addtocounter{section}{0}

\begin{document}
	
	\title{Short Brooms in Edge-chromatic Critical Graphs}

	\author{{\small Yonglei Chen,\ \ Yan Cao \footnote{Corresponding author. Email: ycao@dlut.edu.cn (Yan Cao);  ylchen981127@mail.dlut.edu.cn (Yonglei Chen).}}\\[2mm]
		{\footnotesize School of Mathematical Sciences, Dalian University of Technology, Dalian, Liaoning 116024, China}}
	
	\date{}
	
	\maketitle {\flushleft\large\bf Abstract.}
	This paper studies short brooms in edge-chromatic critical graphs. We prove that for any short broom in a $\Delta$-critical graph, at most one color is missing at more than one vertex. Moreover, this color (if exists) is missing at exactly two vertices. Applying this result, we verify the Vertex-splitting Conjecture for graphs with $\Delta \geq 2(n-1)/3$ and the Overfull Conjecture for $\Delta$-critical graphs satisfying $\Delta \geq (2n+5\delta-12)/3$.
	
	{\flushleft\large\bf Keywords: }critical graphs, short brooms, vertex-splitting conjecture, overfull conjecture\vspace{1mm}
	
%	\noindent{\bf 2010 Mathematics Subject Classification}:  05C50
	
	\section{Introduction}
	In this paper, we only consider simple graphs unless specified as multigraphs. Denote by $V(G)$, $E(G)$, $\Delta(G)$ and $\delta(G)$ the vertex set, edge set, maximum degree, and minimum degree of $G$, respectively. For two integers $p$ and $q$, we denote by $[p, q]$ the set $\{i \in \mathbb{Z}\mid p\leq i \leq q\}$. 
	A \emph{proper $k$-edge-coloring} of a graph $G$ is a mapping $\varphi: E(G) \to [1, k]$ such that $\varphi(e) \neq \varphi(f)$ for any two adjacent edges $e$ and $f$. Denote by $\mathcal{C}^k(G)$ the set of all $k$-edge-colorings of $G$. The \emph{chromatic index} $\chi'(G)$ is the smallest integer $k$ such that $G$ admits a proper $k$-edge-coloring. A graph is \emph{$\Delta$-critical} if $\chi'(G) = \Delta(G) + 1$ and $\chi'(H) < \chi'(G)$ for every proper subgraph $H$ of $G$. We follow \cite{GECB} for terminology and notation not defined here.
	
	Let $G$ be a graph with $e \in E(G)$, and let $\varphi \in \mathcal{C}^k(G-e)$ for some integer $k \geq 0$. For a $v \in V(G)$, define the two color sets $\varphi(v) = \{\varphi(f) : f \in E(G) \text{ incident to } v\}$ and $\overline{\varphi}(v) = [1, k] \backslash \varphi(v)$. 
	We call $\varphi(v)$ the set of \textit{colors present} at $v$ and $\overline{\varphi}(v)$ the set of \textit{colors missing} at $v$. In the case where $v$ has exactly one missing color ($|\overline{\varphi}(v)| = 1$), we allow $\overline{\varphi}(v)$ to also represent that specific color. This notion extends to a vertex set $X \subseteq V(G)$ by defining $\overline{\varphi}(X) = \bigcup_{v \in X} \overline{\varphi}(v)$. A vertex set $X$ is called \textit{$\varphi$-elementary} if the missing color sets of its vertices are pairwise disjoint, i.e., $\overline{\varphi}(u) \cap \overline{\varphi}(v) = \emptyset$ for all distinct $u, v \in X$. When the coloring $\varphi$ is clear from the context, we may simply refer to $X$ as \textit{elementary}.
	
%	In the 1960s, Vizing \cite{Vizing1964, Vizing1965} and, independently, Gupta \cite{Gupta1967} introduced the \textit{Vizing fan} in order to prove that $\chi'(G)\leq \Delta(G)+\mu(G)$. 
%	In \cite{GECB}, Stiebitz et al. discussed an extension of the Vizing fan and introduced the \textit{multifan}. It is known that both the vertex set of a Vizing fan and a multifan are $\varphi$-elementary. 

	The concept of the \textit{Vizing fan} was introduced independently by Vizing \cite{Vizing1964, Vizing1965} and Gupta \cite{Gupta1967} in the 1960s as a key tool to prove that $\chi'(G)\leq \Delta(G)+\mu(G)$. Building on this concept, Stiebitz et al. \cite{GECB} later introduced an extension known as the \textit{multifan}. A fundamental property shared by both structures is that their vertex sets are $\varphi$-elementary.
	In 1984, Kierstead \cite{Kierstead1984} introduced the \textit{Kierstead path} to give a strengthening of Vizing's result. He showed that for every graph $G$ with $\chi'(G) =k+1$, the vertex set of Kierstead path with respect to a critical edge $e\in E(G)$ and a $\varphi \in \mathcal{C}^k(G-e)$ is $\varphi$-elementary if $k \geq \Delta(G)+1$. Subsequent work showed that Kierstead's argument also applies when $k =\Delta(G)$, provided a certain degree condition \cite{GECB}.
	In 2000, Tashkinov \cite{Tashkinov2000} introduced the \textit{Tashkinov tree}, which generalized both previous structures. He proved that if $G$ is a multigraph such that $\chi'(G)= k+1 \geq \Delta+2$, $e$ is a critical edge of $G$ and $T$ is a Tashkinov tree with respect to $e$ and $\varphi \in \mathcal{C}^k(G-e)$, then $V(T)$ is $\varphi$-elementary. 
	To apply Tashkinov's idea on simple graphs, Chen, Chen and Zhao \cite{ChenHamiltonicity2017} introduced the \textit{short broom} structure in 2017 as follow.
	\begin{defi}
		Let $G$ be a $\Delta$-critical graph, $xy\in E(G)$ and $\varphi \in \mathcal{C}^{\Delta}(G-xy)$. A short broom with respect to $xy$ and $\varphi \in \mathcal{C}^{\Delta}(G-xy)$ is a sequence $B=(x, xy, y, yz, z,$ $zv_1, v_1,$ $zv_2, v_2,\ldots,$ $zv_p, v_p)$ such that for all $i \in \{1, 2, 3,\cdots, p\}$, $e_i = zv_i$ and $\varphi(zv_i) \in \overline{\varphi}(\{x, y, z,$ $v_1,\cdots, v_{i-1}\})$. If $(x, xy, y, yz, z,$ $e_i, v_i)$ is a Kierstead path with respect to $xy$ and $\varphi$ for any $i \in \{1, 2,$ $3,\cdots, p\}$, we call $B$ a simple broom.
	\end{defi}
	They also proved that $V(B)$ is elementary if $\min\{d(y),d(z)\}<\Delta$ and $|\overline{\varphi}(x)\cup\overline{\varphi}(y)|\leq 4$.
	Later, the condition that $|\overline{\varphi}(x)\cup\overline{\varphi}(y)|\leq 4$ was eliminated in \cite{Cao2019survey}.
	However, the question of whether $V(B)$ remains elementary when $d(y)=d(z)=\Delta$ has remained unresolved.
	This question is particularly intriguing in light of the known behavior of the short Kierstead path $K=(x, xy, y, yz, z, zv, v)$, for which $V(K)$ is not necessarily $\varphi$-elementary when both $y$ and $z$ are vertices of maximum degree \cite{GECB}. However, it does satisfy the property that $|\overline{\varphi}(v) \cap (\overline{\varphi}(x) \cup \overline{\varphi}(y))| \leq 1$. Motivated by this analogy, we investigate the more complex short broom structure and establish the following result.
	
	For a short broom $B$ under $\varphi$ and a color $\alpha \in [1, \Delta]$, define $m_{\varphi, B}(\alpha)$ as $|\{v \in V(B) \mid \alpha \in \overline{\varphi}(v)\}| - 1$ if $\alpha \in \overline{\varphi}(V(B))$, and as 0 otherwise.
	
	\begin{restatable}[Main Theorem]{thm}{broom}\label{ne-broom}
		Let $G$ be a $\Delta$-critical graph with an edge $xy\in E(G)$, and let $\varphi \in \mathcal{C}^{\Delta}(G-xy)$. Suppose $B = (x, xy,$ $y, yz, z, zv_1,$ $v_1,\ldots, zv_p, v_p)$ is a  short broom with respect to $xy$ and $\varphi$. Then 
		\begin{equation*}
			\sum_{\alpha \in [1, \Delta]} m_{\varphi, B}(\alpha) \leq 1.
		\end{equation*}
	\end{restatable}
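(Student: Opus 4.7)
The plan is to argue by contradiction, assuming a counterexample $(G, xy, \varphi, B)$ with $\sum_\alpha m_{\varphi,B}(\alpha) \geq 2$ chosen so that $p$ is minimum. First I would reduce to the case $d(y)=d(z)=\Delta$: if $\min\{d(y),d(z)\} < \Delta$, the results of Chen--Chen--Zhao \cite{ChenHamiltonicity2017} and Cao \cite{Cao2019survey} already give that $V(B)$ is $\varphi$-elementary, so $\sum_\alpha m_{\varphi,B}(\alpha) = 0$. Under $d(y)=d(z)=\Delta$, we have $\overline{\varphi}(z) = \emptyset$ and $|\overline{\varphi}(y)| = 1$; a standard multifan argument gives that $\{x,y,z\}$ is $\varphi$-elementary, and for each $i$ the sequence $(x,xy,y,yz,z,zv_i,v_i)$ is a short Kierstead path with $d(y)=d(z)=\Delta$, so the Kierstead path lemma yields $|\overline{\varphi}(v_i) \cap (\overline{\varphi}(x)\cup\overline{\varphi}(y))| \leq 1$. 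Consequently, the excess overlap counted by $\sum m_{\varphi,B}$ must come primarily from shared missing colors among the $v_i$'s themselves.

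By the minimality of $p$, the prefix $B' = (x,xy,y,yz,z,zv_1,v_1,\dots,zv_{p-1},v_{p-1})$ satisfies $\sum_\alpha m_{\varphi,B'}(\alpha) \leq 1$, so the passage from $B'$ to $B$ must strictly increase this sum, which forces $v_p$ to share at least one missing color with $V(B')$. Pick such a color $\gamma \in \overline{\varphi}(v_p) \cap \overline{\varphi}(u)$ with $u \in V(B')\setminus\{z\}$, and let $\delta = \varphi(zv_p)$; the broom defining property supplies $w \in \{x,y,v_1,\dots,v_{p-1}\}$ with $\delta \in \overline{\varphi}(w)$. The principal technical step is the analysis of the $(\gamma,\delta)$-Kempe chain $P$ starting at $v_p$. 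Since $\overline{\varphi}(z) = \emptyset$ and $\varphi(zv_p) = \delta$, $P$ initially traverses the edge $v_pz$. I would case-split on where $P$ terminates: swapping colors along $P$ yields a new coloring $\varphi'$ in which either (a) the broom structure is preserved but the overlap strictly decreases, contradicting the minimality of the counterexample, or (b) the modification combines with a Vizing-fan rotation at $x$ through $y$ and $z$ to absorb the missing color at $u$, producing a proper $\Delta$-edge-coloring of $G$ and contradicting its criticality.

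The main obstacle, as I anticipate, is the bookkeeping in the Kempe chain analysis when $P$ passes repeatedly through $z$. Because $\overline{\varphi}(z) = \emptyset$, the chain can weave through $z$ via the colored edges $zv_i$, and a swap may destroy the defining property $\varphi(zv_i) \in \overline{\varphi}(\{x,y,z,v_1,\dots,v_{i-1}\})$ for some $i < p$. To control this, I would further refine the choice of minimum counterexample by also minimizing the smallest index $j$ for which $\gamma \in \overline{\varphi}(v_j)$, and pick $\gamma,\delta$ compatibly so that the swap either fails to reach such a $v_j$ (strictly reducing the overlap) or realigns the broom into one strictly shorter than $B$, to which the inductive hypothesis applies. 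The most delicate situation is when $\gamma \in \overline{\varphi}(\{x,y\})$: there the Kempe chain swap must be chained with a subsequent multifan rotation at $y$ to transfer a missing color to $x$ and thereby color $xy$, contradicting $\chi'(G) = \Delta + 1$.
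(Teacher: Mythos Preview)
Your proposal contains a concrete error and a structural gap.

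The concrete error: you assert that for each $i$ the sequence $(x,xy,y,yz,z,zv_i,v_i)$ is a Kierstead path. This is false for a general short broom. The broom condition only requires $\varphi(zv_i)\in\overline{\varphi}(\{x,y,z,v_1,\dots,v_{i-1}\})$; when $\varphi(zv_i)\in\overline{\varphi}(v_j)$ for some $j<i$ rather than in $\overline{\varphi}(x)\cup\overline{\varphi}(y)$, the path $(x,xy,y,yz,z,zv_i,v_i)$ is \emph{not} a Kierstead path, and the Kierstead-path lemma gives no bound on $|\overline{\varphi}(v_i)\cap(\overline{\varphi}(x)\cup\overline{\varphi}(y))|$. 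Hence your claim that the excess overlap ``must come primarily from shared missing colors among the $v_i$'s themselves'' is unjustified. The paper's Case~A is devoted precisely to the situation where two overlaps with $\overline{\varphi}(x)\cup\overline{\varphi}(y)$ occur, and it is the longest case.

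The structural gap: a single $(\gamma,\delta)$-swap at $v_p$ will not suffice. Both $\gamma$ and $\delta$ are, by construction, missing at earlier broom vertices and hence may appear as colors $\varphi(zv_j)$; the swap can therefore disrupt the broom condition at several indices at once, and there is no reason the overlap count decreases rather than merely shifts. The paper does not minimize $p$. Instead it traces each overlap vertex back along an ``$\alpha_0$-inducing sequence'' to a root edge $zv_1$ (respectively $zu_1$) with $\varphi(zv_1)\in\overline{\varphi}(x)\cup\overline{\varphi}(y)$, minimizes the lengths $t,s$ of these inducing sequences, and then splits into Cases~A, B, C according to how many of the repeated colors lie in $\overline{\varphi}(x)\cup\overline{\varphi}(y)$ (at least two, exactly one, none). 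Each case needs not one Kempe change but a chain of coordinated swaps at $x$, $y$, $z$, $v_t$, $u_s$, together with explicit control of whether the relevant $(\alpha,\beta)$-chains contain the edges $zv_1$ or $zu_1$; in several subcases the argument terminates only after rotating colors along the entire inducing sequence and recoloring $yz$ and $xy$. Your final paragraph correctly senses that the bookkeeping is the crux, and your proposed refinement (minimizing the smallest $j$ with $\gamma\in\overline{\varphi}(v_j)$) is in the spirit of the paper's inducing-sequence minimality, but you would still need the three-way case split and the specific multi-step swap sequences the paper constructs; the sketch as written does not supply them.
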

	
	This inequality implies that at most one color is missing at more than one vertex in $V(B)$, and if such a color exists, it is missing at exactly two vertices. This property is analogous to the known behavior of the short Kierstead path under the same degree conditions. Together with the result in \cite{Cao2019survey}, we are now allowed to use the short broom structure in much more scenarios. In this paper, we provide two of its applications as follows.
	
	We first address the \textit{Vertex-splitting Conjecture}. A \textit{vertex-splitting} in $G^*$ at a vertex $v$ gives a new graph $G$ obtained by replacing $v$ with two new adjacent vertices $v_1$ and $v_2$ and partitioning the neighborhood $N_{G^*}(v)$ into two nonempty subsets that, respectively, serve as the set of neighbors of $v_1$ and $v_2$ from $V(G^*)$ in $G$. We say $G$ is obtained from $G^*$ by a vertex-splitting. Hilton and Zhao \cite{vertex-splittingconj} proposed the following conjecture in 1997:
	\begin{conj}[Vertex-splitting Conjecture]
		Let $G^*$ be an $n$-vertex connected class 1 $\Delta$-regular graph with $\Delta>\frac{n}{3}$. If $G$ is obtained from $G^*$ by a vertex-splitting, then $G$ is $\Delta$-critical.
	\end{conj}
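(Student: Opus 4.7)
The plan is to verify the two defining conditions of $\Delta$-criticality for $G$: (a) $\chi'(G) = \Delta+1$, and (b) $\chi'(G-e) = \Delta$ for every $e \in E(G)$. Property (a) should follow from a counting argument: since $G^*$ is $\Delta$-regular and class 1, any $\Delta$-edge-coloring decomposes $E(G^*)$ into $\Delta$ perfect matchings, forcing $n$ even; hence $|V(G)| = n+1$ is odd, every matching in $G$ has at most $n/2$ edges, and $|E(G)| = n\Delta/2+1 > \Delta \cdot n/2$ yields $\chi'(G) \geq \Delta+1$, matched by Vizing's bound $\chi'(G) \leq \Delta(G)+1 = \Delta+1$. (Note $\Delta(G)=\Delta$ because every vertex of $G^*\setminus\{v\}$ keeps degree $\Delta$.)

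For (b), I would argue by contradiction: suppose $\chi'(G-e) = \Delta+1$ for some $e$, and let $H$ be a $\Delta$-critical subgraph of $G-e$. The edge $e$ falls into three cases. If $e = v_1v_2$, lifting a $\Delta$-coloring of $G^*$ directly to $G-v_1v_2$ gives a $\Delta$-coloring, a contradiction. If $e$ is incident to exactly one of $v_1, v_2$, I would lift a $\Delta$-coloring of $G^*-e$ to $G-v_1v_2-e$ and color $v_1v_2$ with the color that is missing at $v$ in the $G^*-e$ coloring (such a color exists because $v$ loses an incident edge). The remaining case, where $e \in E(G^*)$ is not incident to $v$, is the essential one: here $\{v_1,v_2\} \subseteq V(H)$ and $v_1v_2 \in E(H)$, so the split structure is fully preserved inside $H$, and the previous lifting tricks fail.

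In that last case, I would apply the Main Theorem to a short broom in $H$. Choosing a critical edge $xy$ of $H$ — ideally with $\{x,y\} \cap \{v_1,v_2\} \neq \emptyset$, so that the low degrees of $v_1, v_2$ contribute many missing colors — and a coloring $\varphi \in \mathcal{C}^\Delta(H-xy)$, I would grow a maximal short broom $B$. Combining the Main Theorem's bound $\sum_\alpha m_{\varphi,B}(\alpha) \leq 1$ with a double-count of missing colors yields
\[
        \bigl|\overline{\varphi}(V(B))\bigr| \;\geq\; \sum_{u \in V(B)} \bigl(\Delta - d_H(u)\bigr) + 1,
\]
while $|\overline{\varphi}(V(B))| \leq \Delta$ because only $\Delta$ colors exist. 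The hypothesis $\Delta > n/3$, combined with standard critical-graph adjacency lemmas that force many broom vertices to have small color deficiency, should make the right-hand side exceed $\Delta$, producing the contradiction. The main obstacle is exactly here: the Main Theorem's near-elementariness saves only one color overlap, so naive counting closes only the easier range $\Delta \geq 2(n-1)/3$ (the range actually verified in this paper). Closing the remaining gap down to $\Delta > n/3$ appears to require either iterating the broom argument through successive Kempe swaps or supplementing the Main Theorem with auxiliary structures (fans or Tashkinov trees based at $v_1v_2$) whose elementariness absorbs further overlaps.
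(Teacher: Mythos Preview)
The statement you are attempting to prove is a \emph{conjecture}: the paper does not prove it, so there is no proof to compare against. The paper establishes only the restricted range $\Delta \ge \tfrac{2(n-1)}{3}$ (Theorem~\ref{vertex-splitting thm}), a limitation you yourself acknowledge in your final paragraph. So as a proof of the stated conjecture, your proposal is explicitly incomplete, and that is the correct status.

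Your argument for part~(a) and for the two easy subcases of~(b) (namely $e=v_1v_2$ and $e$ incident to exactly one of $v_1,v_2$) is correct. For the essential subcase, however, your plan contains an unjustified step and also diverges from how the paper handles the restricted theorem. You pass to a $\Delta$-critical subgraph $H\subseteq G-e$ and assert that $v_1v_2\in E(H)$; nothing forces an arbitrary critical subgraph to contain this particular edge, and without it the split structure is lost. The paper avoids this entirely: it works with $G'=G-xy$ directly (not with a critical subgraph), observes that $ab=v_1v_2$ remains a \emph{critical edge} of $G'$ because any $\Delta$-coloring of $G-ab$ restricts to one of $G'-ab$, and then applies Corollary~\ref{v-s c} to the full-deficiency pair $(a,b)$ in $G'$. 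That corollary---derived from the Main Theorem via Lemma~\ref{tkpl} and Lemma~\ref{fdpl}---says that when $\Delta\ge\tfrac{2(n-1)}{3}$ at most one vertex outside $\{a,b\}$ can have degree $\Delta-1$; but if $\{x,y\}\cap\{a,b\}=\emptyset$ then deleting $xy$ creates two such vertices, a contradiction, while if $\{x,y\}\cap\{a,b\}\ne\emptyset$ one gets $d_{G'}(a)+d_{G'}(b)=\Delta+1$, violating VAL. Your broom-and-count sketch would have to rebuild this degree-gap mechanism from scratch, and as you concede it cannot push past $\Delta\ge\tfrac{2(n-1)}{3}$ with the tools available---which is exactly why the full conjecture remains open.
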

	Hilton and Zhao \cite{vertex-splittingconj} initially verified the conjecture for graphs with $\Delta > (3\sqrt{7} - 1)n/7 \approx 0.82n$. Later, Song \cite{Song2002} confirmed its validity for a special class of graphs with $\Delta > n/2$. Recently, Cao et al. \cite{CCSVS2022} showed that this Conjecture is true if $\Delta \geq \frac{3(n-1)}{4}$. In this paper, we verify the conjecture for graphs G with $\Delta \geq \frac{2(n - 1)}{3}$ as follows.
	
	\begin{restatable}{thm}{splittingthm}\label{vertex-splitting thm}
		Let $G^*$ be an $n$-vertex connected class 1 $\Delta$-regular graph with $\Delta \geq \frac{2(n - 1)}{3}$. If $G$ is obtained from $G^*$ by a vertex-splitting, then $G$ is $\Delta$-critical.
	\end{restatable}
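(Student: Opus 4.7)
I would prove Theorem \ref{vertex-splitting thm} by contradiction, separately handling the two ingredients of criticality: (i) $\chi'(G)\ge\Delta+1$, and (ii) $\chi'(G-e)\le\Delta$ for every $e\in E(G)$. Note that $|V(G)|=n+1$, $|E(G)|=n\Delta/2+1$, and $n\Delta$ is even because $G^*$ is $\Delta$-regular of class $1$.

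For (i), I split on the parity of $n$. When $n$ is even, $|V(G)|$ is odd and $|E(G)|>\Delta\cdot n/2=\Delta\lfloor|V(G)|/2\rfloor$, so $G$ is overfull and the bound follows. When $n$ is odd (so $\Delta$ is even), I would assume $\psi\in\mathcal{C}^{\Delta}(G)$, set $c=\psi(v_1v_2)$ and $S_i=\psi(v_i)\setminus\{c\}$ for $i=1,2$, and consider two subcases. If $S_1\cap S_2=\emptyset$, merging $v_1,v_2$ back into $v$ yields a $\Delta$-edge-coloring of $G^*$ in which $v$ sees every color of $[1,\Delta]\setminus\{c\}$; a Kempe-chain/multifan argument at $v$ exploiting the density bound $\Delta\ge\tfrac{2(n-1)}{3}$ then gives the contradiction. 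If $S_1\cap S_2\ne\emptyset$, then for any $\alpha\in S_1\cap S_2$ the $(\alpha,c)$-Kempe chain through $v_1$ is forced (by the $c$-edge at $v_2$) to terminate at $v_2$; swapping colors along this chain reduces to the previous subcase.

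For (ii), suppose some $e\in E(G)$ satisfies $\chi'(G-e)=\Delta+1$, and let $H\subseteq G-e$ be a $\Delta$-critical subgraph, so $|V(H)|\le n+1$. Pick any edge $xy\in E(H)$ and any $\varphi\in\mathcal{C}^{\Delta}(H-xy)$, and construct a \emph{maximal} short broom $B=(x,xy,y,yz,z,zv_1,v_1,\ldots,zv_p,v_p)$ with respect to $xy$ and $\varphi$. By \cite{Cao2019survey} when $\min\{d_H(y),d_H(z)\}<\Delta$, and by Theorem \ref{ne-broom} when $d_H(y)=d_H(z)=\Delta$, the set $V(B)$ is $\varphi$-elementary up to at most one color missing at exactly two vertices, whence
\begin{equation*}
\sum_{v\in V(B)}(\Delta-d_{H-xy}(v))=\sum_{v\in V(B)}|\overline{\varphi}(v)|\le|\overline{\varphi}(V(B))|+1\le\Delta+1.
\end{equation*}
Rearranging gives $\sum_{v\in V(B)}d_H(v)\ge(p+2)\Delta+1$, forcing nearly every vertex of $V(B)$ to have $H$-degree $\Delta$. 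The maximality of $B$ then implies that every neighbor of $z$ in $H$ whose edge-color lies in $\overline{\varphi}(V(B))$ already belongs to $V(B)$; since $\overline{\varphi}(V(B))$ nearly exhausts $[1,\Delta]$, this lower-bounds $|V(B)|$ in terms of $d_H(z)$. Applying $\Delta\ge\tfrac{2(n-1)}{3}$ then forces $V(B)$ together with its $H$-neighborhoods to cover more than $n+1$ vertices, contradicting $V(H)\subseteq V(G)$.

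The main obstacle is the odd-$n$ subcase of (i), where overfullness is unavailable and one must carefully recolor $G^*$ using the density hypothesis. A secondary difficulty in (ii) is the case when $e=v_1v_2$ or $e$ is incident to $v_1$ or $v_2$: here the starting edge $xy$ must be chosen so that the broom $B$ can still be extended far enough into $V(G)$ for the degree inequality above to produce the required contradiction.
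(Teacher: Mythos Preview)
Your part~(i) is needlessly complicated, and your odd-$n$ subcase is both vacuous and incorrectly argued. Since $G^*$ is class~1 and $\Delta$-regular, every color class in a $\Delta$-edge-coloring of $G^*$ is a perfect matching, so $n$ must be even. Hence $|V(G)|=n+1$ is odd and $|E(G)|=n\Delta/2+1>\Delta\lfloor(n+1)/2\rfloor$, i.e.\ $G$ is overfull; this is exactly what the paper asserts in one line. (Your Kempe sketch for odd $n$ is also broken: if $\alpha\in S_1\cap S_2$ then both $\alpha$ and $c$ are \emph{present} at $v_1$ and at $v_2$, so neither vertex is an endpoint of the $(\alpha,c)$-chain and the swap does not do what you claim.)

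Your part~(ii) has a genuine gap. Passing to an arbitrary $\Delta$-critical subgraph $H\subseteq G-e$, choosing an \emph{arbitrary} edge $xy\in E(H)$, and building a maximal broom $B$ does not yield a contradiction. For instance, if $d_H(x)=d_H(y)=d_H(z)=d_H(v_1)=\Delta$ (nothing you have said excludes this), the broom stops at $p=1$ with $|\overline\varphi(V(B))|=2$, and your inequality $\sum_{v\in V(B)}d_H(v)\ge(p+2)\Delta+1=3\Delta+1$ is satisfied trivially by four $\Delta$-vertices. Your concluding step, that ``$V(B)$ together with its $H$-neighborhoods cover more than $n+1$ vertices'', is unjustified and fails in this situation. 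More fundamentally, you never use the one structural fact specific to $G$: the split creates a full-deficiency pair $(a,b)$, and it is this pair---not an arbitrary edge---that must anchor the argument.

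The paper's route is quite different and much shorter. It first establishes Corollary~\ref{v-s c} (via Lemma~\ref{tkpl}, which is where Theorem~\ref{ne-broom} actually enters, combined with Lemma~\ref{fdpl}): in any class~2 graph with a full-deficiency pair whose edge is critical and $\Delta\ge\tfrac{2(n-1)}{3}$, at most one vertex outside the pair has degree $\Delta-1$. Then, assuming some $xy\in E(G)$ is not critical, set $G'=G-xy$; the split edge $ab$ remains critical in $G'$. If $\{x,y\}\cap\{a,b\}=\emptyset$, then $d_{G'}(x)=d_{G'}(y)=\Delta-1$, giving \emph{two} such vertices and contradicting Corollary~\ref{v-s c}; otherwise $d_{G'}(a)+d_{G'}(b)=\Delta+1$, contradicting VAL. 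You should reorganize~(ii) around the full-deficiency pair $(a,b)$ rather than around an arbitrarily chosen broom.
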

	
	In fact, the Vertex-splitting Conjecture is a necessary condition for the Overfull Conjecture. In \cite{vertex-splittingconj}, Hilton and Zhao proved that the latter implies the former. Vizing's Theorem \cite{Vizing1964} established that for any simple graph $G$, $\Delta(G) \leq \chi'(G) \leq \Delta(G)+1$. 
	Following Fiorini and Wilson \cite{Fiorini1977}, graphs for which $\chi'(G) = \Delta(G)$ are said to be class 1, and otherwise they are class 2. 
	This leads naturally to a question: which graphs belong to class 1 and which to class 2?
	Holyer \cite{Holyer1981} proved that determining this classification is NP-complete. However, the Overfull Conjecture implies that for graphs with $\Delta(G) > \frac{n}{3}$, this determination can be done in polynomial time.
	A graph $G$ is \textit{overfull} if $|E(G)| > \Delta(G) \lfloor \frac{n}{2} \rfloor$. Clearly, overfull graphs must be class 2. The Overfull Conjecture, proposed by Chetwynd and Hilton \cite{ChetwyndHiltonStar1986, ChetwyndHiltonStar1988}, can be stated as follows. 
	\begin{conj}[Overfull Conjecture]
		If $G$ is a class 2 graph with $\Delta(G)>\frac{n}{3}$, then $G$ contains an overfull subgraph $H$ with $\Delta(H)=\Delta(G)$.
	\end{conj}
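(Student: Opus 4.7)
Strictly speaking the Overfull Conjecture is open, so I cannot attempt it in full generality; the plan is to establish the restricted form that the paper actually targets, namely that every $\Delta$-critical graph with $\Delta \geq (2n+5\delta-12)/3$ contains an overfull subgraph of maximum degree $\Delta$. This suffices for the conjecture in the advertised range, since a minimum (in edges) class~2 counterexample with $\Delta>n/3$ is $\Delta$-critical, and one applies the restricted statement to it.

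Suppose for contradiction that $G$ is $\Delta$-critical with $\Delta \geq (2n+5\delta-12)/3$ but contains no overfull subgraph of maximum degree $\Delta$. First I would locate a critical edge $xy$ whose endpoints, together with at least one further neighbor $z$ of $y$, all have degree $\Delta$; the Vizing Adjacency Lemma, plus the degree hypothesis, guarantees that the set $V_\Delta(G)=\{v\in V(G):d(v)=\Delta\}$ is abundant enough for this. Fix $\varphi\in\mathcal{C}^{\Delta}(G-xy)$ and extend $(x,xy,y,yz,z)$ greedily into a short broom $B=(x,xy,y,yz,z,zv_1,v_1,\ldots,zv_p,v_p)$ of maximal length.

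Next I would apply the Main Theorem to $B$ to conclude $\sum_{\alpha} m_{\varphi,B}(\alpha)\leq 1$, so $V(B)$ is (almost) $\varphi$-elementary. Combined with $|\overline{\varphi}(u)|=\Delta-d_{G-xy}(u)$ this yields
\[
\sum_{u\in V(B)}\bigl(\Delta-d_{G-xy}(u)\bigr) \;\leq\; \Delta+1,
\]
which tightly bounds the degrees of the $v_i$ from below. Maximality of $B$ then forces every remaining neighbor of $z$ to carry a color not missing anywhere on $V(B)$, pinning the edges at $z$ to vertices whose missing sets overlap those in $\overline{\varphi}(V(B))$. From this I would extract a set $W\supseteq V(B)$ of $\Delta$-vertices for which almost all incident edges stay inside $W$.

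The main obstacle, and the technical heart of the argument, is converting this structural control into an overfull subgraph. Following a Niessen/Chetwynd–Hilton style edge count, once $|W|\geq n-\Delta+1$ and only $O(\delta)$ edges leave $W$, the induced subgraph on $W$ (or on $W$ plus one carefully chosen extra vertex to fix the parity of $|W|$) has more than $\Delta\lfloor|W|/2\rfloor$ edges and is therefore an overfull subgraph of maximum degree $\Delta$, contradicting the choice of $G$. The parameter $\delta$ enters through the bound on edges from $W$ to its complement, and balancing this against the $\Delta|W|/2$ term is precisely what produces the threshold $\Delta\geq(2n+5\delta-12)/3$; handling the exceptional case $\sum_{\alpha}m_{\varphi,B}(\alpha)=1$ — when a single color is missing at two vertices of $V(B)$ — requires a preliminary Kempe-chain recoloring to restore full elementariness before the final edge count.
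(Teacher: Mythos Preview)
Your plan diverges from the paper's argument in a way that leaves a real gap, not just a stylistic difference.

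First, the paper's Theorem~1.3 asserts that $G$ \emph{itself} is overfull, not merely that it contains an overfull subgraph; the proof never builds a set $W$ and counts edges into and out of it. Instead it anchors the whole argument at a vertex $x$ of \emph{minimum} degree $k=\delta$, picks a $\Delta$-neighbour $y$ via VAL, and exploits that $|\overline{\varphi}(x)\cup\overline{\varphi}(y)|=\Delta-k+2$ is large. A degree-gap lemma (Lemma~4.4) then forces every other vertex to have degree either $\ge\Delta-k+1$ or $\le n-\Delta+2k-6$, and the proof splits on whether there are two ``low'' vertices (handled by 5-vertex Kierstead paths and the fork lemmas) or at most one (handled by two short Kierstead paths $(x,y,z,u)$ and $(x,y,z,v)$, where the Main Theorem enters only through Lemma~3.1 to force $\max\{d(u),d(v)\}=\Delta$ and contradict non-overfullness).

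Your anchoring at an edge $xy$ with $d(x)=d(y)=\Delta$ is the step that fails. In that situation $|\overline{\varphi}(x)\cup\overline{\varphi}(y)|=2$ and $\overline{\varphi}(z)=\emptyset$, so the only edge at $z$ with a colour in $\overline{\varphi}(\{x,y,z\})$ is the single edge coloured $\overline{\varphi}(y)$; if its other end $v_1$ happens to have degree $\Delta$ the maximal broom is already $\{x,y,z,v_1\}$. Nothing in the Main Theorem prevents this, so you get no lower bound on $|V(B)|$, no way to produce a large $W$, and the Niessen-style edge count in your last paragraph has nothing to bite on. The threshold $(2n+5\delta-12)/3$ in the paper does not arise from balancing boundary edges of a broom-grown set against $\Delta|W|/2$; it comes from the pigeonhole inequalities that locate a common $z\in N(y)\cap N(s_1)\cap N(s_2)$ (or $N(y)\cap N(u)\cap N(v)$) with all three connecting edges coloured from $\overline{\varphi}(x)\cup\overline{\varphi}(y)$, which only works because $x$ has minimum degree.
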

	
	They proved that the conjecture holds for all graphs with $\Delta \geq n-3$. In 2004, Plantholt \cite{plantholt2004overfull} showed that for graphs of even order, if $\delta(G) \geq \sqrt{7}n/3 \approx 0.8819n$, then $G$ satisfies the Overfull Conjecture.
	In \cite{Bongard2003}, Bongard et al. proved that the Overfull Conjecture holds under certain conditions  for graphs of odd order.
	Cao, Chen, Jing and Shan \cite{caoovf2022} provided a result for critical graphs in terms of a relation between maximum and minimum degree: if $G$ is $\Delta$-critical and satisfies $\Delta(G) - \frac{7\delta(G)}{4} \geq \frac{3n - 17}{4}$, then $G$ is overfull. In \cite{Shan2024}, Shan proved that for any $0\leq \varepsilon\leq \frac{1}{14}$, there exists a positive integer $n_0$ such that if $G$ is a graph on $n\geq n_0$ vertices with $\Delta(G)\geq(1-\varepsilon)n$, then the Overfull Conjecture holds for $G$. In this paper, we verify the conjecture for graphs G with $\Delta(G) \geq \frac{2n+5\delta(G)-12}{3}$ as follows.
	
	\begin{restatable}{thm}{overfullthm}\label{overfull thm}
		Let $G$ be a $\Delta$-critical graph of order $n$. If $\Delta(G) \geq \frac{2n+5\delta(G)-12}{3}$, then $G$ is overfull.
	\end{restatable}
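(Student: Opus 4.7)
The plan is to argue by contradiction: suppose $G$ is a $\Delta$-critical graph satisfying $\Delta\geq(2n+5\delta-12)/3$ that is not overfull. Let $V_{\bar\Delta}=\{v:d(v)<\Delta\}$ and $\sigma=\sum_{v\in V_{\bar\Delta}}(\Delta-d(v))=\Delta n-2|E(G)|$. Since graphs of even order cannot be overfull, a preliminary parity-and-degree argument (leaning on Vizing's Adjacency Lemma and the fact that $\sum_v d(v)$ is even) first shows $n$ must be odd under the hypothesis; then $G$ not overfull is equivalent to $\sigma\geq\Delta$, and the task becomes to derive $\sigma\leq\Delta-1$.

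The core step is a single application of Theorem~\ref{ne-broom}. One picks an edge $xy\in E(G)$ and $\varphi\in\mathcal{C}^{\Delta}(G-xy)$, then extends greedily to a short broom $B=(x,xy,y,yz,z,zv_1,v_1,\ldots,zv_p,v_p)$. Because $|\overline{\varphi}(v)|=\Delta-d(v)+\mathbf{1}[v\in\{x,y\}]$ and vertices of $V(B)\cap V_\Delta$ outside $\{x,y\}$ miss no color, Theorem~\ref{ne-broom} yields
\begin{equation*}
2+\sigma_B\;=\;\sum_{v\in V(B)}|\overline{\varphi}(v)|\;=\;\Bigl|\bigcup_{v\in V(B)}\overline{\varphi}(v)\Bigr|+\sum_{\alpha}m_{\varphi,B}(\alpha)\;\leq\;\Delta+1,
\end{equation*}
where $\sigma_B=\sum_{v\in V(B)\cap V_{\bar\Delta}}(\Delta-d(v))$. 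Thus if $xy$ and $\varphi$ can be chosen so that $V(B)\supseteq V_{\bar\Delta}$, then $\sigma_B=\sigma$, giving $\sigma\leq\Delta-1$ and the desired contradiction.

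The hard part is arranging $V(B)\supseteq V_{\bar\Delta}$. Structural consequences of Vizing's Adjacency Lemma together with $3\Delta\geq 2n+5\delta-12$ first pin $V_{\bar\Delta}$ into a small region: every low-degree vertex has at least $\Delta-\delta+1$ neighbors in $V_\Delta$, limiting $|V_{\bar\Delta}|$ and allowing one to pick a $\Delta$-vertex $z$ adjacent to essentially all of $V_{\bar\Delta}$, together with an edge $xy$ absorbing whatever does not fit into $N[z]$. To actually grow the broom through all of $V_{\bar\Delta}$ one exploits the abundance of colors in $\overline{\varphi}(x)$ (of size $\Delta-d(x)+1$), performing Kempe $(\alpha,\beta)$-swaps on chains avoiding $V(B)$ whenever the greedy extension stalls, rerouting $\varphi$ so that each remaining low-degree vertex enters the broom. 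This is precisely where Theorem~\ref{ne-broom} becomes indispensable: the typical situation has $d(y)=d(z)=\Delta$, a regime in which earlier short-broom results gave no elementary-type conclusion. Keeping track of which color pairs can be safely interchanged without destroying either the broom or the bound on $\sum_\alpha m_{\varphi,B}(\alpha)$ is where the technical weight of the proof concentrates.
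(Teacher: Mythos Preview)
Your counting identity is correct: Theorem~\ref{ne-broom} gives $\sum_{v\in V(B)}|\overline{\varphi}(v)|\leq\Delta+1$, so if a single short broom $B$ could swallow all of $V_{\bar\Delta}$ you would indeed obtain $\sigma\leq\Delta-1$. The gap is precisely the step you flag as ``the hard part'': there is no argument here, and in general no reason, for a single $\Delta$-vertex $z$ to be adjacent to every non-$\Delta$-vertex. The short broom structure forces $V(B)\setminus\{x,y\}\subseteq N[z]$, so this adjacency requirement is unavoidable; Vizing's Adjacency Lemma only tells you each low-degree vertex has many $\Delta$-neighbours, not that they share one. Even granting such a $z$, the broom condition $\varphi(zv_i)\in\overline{\varphi}(\{x,y,z,v_1,\dots,v_{i-1}\})$ is a genuine obstruction, and ``Kempe swaps on chains avoiding $V(B)$'' is not a mechanism one can invoke freely: such swaps typically pass through $V(B)$ and can destroy both the broom and the bound $\sum_\alpha m_{\varphi,B}(\alpha)\leq1$ simultaneously. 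Your parity reduction to odd $n$ is also not innocent: for even $n$ the conclusion ``$G$ is overfull'' is vacuously false, so what must be shown is that no even-order $\Delta$-critical graph satisfies the hypothesis, which is essentially the theorem itself in that case.

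The paper takes a completely different route and never attempts to build one large broom. It first invokes a degree dichotomy (Lemma~\ref{ovfdlc}): every vertex other than a fixed minimum-degree vertex $x$ has degree either $\geq\Delta-\delta+1$ or $\leq n-\Delta+2\delta-6$. If two vertices $t_1,t_2$ fall into the low range, the proof constructs five-vertex Kierstead paths and a ``fork'' on $\{x,y,z,s_1,s_2,t_1,t_2\}$, then appeals to Lemmas~\ref{lfork} and~\ref{fork} for a contradiction; Theorem~\ref{ne-broom} is not used here. Only in the remaining case (at most one such $t$) does the argument assume $G$ is not overfull, locate two specific vertices $u,v$ of degree in $[\Delta-\delta+1,\Delta-1]$ each missing a colour from $\overline{\varphi}(x)\cup\overline{\varphi}(y)$, and find a common $z\in N(y)\cap N(u)\cap N(v)$ so that $(x,xy,y,yz,z,zu,u)$ and $(x,xy,y,yz,z,zv,v)$ are both Kierstead paths. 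This pair forms a short broom with exactly two leaves, and Theorem~\ref{ne-broom} (via Lemma~\ref{tkpl}) forces $\max\{d(u),d(v)\}=\Delta$, a contradiction. So the main theorem is applied to a broom of size five, not to one absorbing all of $V_{\bar\Delta}$.
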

	
	%	The paper is organized as follows. Section 2 presents preliminary definitions and existing results. Section 3 introduces our generalized broom structure and proves its key elementary property. Section 4 applies this result to obtain our improvement on the Overfull Conjecture, and Section 5 presents the corresponding result for the vertex-splittingconjecture. We conclude with open problems in Section 6.
	
	\section{Proof of the Main Theorem}
	
	\subsection{Definitions and Preliminary Results}
	
	%Let $G$ be a graph. For $e \in E(G)$, $G-e$ denotes the graph obtained from $G$ by deleting the edge $e$. The symbol $\Delta$ is reserved for $\Delta(G)$, the maximum degree of $G$ throughout this paper. A $k$-vertex in $G$ is a vertex of degree $k$ in $G$, and a $k$-neighbor of a vertex $v$ is a neighbor of $v$ that is a $k$-vertex in $G$. For $u, v \in V(G)$, we use $dist_G(u, v)$ to denote the distance between $u$ and $v$, which is the length of a shortest path connecting $u$ and $v$ in $G$. For $S \subseteq V(G)$, define $dist_G(u, S)=\min_{v \in S} dist_G(u, v)$.
	
	This section outlines the foundational concepts and prior results essential to our proof. 
	For a graph $G$ and an edge $e \in E(G)$, denote by $G-e$ the graph obtained by deleting $e$ from $G$. A vertex is called a \emph{$k$-vertex} if its degree equals $k$, and a neighbor of vertex $v$ is a \emph{$k$-neighbor} if it is a $k$-vertex in $G$.
	%	We focus on local structures induced by critical edges, which provide powerful tools for analyzing edge-chromatic critical graphs. Our work generalizes a key property of Kierstead paths to the more general structure of a short broom.
	%	
	Let $G$ be a class 2 graph with $\Delta(G) = \Delta$. An edge $e$ in $G$ is called \textit{critical} if $\chi'(G-e)<\chi'(G)$. A connected class 2 graph is said to be \textit{$\Delta$-critical} if every edge is critical. 
	%	Edge-chromatic critical graphs have more structure than general class 2 graphs. For example, Vizing's Adjacency Lemma (VAL) from 1965 \cite{V.G.  Vizing} is a useful tool that reveals certain structure at a vertex by assuming the criticality of an edge.
	
	\begin{lem}[Vizing's Adjacency Lemma (VAL), \cite{VizingCritical1965}]\label{VAL}
		Let $G$ be a class 2 graph with maximum degree $\Delta$. If $e=xy$ is a critical edge of $G$, then $x$ has at least $\Delta-d_G(y)+1$ $\Delta$-neighbors in $V(G)\backslash\{y\}$.
	\end{lem}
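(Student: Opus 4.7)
This is Vizing's classical Adjacency Lemma, and the plan is to prove it by combining the inextendibility of a $\Delta$-edge-coloring of $G-xy$ with a Kempe chain swap. Since $xy$ is critical, there exists $\varphi \in \mathcal{C}^{\Delta}(G-xy)$, and in $G-xy$ the vertex $y$ has degree $d_G(y)-1$, so $|\overline{\varphi}(y)| = \Delta - d_G(y) + 1 =: t$. The goal breaks naturally into two parts: first, producing $t$ distinct neighbors of $x$ in $V(G)\setminus\{y\}$, and second, showing each of these neighbors has degree $\Delta$.

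For the first part, I would argue that for each $\alpha \in \overline{\varphi}(y)$ we must have $\alpha \in \varphi(x)$: otherwise $\alpha \in \overline{\varphi}(x)$, and assigning color $\alpha$ to the uncolored edge $xy$ extends $\varphi$ to a $\Delta$-edge-coloring of $G$, contradicting $\chi'(G) = \Delta+1$. Hence there is a unique neighbor $y_{\alpha} \in N_G(x)\setminus\{y\}$ with $\varphi(xy_{\alpha}) = \alpha$, and distinct colors in $\overline{\varphi}(y)$ give distinct neighbors $y_\alpha$. This already yields $t$ neighbors of $x$ different from $y$.

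The second step, which is the heart of the lemma, is to show that each such $y_{\alpha}$ is a $\Delta$-vertex. Suppose for contradiction that $d_G(y_{\alpha}) < \Delta$ for some $\alpha \in \overline{\varphi}(y)$, and pick $\beta \in \overline{\varphi}(y_{\alpha})$. If $\beta \in \overline{\varphi}(x)$, then recoloring $xy_{\alpha}$ with $\beta$ (which is valid at both endpoints) frees the color $\alpha$ at $x$, after which assigning $\alpha$ to $xy$ produces a proper $\Delta$-edge-coloring of $G$, a contradiction. Otherwise $\beta \in \varphi(x)$, and I would invoke a Kempe chain argument on the $(\alpha,\beta)$-colored subgraph. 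Consider the maximal $(\alpha,\beta)$-alternating path $P$ starting at $y_{\alpha}$. Because $\beta \notin \varphi(y_{\alpha})$, the vertex $y_{\alpha}$ is an endpoint of $P$, and because $\alpha = \varphi(xy_{\alpha})$, the path $P$ enters $x$ through $xy_{\alpha}$ and leaves $x$ along its $\beta$-edge. Performing a Kempe swap on a carefully chosen $(\alpha,\beta)$-component -- either $P$ itself when it avoids $y$, or the $(\alpha,\beta)$-component containing $y$ otherwise -- yields a new $\Delta$-coloring $\varphi'$ of $G-xy$ in which a color becomes simultaneously missing at $x$ and at $y$, so that $xy$ can be colored, contradicting $\chi'(G) = \Delta+1$.

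The main obstacle is the bookkeeping in the Kempe swap: one must split on whether $\beta \in \overline{\varphi}(y)$ or $\beta \in \varphi(y)$, and on whether the $(\alpha,\beta)$-Kempe component through $y_{\alpha}$ meets $y$. In each subcase one selects the swap (on the component through $y_\alpha$ when it is disjoint from $y$, on the component through $y$ when they coincide) so that the resulting coloring $\varphi'$ has $\overline{\varphi'}(x)\cap\overline{\varphi'}(y)\neq\emptyset$ without destroying the fact that $xy$ is uncolored. Verifying this in every subcase, and thereby forcing $d_G(y_\alpha)=\Delta$, is the technical core of the argument.
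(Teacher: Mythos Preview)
The paper does not prove this lemma; it is stated with a citation to Vizing and used as a black box, so there is no in-paper argument to compare against. I therefore evaluate your proposal on its own merits.

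Your first step is correct: every $\alpha\in\overline\varphi(y)$ must lie in $\varphi(x)$, giving $t=\Delta-d_G(y)+1$ distinct neighbours $y_\alpha\in N_G(x)\setminus\{y\}$. The second step, however, does not go through. You try to show that each such $y_\alpha$ is a $\Delta$-vertex via a single $(\alpha,\beta)$-Kempe swap, but note that $x$ sees both colours $\alpha$ (on $xy_\alpha$) and $\beta$ (by hypothesis), so $x$ is \emph{internal} to whatever $(\alpha,\beta)$-chain it lies on. Swapping any $(\alpha,\beta)$-component therefore leaves $\overline\varphi(x)$ unchanged. If the swapped chain avoids $y$, then $\overline\varphi(y)$ is unchanged as well; if it contains $y$ (so $\beta\in\varphi(y)$ and $y$ is its other endpoint), then after the swap $\beta$ replaces $\alpha$ in $\overline\varphi(y)$, yet $\beta$ is still present at $x$. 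In either subcase $\overline{\varphi'}(x)\cap\overline{\varphi'}(y)=\emptyset$, so no such swap yields a colour missing at both $x$ and $y$. In fact the assertion that \emph{every} $y_\alpha$ has degree $\Delta$ is strictly stronger than VAL and is not what the lemma guarantees: the $\Delta$-neighbours it promises need not be the $y_\alpha$ themselves.

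The standard proof---and the one consonant with the paper's toolkit---builds instead a maximal multifan $F=(x,xy,y,xy_1,y_1,\dots,xy_p,y_p)$ centred at $x$. By Lemma~\ref{multifan}(a) the set $V(F)$ is $\varphi$-elementary, and maximality forces every colour in $\bigcup_{i\ge 0}\overline\varphi(y_i)$ to equal $\varphi(xy_j)$ for some $j\in[1,p]$, whence $\sum_{i=0}^{p}|\overline\varphi(y_i)|\le p$. Substituting $|\overline\varphi(y)|=\Delta-d_G(y)+1$ and $|\overline\varphi(y_i)|=\Delta-d_G(y_i)$ for $i\ge 1$ and rearranging shows that at least $\Delta-d_G(y)+1$ of the $y_i$ have degree $\Delta$.
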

	
	A key construct in edge coloring is the \textit{Kempe chain}. Let $G$ be a graph with $e \in E(G)$, and let $\varphi \in \mathcal{C}^k(G-e)$ for some integer $k \geq 0$. For five distinct colors $\alpha, \beta,$ $\zeta, \gamma,$ $\tau \in [1, k]$, an $(\alpha,\beta)$-chain is a component of the subgraph of $G$ induced by edges colored either $\alpha$ or $\beta$. Such a component is necessarily a path or an even cycle. 
	If an $(\alpha,\beta)$-chain $P$ is a path with an endvertex $x$, we use the notation $P_x(\alpha, \beta, \varphi)$ to stress this endpoint or simply $P_x(\alpha, \beta)$ when $\varphi$ is understood. For a vertex $u$ and an edge $uv$ contained in $P_x(\alpha, \beta, \varphi)$, we write $u \in P_x(\alpha, \beta, \varphi)$ and $uv \in P_x(\alpha, \beta, \varphi)$. If $u, v \in P_x(\alpha, \beta, \varphi)$ such that $u$ lies between $x$ and $v$ on $P$, then we say that $P_x(\alpha, \beta, \varphi)$ \textit{meets $u$ before $v$}. 
	If an $(\alpha,\beta)$-chain $P$ is a path containing both vertices $x$ and $y$, we denote by $P_{[x,y]}(\alpha, \beta, \varphi)$ the subchain with endvertices $x$ and $y$.  
	
	The operation of swapping the colors $\alpha$ and $\beta$ on all edges of an $(\alpha,\beta)$-chain $C$ is a \textit{Kempe change}, and the resulting new $k$-coloring is denoted $\varphi_1 = \varphi/C$. When the chain $C$ is a path starting at $x$, this operation is equivalently described as doing an \textit{$(\alpha,\beta)$-swap at $x$}. By convention, an $(\alpha,\alpha)$-swap at $x$ leaves the coloring unchanged.
	This concept extends to sequences of Kempe changes. Suppose $\beta_0 \in \overline{\varphi}(x)$ and $\beta_1, \ldots, \beta_t \in \varphi(x)$ for some integer $t \geq 1$. 
	A $(\beta_0,\beta_1)-(\beta_1,\beta_2)-\cdots-(\beta_{t-1},\beta_t)$-\textit{swap} at $x$ consists of $t$ consecutive Kempe changes: defining $\varphi_0 = \varphi$, we set $\varphi_i = \varphi_{i-1}/P_x(\beta_{i-1}, \beta_i, \varphi_{i-1})$ for each $i \in [1,t]$.
	If an edge $uv$ currently has color $\alpha$, the notation $uv: \alpha \rightarrow \beta$ indicates recoloring $uv$ with color $\beta$. 
	
	To compactly represent sequences of operations, we employ a two-row matrix notation. For example, the matrix
	\[
	\begin{bmatrix}
		P_{[a,b]}(\alpha,\beta,\varphi) &  rs & xy\\
		\alpha/\beta & \gamma \to \tau & \zeta
	\end{bmatrix}
	\]
	indicates three consecutive operations:
	\begin{itemize}[leftmargin=*, labelindent=1cm]
		\item[Step 1.] Exchange colors $\alpha$ and $\beta$ on the $(\alpha,\beta)$-subchain $P_{[a,b]}(\alpha,\beta,\varphi)$;
		\item[Step 2.] Recolor $rs$ from $\gamma$ to $\tau$ under the coloring obtained from Step 1;
		\item[Step 3.] Color $xy$ by $\zeta$ under the coloring obtained from Step 2.
	\end{itemize}
	
	The connectivity within these chains defines a relation between vertices. Two vertices $x, y \in V(G)$ are said to be \textit{$(\alpha,\beta)$-linked} under $\varphi$ if they are in the same $(\alpha,\beta)$-chain; otherwise, they are \textit{$(\alpha,\beta)$-unlinked}. When the coloring $\varphi$ is implicitly understood, we may omit the reference to it and state directly that $x$ and $y$ are $(\alpha,\beta)$-linked or unlinked.
	
	\subsection{Multifan, Kierstead Path and Short Broom}
	
	The fan argument, introduced by Vizing \cite{Vizing1964, Vizing1965}, was later generalized to the concept of a multifan \cite{GECB}.
	
	%The fan argument was introduced by Vizing  in his classical results on the upper bounds of chromatic indices for simple graphs and multigraphs. We will use multifan, a generalized version of Vizing fan, given by Stiebitz et al. \cite{GECB} , in our proof.
	
	\begin{defi}
		Let $G$ be a graph, $e=rs_1 \in E(G)$ and $\varphi \in \mathcal{C}^k(G-e)$ for some integer $k \geq 0$. A multifan centered at $r$ with respect to $e$ and $\varphi$ is a sequence $F_{\varphi}(r, s_1: s_p):=(r, rs_1, s_1, rs_2, s_2, \ldots,$ $rs_p, s_p)$ with $p \geq 1$ consisting of distinct vertices $r, s_1, s_2, \ldots, s_p$ and distinct edges $rs_1,$ $rs_2, \ldots, rs_p$ satisfying the following condition:
		\begin{itemize}
			\item[(F1)] For every edge $rs_i$ with $i \in [2, p]$, there exists $j \in [1, i-1]$ such that $\varphi(rs_i) \in \overline{\varphi}(s_j)$.
		\end{itemize}
	\end{defi}
	
	We will simply denote a multifan $F_{\varphi}(r, s_1: s_p)$ by $F$ if $\varphi$ and the vertices and edges in $F_{\varphi}(r, s_1: s_p)$ are clear. The following result regarding a multifan can be found in [\cite{GECB}, Theorem 2.1].
	
	\begin{lem}[\cite{GECB}, Theorem 2.1]\label{multifan}
		Let $G$ be a class 2 graph and $F_{\varphi}(r, s_1: s_p)$ be a multifan with respect to a critical edge $e=rs_1$ and a coloring $\varphi \in \mathcal{C}^{\Delta}(G-e)$. Then the following statements hold.
		\begin{itemize}
			\item[(a)] $V(F)$ is $\varphi$-elementary.
			\item[(b)] Let $\alpha \in \overline{\varphi}(r)$. Then for every $i \in [1, p]$ and $\beta \in \overline{\varphi}(s_i)$, $r$ and $s_i$ are $(\alpha, \beta)$-linked with respect to $\varphi$.
		\end{itemize}
	\end{lem}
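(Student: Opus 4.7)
The plan is to prove parts (a) and (b) together by induction on $p$, using two complementary tools: the \emph{fan shift}, which uses condition (F1) to recolor the edges $rs_1, \ldots, rs_p$ and move the uncolored edge from $rs_1$ to $rs_p$ while keeping the missing set at $r$ unchanged; and Kempe swaps along $(\alpha,\beta)$-chains. By (F1), there is a map $\pi : [2,p] \to [1,p-1]$ with $\varphi(rs_i) \in \overline{\varphi}(s_{\pi(i)})$, and iterating $\pi$ from $p$ down to $1$ specifies which edges are recolored during the shift and in what order.

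For the base case $p = 1$, statement (a) asserts $\overline{\varphi}(r) \cap \overline{\varphi}(s_1) = \emptyset$: if some $\gamma$ lay in both, coloring $rs_1$ with $\gamma$ would extend $\varphi$ to a proper $\Delta$-edge-coloring of $G$, contradicting the criticality of $e = rs_1$. Statement (b) for $p = 1$ follows immediately: if $\alpha \in \overline{\varphi}(r)$, $\beta \in \overline{\varphi}(s_1)$, and $r, s_1$ are $(\alpha,\beta)$-unlinked, the $(\alpha,\beta)$-swap at $s_1$ leaves $r$ untouched, producing $\varphi' \in \mathcal{C}^{\Delta}(G-e)$ with $\alpha \in \overline{\varphi'}(r) \cap \overline{\varphi'}(s_1)$, contradicting (a).

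For the inductive step of (a), suppose some $\gamma \in \overline{\varphi}(s_p) \cap \overline{\varphi}(x)$ with $x \in \{r, s_1, \ldots, s_{p-1}\}$. When $x = r$, apply the fan shift: this produces $\varphi' \in \mathcal{C}^{\Delta}(G - rs_p)$ in which $\gamma$ is still missing at both $r$ and $s_p$, so setting $\varphi'(rs_p) := \gamma$ extends to a proper $\Delta$-coloring of $G$, contradicting criticality. When $x = s_j$ with $j < p$, pick $\alpha \in \overline{\varphi}(r)$ (which exists since $r$ is incident to at most $\Delta - 1$ colored edges); the inductive elementarity forces $\alpha \neq \gamma$. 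Both $s_j$ and $s_p$ are endpoints of their respective $(\alpha,\gamma)$-chains, and since no edge at $r$ is colored $\alpha$, at most one of these two chains meets $r$, and only as an endpoint. Performing the $(\alpha,\gamma)$-swap along a chain that avoids $r$ keeps every $\varphi(rs_k)$ untouched, so the multifan indexing is preserved, and yields a coloring with $\alpha \in \overline{\varphi}(r) \cap \overline{\varphi}(s_\ell)$ for some $\ell \in \{j, p\}$, reducing to the previous case. Once (a) holds at level $p$, statement (b) at level $p$ follows: if $r$ and $s_i$ are $(\alpha,\beta)$-unlinked, the $(\alpha,\beta)$-swap at $s_i$ avoids $r$ and yields $\alpha \in \overline{\varphi}(r) \cap \overline{\varphi}(s_i)$ in the new coloring, contradicting (a).

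The main obstacle is the bookkeeping required to certify that each Kempe swap and each fan-shift step produces a coloring under which $(r, rs_1, s_1, \ldots, rs_p, s_p)$ remains a valid multifan with the same indexing: the colors on the edges $rs_1, \ldots, rs_p$ must still form a proper coloring, and condition (F1) must still hold. The choice $\alpha \in \overline{\varphi}(r)$ is what makes this possible, because it forces any $(\alpha,\gamma)$-chain meeting $r$ to have $r$ as an endpoint, so at least one of the chains rooted at $s_j$ and $s_p$ is disjoint from $r$ and can be swapped without disturbing any fan edge.
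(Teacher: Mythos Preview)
The paper does not supply its own proof of this lemma; it is quoted as Theorem~2.1 of \cite{GECB} and used as a black box throughout. So there is no in-paper proof to compare against. Your outline is the standard fan-shift\,/\,Kempe-chain argument and is correct in spirit, but the bookkeeping you flag as ``the main obstacle'' is not actually carried out, and the gap is real.

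When $\gamma \in \overline{\varphi}(s_p)\cap\overline{\varphi}(s_j)$ and you swap an $(\alpha,\gamma)$-chain avoiding $r$, you assert that ``the multifan indexing is preserved'' because every $\varphi(rs_k)$ is untouched. That is not sufficient: condition (F1) also depends on $\overline{\varphi}(s_{\pi(k)})$. If there is an index $k$ with $\varphi(rs_k)=\gamma$ (and inductive elementarity forces $\pi(k)=j$), and if the swapped chain has $s_j$ as an endpoint, then after the swap $\gamma\notin\overline{\varphi}'(s_j)$ and (F1) fails at $k$. The clean repair is to actually \emph{use} the part-(b) inductive hypothesis you set up: since $j\le p-1$, it guarantees $r$ and $s_j$ are $(\alpha,\gamma)$-linked, hence the chain through $s_p$ avoids both $r$ and $s_j$; swapping \emph{that} chain leaves $\overline{\varphi}(s_m)$ unchanged for every $m<p$ (its other endpoint cannot lie in $\{r,s_1,\dots,s_{p-1}\}$ by inductive elementarity), and (F1) genuinely survives. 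The same issue recurs in your derivation of (b) from (a): after the $(\alpha,\beta)$-swap at $s_i$, any index $k$ with $\varphi(rs_k)=\beta$ has $\pi(k)=i$ and (F1) breaks there, so the sequence need not be a multifan and you cannot literally ``contradict (a)''. What does survive is that the fan shift from $s_i$ down to $s_1$ uses only (F1) at indices $\le i$, none of which equals $k>i$; perform that shift and color $rs_i$ with $\alpha$ directly to reach the contradiction with $\chi'(G)>\Delta$.
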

	
	\begin{defi}
		Let $G$ be a graph, $e=v_0v_1 \in E(G)$, and $\varphi \in \mathcal{C}^k(G-e)$ for some integer $k \geq 0$. A Kierstead path with respect to $e$ and $\varphi$ is a sequence $K=(v_0, v_0v_1, v_1, v_1v_2, v_2, \ldots,$ $v_{p-1},$ $v_{p-1}v_p, v_p)$ with $p \geq 1$ consisting of distinct vertices $v_0, v_1, \ldots, v_p$ and distinct edges $v_0v_1, v_1v_2, \ldots,$ $v_{p-1}v_p$ satisfying the following condition:
		\begin{itemize}
			\item[(K1)] For every edge $v_iv_{i+1}$ with $i \in [1, p-1]$, there exists $j \in [0, i-1]$ such that $\varphi(v_iv_{i+1})$ $\in \overline{\varphi}(v_j)$.
		\end{itemize}
	\end{defi}
	
	Clearly a Kierstead path with at most 3 vertices is a multifan. We consider Kierstead paths with 4 vertices. The result below was proved in Theorem 3.3 from \cite{GECB}.
	
	\begin{lem}[\cite{GECB}]\label{Kierstead path}
		Let $G$ be a class 2 graph, $e=v_0v_1 \in E(G)$ be a critical edge, and $\varphi \in \mathcal{C}^{\Delta}(G-e)$. If $K=(v_0, v_0v_1, v_1, v_1v_2, v_2, v_2v_3, v_3)$ is a Kierstead path with respect to $e$ and $\varphi$, then the following statements hold.
		\begin{itemize}
			\item[(a)] If $\min\{d_G(v_1), d_G(v_2)\} < \Delta$, then $V(K)$ is $\varphi$-elementary.
			\item[(b)] $|\overline{\varphi}(v_3) \cap (\overline{\varphi}(v_0) \cup \overline{\varphi}(v_1))| \leq 1$.
		\end{itemize}
	\end{lem}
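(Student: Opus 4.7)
The plan is to first localize an elementary substructure on $\{v_0,v_1,v_2\}$ via a multifan argument, and then use Kempe swaps together with Lemma \ref{multifan}(b) to extend control to $v_3$. By (K1), $\varphi(v_1v_2)\in\overline{\varphi}(v_0)$, so the sequence $(v_1,v_1v_0,v_0,v_1v_2,v_2)$ is a multifan centered at $v_1$ with feet $v_0,v_2$; Lemma \ref{multifan}(a) then gives that $\{v_0,v_1,v_2\}$ is $\varphi$-elementary, and Lemma \ref{multifan}(b) provides chain-linkage between $v_1$ and each of $v_0,v_2$. Write $\beta:=\varphi(v_1v_2)\in\overline{\varphi}(v_0)$ and $\gamma:=\varphi(v_2v_3)$; by (K1), $\gamma\in\overline{\varphi}(v_0)\cup\overline{\varphi}(v_1)$. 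It remains to bound the intersection of $\overline{\varphi}(v_3)$ with $\overline{\varphi}(\{v_0,v_1,v_2\})$.

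For part (a), I would assume $d_G(v_2)<\Delta$ (the case $d_G(v_1)<\Delta$ is analogous), so $|\overline{\varphi}(v_2)|\geq 2$. Suppose for contradiction some color $\alpha\in\overline{\varphi}(v_3)$ lies in $\overline{\varphi}(v_0)\cup\overline{\varphi}(v_1)\cup\overline{\varphi}(v_2)$. The base case is $\alpha\in\overline{\varphi}(v_2)$: recoloring $v_2v_3$ by $\alpha$ frees $\gamma$ at both $v_2$ and $v_3$; if $\gamma\in\overline{\varphi}(v_1)$, then recoloring $v_1v_2$ by $\gamma$ frees $\beta$ at $v_1$, and coloring $v_0v_1$ by $\beta$ yields a proper $\Delta$-edge-coloring of $G$, contradicting the criticality of $v_0v_1$. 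When $\gamma\in\overline{\varphi}(v_0)\setminus\overline{\varphi}(v_1)$, a preliminary $(\gamma,\mu)$-swap at $v_1$ with $\mu\in\overline{\varphi}(v_1)$ relocates $\gamma$ into the missing set of $v_1$; Lemma \ref{multifan}(b) ensures $v_0,v_1$ are $(\gamma,\mu)$-linked, so the swap terminates at $v_0$ without disturbing $\alpha$ or $\beta$, and the previous argument closes the case. The remaining possibilities $\alpha\in\overline{\varphi}(v_0)$ and $\alpha\in\overline{\varphi}(v_1)$ are reduced to the base case by a preliminary $(\alpha,\delta)$-swap at $v_2$ with $\delta\in\overline{\varphi}(v_2)\setminus\{\alpha\}$, which moves $\alpha$ into $\overline{\varphi}(v_2)$; the linkage guaranteed by Lemma \ref{multifan}(b) again confines the chain so that the Kierstead-path data survives.

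For part (b), observe first that if the hypothesis of (a) holds then (a) already gives the stronger conclusion $\overline{\varphi}(v_3)\cap(\overline{\varphi}(v_0)\cup\overline{\varphi}(v_1))=\emptyset$. I may therefore assume $d_G(v_1)=d_G(v_2)=\Delta$, whence $|\overline{\varphi}(v_1)|=1$ and $\overline{\varphi}(v_2)=\emptyset$. Suppose toward contradiction that two distinct $\alpha_1,\alpha_2\in\overline{\varphi}(v_3)\cap(\overline{\varphi}(v_0)\cup\overline{\varphi}(v_1))$ exist. Since $|\overline{\varphi}(v_1)|=1$ and $\overline{\varphi}(v_0)\cap\overline{\varphi}(v_1)=\emptyset$, at most one of $\alpha_1,\alpha_2$ can lie in $\overline{\varphi}(v_1)$, so we may relabel to assume $\alpha_1\in\overline{\varphi}(v_0)$. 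I would then split on whether $\alpha_2\in\overline{\varphi}(v_0)$ or $\alpha_2\in\overline{\varphi}(v_1)$ and, in each subcase, construct a short sequence of Kempe changes driven by the $(\alpha_i,\beta)$- and $(\alpha_i,\gamma)$-chains, with endpoints controlled via Lemma \ref{multifan}(b), that terminates in a valid $\Delta$-coloring of $G$, contradicting $\chi'(G)=\Delta+1$.

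The main obstacle is verifying that each intermediate swap preserves enough of the original data: that $\{v_0,v_1,v_2\}$ stays elementary under the new coloring, that the edges $v_1v_2$ and $v_2v_3$ are not inadvertently recolored in a way that destroys the Kierstead-path condition (K1), and that the missing colors used in the final step have not been moved by an earlier swap. The most delicate subcases are the one in (a) with $\gamma\in\overline{\varphi}(v_0)\setminus\overline{\varphi}(v_1)$, where an auxiliary chain must relocate $\gamma$ to $v_1$ without touching the colors used later, and the subcase in (b) with $\alpha_1\in\overline{\varphi}(v_0)$ and $\alpha_2\in\overline{\varphi}(v_1)$, where two linked chains have to be analyzed in sequence and the interaction with the edge $v_2v_3$ carefully controlled.
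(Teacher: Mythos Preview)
The paper does not prove this lemma; it is quoted verbatim from Theorem~3.3 of \cite{GECB} (the sentence preceding the statement says so explicitly), so there is no in-paper argument to compare your proposal against.

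On the merits of your outline: the overall plan---pass to the multifan $(v_1,v_1v_0,v_0,v_1v_2,v_2)$ to make $\{v_0,v_1,v_2\}$ elementary, then reach $v_3$ by Kempe swaps governed by Lemma~\ref{multifan}(b)---is the standard one and is sound in spirit. Two points need more care before it becomes a proof.

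First, your parenthetical ``the case $d_G(v_1)<\Delta$ is analogous'' is not justified. Your reduction step for (a) relies on choosing $\delta\in\overline{\varphi}(v_2)$, but if $d_G(v_1)<\Delta$ while $d_G(v_2)=\Delta$ then $\overline{\varphi}(v_2)=\emptyset$ and there is no such $\delta$. The roles of $v_1$ and $v_2$ are genuinely asymmetric here (the uncolored edge is incident to $v_1$, not $v_2$), and this subcase needs its own treatment exploiting $|\overline{\varphi}(v_1)|\ge 2$ instead. Also, already when $d_G(v_2)<\Delta$ you only get $|\overline{\varphi}(v_2)|\ge 1$, not $\ge 2$; this is enough for your reduction since $\alpha\notin\overline{\varphi}(v_2)$ by elementarity, but the text should be corrected.

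Second, when $\alpha\in\overline{\varphi}(v_0)$ and $\delta\in\overline{\varphi}(v_2)$, Lemma~\ref{multifan}(b) as stated in the paper controls only $(\mu,\cdot)$-chains with $\mu\in\overline{\varphi}(v_1)$; it says nothing directly about an $(\alpha,\delta)$-chain whose two defining colours lie in $\overline{\varphi}(v_0)$ and $\overline{\varphi}(v_2)$. So your appeal to that lemma to ``confine the chain'' is not supported as written. A clean repair is to first perform a $(\mu,\alpha)$-swap at $v_3$ with $\mu\in\overline{\varphi}(v_1)$: Lemma~\ref{multifan}(b) forces $v_0$ and $v_1$ to be $(\mu,\alpha)$-linked, so $v_3$ lies on a different $(\mu,\alpha)$-chain and the swap places $\mu\in\overline{\varphi}(v_3)$ while leaving the Kierstead-path edges untouched; you are then in the case $\alpha\in\overline{\varphi}(v_1)$, which your argument does handle correctly via Lemma~\ref{multifan}(b).
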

	
	Let $B$ be a short broom with respect to $xy$ and $\varphi$, and let $v_{\ell_1}, \ldots, v_{\ell_k}$ be a subsequence of $v_1, \ldots, v_p$. This subsequence is called an \textit{$\alpha_0$-sequence} with respect to $\varphi$ and $B $ if it satisfies the following conditions:
	
	\[
	\varphi(zv_{\ell_1}) = \alpha_{0} \in \overline{\varphi}(x) \cup \overline{\varphi}(y),\ \varphi(zv_{\ell_2}) = \alpha \in \overline{\varphi}(v_{\ell_1}),\ \varphi(zv_{\ell_i}) \in \overline{\varphi}(v_{\ell_{i-1}})\ \text{for } i \in [2, k].
	\]
	A vertex belonging to an $\alpha_0$-sequence is referred to as an \textit{$\alpha_0$-inducing vertex} with respect to $\varphi$ and $B$, and a missing color at such a vertex is called an \textit{$\alpha_0$-inducing color} (simply $\alpha_0$-inducing). For convenience, the color $\alpha_0$ itself is also considered an $\alpha_0$-inducing color (simply $\alpha_0$-inducing).
	
	%	Simple brooms were introduced by Chen, Chen, and Zhao \cite{ChenHamiltonicity2017}. They proved that if $G$ is a graph with maximum degree $\Delta$ and $\chi'(G) = \Delta + 1$, $e \in E_G(x,y)$ is a critical edge and $\varphi \in \mathcal{C}^\Delta(G - e)$ is a coloring, the n the vertex set of a simple $\varphi$-broom at $(x,y)$, say $B = (x,e,y,e',z,e_1,v_1,\ldots,e_p,v_p)$, is $\varphi$-elementary, provided that (1) $d_G(z) < \Delta$ or (2) $d_G(y) < \Delta$ and $|\overline{\varphi}(x,y)| \geq 4$.
	%	
	%	\begin{lem}[\cite{ChenHamiltonicity2017}]
		%		Let $B = (x,e,y,e',z,e_1,v_1,\ldots,e_p,v_p)$ be a short broom with respect to $xy$ and $\varphi$. If $|\overline{\varphi}(x) \cup \overline{\varphi}(y)| \geq 4$ and $\min\{d(y), d(z)\} < \Delta$, then $V(B)$ is elementary under $\varphi$.
		%	\end{lem}
	%	
	Recently, Cao, Chen, Jing, Stiebitz and Toft \cite{Cao2019survey} proved the following result.
	
	\begin{lem}[\cite{Cao2019survey}]\label{lem-short broom}
		Let $G$ be a graph with maximum degree $\Delta$ and $\chi'(G) = \Delta + 1$, let $e \in E_G(x,y)$ be a critical edge, let $\varphi \in \mathcal{C}^\Delta(G - e)$ be a coloring, and let $B = (x, e,$ $y, e', z,$ $e_1,$ $v_1,\ldots, e_p, v_p)$ be a $\varphi$-broom at $(x,y)$ such that $\min\{d_G(y),d_G(z)\} < \Delta$. Then $V(B)$ is $\varphi$-elementary. Furthermore, if the $\varphi$-broom $B$ is a maximal $\varphi$-broom at $(x,y)$ such that $\min\{d(y),d(z)\} < \Delta$, then
		\begin{equation*}
			\sum_{v \in V(B) \backslash \{z\}} \left( d_G(v) + \mu_G(z,v) - \Delta \right) \geq 2.
		\end{equation*}
	\end{lem}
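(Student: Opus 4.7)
The plan is to establish Part 1 (elementarity of $V(B)$) by strong induction on $p$, and then to deduce Part 2 (the degree inequality) as a counting consequence of Part 1 together with the maximality hypothesis.

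For Part 1, the base case $p=1$ is immediate: since $\varphi(zv_1)\in\varphi(z)$, the broom condition forces $\varphi(zv_1)\in\overline{\varphi}(x)\cup\overline{\varphi}(y)$, so $(x,xy,y,yz,z,zv_1,v_1)$ is a Kierstead path with respect to $xy$ and $\varphi$; Lemma~\ref{Kierstead path}(a), combined with $\min\{d_G(y),d_G(z)\}<\Delta$, yields that $\{x,y,z,v_1\}$ is $\varphi$-elementary. For the inductive step with $p\geq 2$, the prefix $B'=(x,xy,y,yz,z,zv_1,v_1,\ldots,zv_{p-1},v_{p-1})$ is itself a short broom, so $V(B')$ is $\varphi$-elementary by induction. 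Should $V(B)$ fail elementarity, some color $\alpha$ lies in $\overline{\varphi}(v_p)\cap\overline{\varphi}(u)$ for a vertex $u\in V(B')$; writing $\beta=\varphi(zv_p)$, the elementarity of $V(B')$ pins down a unique $w\in V(B')\setminus\{z\}$ with $\beta\in\overline{\varphi}(w)$. Tracing the inducing-color chain from $v_p$ backward through the fan at $z$ produces an $\alpha_0$-sequence $v_{\ell_1},\ldots,v_{\ell_k}=v_p$ linking $\beta$ to a color $\alpha_0\in\overline{\varphi}(x)\cup\overline{\varphi}(y)$.

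I then case on the location of $u$, in each case performing Kempe-chain surgery seeded at $v_p$. When $u\in\{x,y\}$, the sequence $(x,xy,y,yz,z,zv_p,v_p)$ is already a Kierstead path; Lemma~\ref{Kierstead path}(a) together with $\min\{d_G(y),d_G(z)\}<\Delta$ directly contradicts $\alpha\in\overline{\varphi}(v_p)\cap\overline{\varphi}(u)$. When $u=z$, starting from $\alpha_0\in\overline{\varphi}(x)\cup\overline{\varphi}(y)$ I perform a single $(\alpha_0,\alpha)$-swap at the corresponding endpoint of $xy$; because $\alpha\in\overline{\varphi}(z)$, the resulting chain cannot reach $z$ in a way that damages the broom, and one extracts a $\Delta$-coloring of $G-xy$ missing a common color at $x$ and $y$, contradicting criticality. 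The heart of the argument is the case $u=v_j$ for some $1\leq j\leq p-1$: I swap the inducing colors along the $\alpha_0$-sequence $v_{\ell_1},\ldots,v_{\ell_k}$ at $z$ in order, obtaining a new coloring $\varphi^*\in\mathcal{C}^{\Delta}(G-xy)$ in which either a strictly shorter short broom at $(x,y)$ has a non-$\varphi^*$-elementary vertex set (contradicting induction) or $\overline{\varphi^*}(x)\cap\overline{\varphi^*}(y)\neq\emptyset$ (contradicting criticality of $xy$). The degree hypothesis $\min\{d_G(y),d_G(z)\}<\Delta$ supplies the auxiliary missing color at $y$ or $z$ that seeds these swaps, and verifying that this color is disjoint from the colors appearing in the $\alpha_0$-sequence is the most delicate bookkeeping.

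For Part 2, assume $B$ is a maximal $\varphi$-broom subject to $\min\{d_G(y),d_G(z)\}<\Delta$. By Part 1, $V(B)$ is $\varphi$-elementary, hence $\overline{\varphi}(V(B)\setminus\{z\})\cap\overline{\varphi}(z)=\emptyset$, so every $\gamma\in\overline{\varphi}(V(B)\setminus\{z\})$ is present at $z$. For any such $\gamma$, the edge $zw$ with $\varphi(zw)=\gamma$ must satisfy $w\in V(B)$: otherwise $v_{p+1}=w$ together with $e_{p+1}=zw$ would extend $B$ while preserving the degree hypothesis (which involves no new vertex), contradicting maximality. Since at most $\mu_G(z,v)$ colors arise from edges joining $z$ to a fixed $v\in V(B)\setminus\{z\}$, we obtain
\[
|\overline{\varphi}(V(B)\setminus\{z\})|\leq\sum_{v\in V(B)\setminus\{z\}}\mu_G(z,v).
\]
On the other hand, elementarity furnishes $|\overline{\varphi}(V(B)\setminus\{z\})|=(p+2)\Delta+2-\sum_{v\in V(B)\setminus\{z\}}d_G(v)$, and rearranging gives the advertised inequality $\sum_{v\in V(B)\setminus\{z\}}(d_G(v)+\mu_G(z,v)-\Delta)\geq 2$. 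The principal obstacle throughout is the orchestration of Kempe surgery in the case $u=v_j$ of Part 1; once that is settled, Part 2 is pure bookkeeping.
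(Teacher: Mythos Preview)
The paper does not prove this lemma; it is quoted from \cite{Cao2019survey} without argument, so there is no in-paper proof to benchmark against. Judging your proposal on its own merits, Part~2 is sound: once $V(B)$ is $\varphi$-elementary, the counting you give (each $\gamma\in\overline{\varphi}(V(B)\setminus\{z\})$ is present at $z$, and by maximality the other end of the $\gamma$-edge at $z$ lies in $V(B)$) is correct and yields the inequality.

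Part~1 has a genuine gap. In the case $u\in\{x,y\}$ you assert that $(x,xy,y,yz,z,zv_p,v_p)$ is a Kierstead path, but the broom axiom only gives $\varphi(zv_p)\in\overline{\varphi}(\{x,y,z,v_1,\ldots,v_{p-1}\})$; when $\varphi(zv_p)\in\overline{\varphi}(v_j)$ for some $j<p$ (equivalently, when your $\alpha_0$-sequence has length $k>1$) this is \emph{not} a Kierstead path and Lemma~\ref{Kierstead path}(a) is unavailable. You set up the $\alpha_0$-sequence precisely to handle this, but then drop it in the one case where it is essential. The cases $u=z$ and $u=v_j$ are also underspecified: ``swap the inducing colors along the $\alpha_0$-sequence at $z$'' is ambiguous, since Kempe swaps are anchored at vertices where a color is \emph{missing}, whereas the inducing colors are all \emph{present} at $z$; if you mean a fan \emph{shift} (recolor $zv_{\ell_i}$ by the next inducing color), that requires the new color to be missing at $z$ too, which is not guaranteed. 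The actual argument in the cited source tracks carefully how the broom deforms under specific Kempe changes, and the hypothesis $\min\{d_G(y),d_G(z)\}<\Delta$ enters more substantively than as a mere seed color. Your outline does not yet contain the mechanism that closes these cases.
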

	
	\subsection{Proof of Theorem \ref{ne-broom}}
	
%	It is known that for a Kierstead path with 4 vertices where the two middle vertices have degree $\Delta$, its vertex set is not necessarily elementary. However, it does satisfy the property that $|\overline{\varphi}(v_3) \cap (\overline{\varphi}(v_0) \cup \overline{\varphi}(v_1))| \leq 1$. In this paper, we show that the short broom can be generalized in similar fashion. For a color $\alpha \in [1, \Delta]$, define $m_{\varphi, B}(\alpha)$ as $|\{v \in V(B) \mid \alpha \in \overline{\varphi}(v)\}| - 1$ if $\alpha \in \overline{\varphi}(V(B))$, and as 0 otherwise.
	
	\broom*
%	\begin{thm} [Theorem \ref{ne-broom}]
%		Let $G$ be a $\Delta$-critical graph with an edge $xy\in E(G)$, and let $\varphi \in \mathcal{C}^{\Delta}(G-xy)$. Suppose $B = (x, xy,$ $y, yz, z, zv_1,$ $v_1,\ldots, zv_p, v_p)$ is a  short broom with respect to $xy$ and $\varphi$. Then 
%		\begin{equation*}
%			\sum_{\alpha \in [1, \Delta]} m_{\varphi, B}(\alpha) \leq 1.
%		\end{equation*}
%	\end{thm}
%	
	\begin{proof}
		If $\min\{d_G(y),d_G(z)\} < \Delta$, then the result follows directly from Lemma \ref{lem-short broom}. Otherwise, we have $d_G(y) = d_G(z) = \Delta$. Suppose to the contrary that $\sum_{\alpha \in [1, \Delta]} m_{\varphi, B}(\alpha)$ $\geq 2$. Let $M_{\varphi, B} = \{\alpha \in [1, \Delta] \mid m_{\varphi, B}(\alpha) \geq 1\}$, writing simply $M$ when $\varphi$ and $B$ are understood. Then, $\sum_{\alpha \in M} m_{\varphi, B}(\alpha) = \sum_{\alpha \in [1, \Delta]} m_{\varphi, B}(\alpha) \geq 2$. 
		
		Without loss of generality, we may assume that $\varphi(yz) = \zeta \in \overline{\varphi}(x)$ and $\overline{\varphi}(y) = 1$. Then $(y, yx, x, yz, z)$ forms a multifan; so $\{x, y, z\}$ is $\varphi$-elementary, $\overline{\varphi}(x) \cup \overline{\varphi}(y)$ and $\varphi(x) \cap \varphi(y)$ form a partition of $[1, \Delta]$ by Lemma \ref{multifan}.
		Thus, by the definition of $m_{\varphi, B}(\alpha)$, the assumption $\sum_{\alpha \in M} m_{\varphi, B}(\alpha) \geq 2$ and the fact that $\{x, y, z\}$ is $\varphi$-elementary, we conclude that there exist colors $\alpha, \beta \in M$ and vertices $u, v \in V(B) \backslash \{x, y, z\}$ such that $\beta \in \overline{\varphi}(u)$ and $\alpha \in \overline{\varphi}(v)$. Note that $\alpha$ and $\beta$ are not necessarily distinct, and $u$ and $v$ are not necessarily distinct. However, the situation where $\alpha = \beta$ and $u = v$ simultaneously is impossible, as it would imply that the same color is missing twice at the same vertex.  
		
		We may assume that $\alpha$ is $\alpha_{0}$-inducing, $\beta$ is $\beta_{0}$-inducing, $\alpha_0 = \varphi(zv_1) \in \overline{\varphi}(x) \cup \overline{\varphi}(y)$, $\beta_0 = \varphi(zu_1) \in \overline{\varphi}(x) \cup \overline{\varphi}(y)$, $\alpha_{1} \in \overline{\varphi}(v_1)$ and $\beta_{1} \in \overline{\varphi}(u_1)$, where $v_1, u_1\in V(B)$.
		Without loss of generality, assume $v = v_t$, $u = u_s$, with integers $t, s \geq 1$, $\varphi(zv_i) = \alpha_{i-1}$, $\alpha_{i} \in \overline{\varphi}(v_i)$ for $i=1, 2, \ldots, t$, $\varphi(zu_j) = \beta_{j-1}$, $\beta_{j} \in \overline{\varphi}(u_j)$ for $j=1, 2, \ldots, s$,  $\alpha \in \overline{\varphi}(v_t) \cap M$ and $\beta \in \overline{\varphi}(u_s) \cap M$. Note that these two sequences may share common vertices even when $u \neq v$. When $u = v$, these two sequences are identical, meaning that $s = t$ and $u_i = v_i$ for all $i = 1, \ldots, t$. It is easy to check that there is a short broom $B'$ with $V(B') =$ $\{x, y, z, v_1, v_2, \ldots, v_t\} \cup \{u_1, u_2, \ldots, u_s\}$ and $E(B') = \{xy, yz , zv_1,\ldots, zv_t\}$ $\cup$ $\{zu_1, zu_2,\ldots, zu_s\}$.
		In the following proof, we choose $v=v_t$ and $u=u_s$ satisfying the following conditions:
		\newcounter{listcounter}
		\begin{itemize}
			\item[(a)] $\overline{\varphi}(v) \cap M \neq \emptyset$, $\overline{\varphi}(u) \cap M \neq \emptyset$;
			\item[(b)]\refstepcounter{listcounter}\label{item-b} subject to (a), $t$ and $s$ are both as small as possible.\ \ \  $(\ast)$
		\end{itemize}	
		%		 the colors $\alpha_{0}, \alpha_{1}\ldots, \alpha_{t-1}$ (respectively, $\beta_{0},\beta_1, \ldots, \beta_{s-1}$) are pairwise distinct and 
		By the minimality of $t$ and $s$, $\alpha \notin \{\alpha_{1}, \alpha_{2}\ldots, \alpha_{t-1}\}$ (respectively, $\beta \notin  \{\beta_{1},\beta_2, \ldots, \beta_{s-1}\}$). Moreover, if $v_i = u_j$ for some $i$ and $j$, then $i = j$; otherwise, it would contradict the minimality of $t$ and $s$.
		
		\begin{claim}\label{a-1}
			The following statements hold:
			\begin{itemize}
				\item[(a)] If $\alpha \in \overline{\varphi}(v_t) \cap M \cap (\overline{\varphi}(x) \cup \overline{\varphi}(y))$, then $\alpha \neq \alpha_0$ and $zv_1 \notin P_x(\alpha, \alpha_0)$.
				\item[(b)] If $\beta \in \overline{\varphi}(u_s) \cap M \cap (\overline{\varphi}(x) \cup \overline{\varphi}(y))$, then $\beta \neq \beta_0$ and $zu_1 \notin P_x(\beta, \beta_0)$.
			\end{itemize}
		\end{claim}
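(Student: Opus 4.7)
Both conclusions of Claim~\ref{a-1}(a) will be proved by contradiction; part (b) is analogous by symmetry. The central leverage is the minimality condition $(\ast)$: for $i \in [1, t-1]$, the set $\overline{\varphi}(v_i) \cap M$ is empty (otherwise one could replace $v_t$ by $v_i$, contradicting $(\ast)$). Since any color missing at $v_i$ and also at $x$ or $y$ would be doubly missing in $V(B)$ and hence lie in $M$, this yields
\[
\overline{\varphi}(v_i) \cap \bigl(\overline{\varphi}(x) \cup \overline{\varphi}(y)\bigr) = \emptyset \quad \text{for every } i \in [1, t-1].
\]

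To prove $\alpha \neq \alpha_0$: suppose $\alpha = \alpha_0$. Since $\alpha_0 = \varphi(zv_1) \in \varphi(v_1)$ while $\alpha \in \overline{\varphi}(v_t)$, necessarily $t \geq 2$. The plan is to perform the Kempe swap on the $(\alpha_{t-1}, \alpha_0)$-chain through $v_t$: this chain begins at $v_t$ along the edge $zv_t$ (of color $\alpha_{t-1}$), reaches $z$, then continues along the unique $\alpha_0$-edge $zv_1$ at $z$ to $v_1$, and possibly extends further depending on whether $\alpha_{t-1} \in \overline{\varphi}(v_1)$. In the resulting coloring $\varphi^{*}$, we have $\varphi^{*}(zv_t) = \alpha_0 \in \overline{\varphi^{*}}(x) \cup \overline{\varphi^{*}}(y)$, so $(x, xy, y, yz, z, zv_t, v_t)$ becomes a Kierstead path under $\varphi^{*}$. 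Applying Lemma~\ref{Kierstead path}(b) forces $|\overline{\varphi^{*}}(v_t) \cap (\overline{\varphi^{*}}(x) \cup \overline{\varphi^{*}}(y))| \leq 1$. The contradiction is obtained by exhibiting two distinct colors in this intersection: the color $\alpha_{t-1}$ becomes newly missing at $v_t$ after the swap, and a second color is inherited either from $\overline{\varphi}(v_t) \cap (\overline{\varphi}(x) \cup \overline{\varphi}(y))$ or produced by a secondary Kempe swap, depending on the termination of the $(\alpha_{t-1}, \alpha_0)$-chain and on whether $\alpha_{t-1}$ lies in $\overline{\varphi}(x) \cup \overline{\varphi}(y)$.

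To prove $zv_1 \notin P_x(\alpha, \alpha_0)$: given $\alpha \neq \alpha_0$, suppose $zv_1 \in P_x(\alpha, \alpha_0)$. If $\alpha \in \overline{\varphi}(x)$, then $P_x(\alpha, \alpha_0) = \{x\}$ contains no edge, so we must have $\alpha \in \overline{\varphi}(y) = \{1\}$, i.e., $\alpha = 1$, and consequently $\alpha_0 \in \overline{\varphi}(x)$. By Lemma~\ref{multifan}(b) applied to the multifan $(y, yx, x, yz, z)$, $x$ and $y$ are $(1, \alpha_0)$-linked, so $P_x(1, \alpha_0)$ terminates at $y$. I perform the $(1, \alpha_0)$-swap on $P_x(1, \alpha_0)$; the resulting coloring $\varphi'$ preserves $B$ as a short broom with respect to $xy$, but with new parameters $\alpha'_0 := \varphi'(zv_1) = 1 \in \overline{\varphi'}(x)$ and $\alpha'_i = \alpha_i$ for $i \geq 1$. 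Since $v_t$ is not on $P_x(1, \alpha_0)$ (an internal vertex would need a $1$-edge at $v_t$, which does not exist because $v_t$ misses $1$, and $v_t \neq x, y$), we still have $1 = \alpha'_0 \in \overline{\varphi'}(v_t) \cap M' \cap (\overline{\varphi'}(x) \cup \overline{\varphi'}(y))$. This is exactly the forbidden configuration $\alpha' = \alpha'_0$ handled by the first part under $\varphi'$, yielding a contradiction.

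The principal obstacle is the degree condition $d_G(z) = \Delta$, which forces $\overline{\varphi}(z) = \emptyset$ and thereby invalidates the naive shift $zv_i : \alpha_{i-1} \to \alpha_i$ along the $\alpha_0$-sequence. The workaround uses carefully chosen Kempe swaps to emulate the effect of a shift while producing controlled modifications of the missing-color sets; tracking these swaps through the case analysis on where the $(\alpha_{t-1}, \alpha_0)$-chain terminates and on whether $\alpha, \alpha_0$ lie in $\overline{\varphi}(x)$ or $\overline{\varphi}(y)$ forms the technical heart of the argument.
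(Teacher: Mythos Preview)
Your proposal contains genuine gaps in both parts.

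\textbf{First part ($\alpha \neq \alpha_0$).} The plan to swap the $(\alpha_{t-1},\alpha_0)$-chain through $v_t$ and then invoke Lemma~\ref{Kierstead path}(b) does not close. By minimality $(\ast)$ one has $\alpha_{t-1}\notin \overline{\varphi}(x)\cup\overline{\varphi}(y)$, so after the swap $\alpha_{t-1}\in\overline{\varphi^*}(v_t)$ lands in $\overline{\varphi^*}(x)\cup\overline{\varphi^*}(y)$ only if the chain terminates at $x$ or $y$; but in exactly that case $\alpha_0$ leaves $\overline{\varphi^*}(x)\cup\overline{\varphi^*}(y)$, so $\varphi^*(zv_t)=\alpha_0$ no longer certifies a Kierstead path. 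Conversely, if the chain avoids $x$ and $y$, then $\alpha_{t-1}\notin\overline{\varphi^*}(x)\cup\overline{\varphi^*}(y)$ and you have removed $\alpha_0$ from $\overline{\varphi^*}(v_t)$ without producing a replacement, so you cannot exhibit even one, let alone two, colors in the required intersection. The promised ``secondary Kempe swap'' case analysis is precisely the missing content, and there is no evident way to fill it. The paper avoids this entirely: it first reduces to $\alpha_0=1$ (via a $(1,\alpha_0)$-swap at $z$ and $v_t$), then recolors $xy$ by $\zeta$, $yz$ by $1$, and uncolors $zv_1$, turning $(z,zv_1,v_1,\ldots,zv_t,v_t)$ into a multifan in which $1\in\overline{\varphi_1}(v_1)\cap\overline{\varphi_1}(v_t)$, contradicting Lemma~\ref{multifan}(a).

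\textbf{Second part ($zv_1\notin P_x(\alpha,\alpha_0)$).} Your deduction ``if $\alpha\in\overline{\varphi}(x)$ then $P_x(\alpha,\alpha_0)=\{x\}$'' is false: it requires $\alpha_0\in\overline{\varphi}(x)$ as well. The case $\alpha\in\overline{\varphi}(x)$, $\alpha_0=1\in\overline{\varphi}(y)$ is not covered (it is symmetric to your handled case, but you must say so). More importantly, swapping on the full path $P_x(1,\alpha_0)$ alters $\overline{\varphi}(x)$ and $\overline{\varphi}(y)$, so you would need to re-verify that the broom, the set $M$, and the minimality $(\ast)$ all survive before reinvoking the first part. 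The paper's argument is simpler and uniform: since $\alpha\neq\alpha_0$ and both lie in $\overline{\varphi}(x)\cup\overline{\varphi}(y)$, exactly one is in $\overline{\varphi}(x)$, so $x$ and $y$ are $(\alpha,\alpha_0)$-linked; a single $(\alpha,\alpha_0)$-swap at $v_t$ (which, because $zv_1\in P_x(\alpha,\alpha_0)$, leaves $zv_1$ and the edges at $x,y$ untouched) puts $\alpha_0$ into $\overline{\varphi}(v_t)$, reducing to the already-forbidden case $\alpha=\alpha_0$.
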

		\begin{proof}
			We first prove (a). Assume that $\alpha \in \overline{\varphi}(v_t) \cap M \cap (\overline{\varphi}(x) \cup \overline{\varphi}(y))$. 
			To show $\alpha \neq \alpha_0$, suppose to the contrary that $\alpha = \alpha_0$. Then $t\geq 2$, and we can conclude that $\alpha = \alpha_0 \in \overline{\varphi}(x)$. Otherwise, $\alpha = \alpha_0 = 1 \in \overline{\varphi}(y)$. 
			Coloring $xy$ by $\zeta$, recoloring $yz$ by 1, and uncoloring $zv_1$ result in an edge coloring in $\mathcal{C}^\Delta(G-zv_1)$, say $\varphi_1$, and $(z, zv_1, v_1, zv_2, \ldots, zv_t)$ is a multifan with respect to $\varphi_1$. However, this contradicts Lemma \ref{multifan} because $1 \in \overline{\varphi_1}(v_1) \cap \overline{\varphi_1}(v_t)$. 
			Hence,  $\alpha = \alpha_0 \in \overline{\varphi}(x)$.
			Next we have $zv_1 \notin P_x(1, \alpha_0)$, because otherwise, we could recolor $zv_t$ by $\alpha_0$, recolor $zv_{t-1}$ by $\alpha_{t-1}$,$\ldots$, recolor $zv_1$ by $\alpha_{1}$ to obtain a coloring $\varphi_2 \in \mathcal{C}^{\Delta}(G-xy)$, where $\alpha_0 \in \overline{\varphi_2}(v_1)$, contradicting the fact that $x$ and $y$ are $(1, \alpha_0)$-linked with respect to $\varphi_2$ by Lemma \ref{multifan}. 
			Thus, $zv_1 \notin P_x(1, \alpha_0)$. We can then do $(1, \alpha_0)$-swap(s) at both $z$ and $v_{t}$, which brings us back to the previous case in which $\alpha = \alpha_0 = 1$. Therefore, $\alpha \neq \alpha_0$.
			
			Now, to show $zv_1 \notin P_x(\alpha, \alpha_0)$, suppose otherwise that $zv_1 \in P_x(\alpha, \alpha_0)$. Recall that $\alpha, \alpha_0 \in \overline{\varphi}(x) \cup \overline{\varphi}(y)$. 
			Then $t \geq 2$ and there is exactly one of $\alpha$ and $\alpha_{0}$ is missed at $x$; otherwise, $zv_1 \notin P_x(\alpha, \alpha_0)$. So $x$ and $y$ is $(\alpha, \alpha_{0})$-linked by Lemma \ref{multifan}. 
			A $(\alpha, \alpha_0)$-swap at $v_t$ results in $\alpha = \alpha_{0}$ without changing the colors of edges which are incident with $x$ or $y$ (This fact will be used very often without mentioning.), contradicting the fact that $\alpha\neq \alpha_0$.
			
			The proof of (b) follows by the symmetry of $u$ and $v$, interchanging the roles of $u_1$ and $v_1$, $\beta$ and $\alpha$, $\beta_0$ and $\alpha_0$. 
		\end{proof}
		
		\begin{claim}\label{a-2}
			If $\alpha \in \overline{\varphi}(v_t) \cap M \cap (\overline{\varphi}(x) \cup \overline{\varphi}(y))$, then we may assume $\varphi(zv_1) = 1$.
		\end{claim}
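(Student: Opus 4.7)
The plan is to reduce the nontrivial case $\alpha_0 := \varphi(zv_1) \ne 1$ to the trivial case $\alpha_0 = 1$. Since $\alpha_0 \in \overline{\varphi}(x) \cup \overline{\varphi}(y)$ and $\overline{\varphi}(y) = \{1\}$, the assumption $\alpha_0 \ne 1$ forces $\alpha_0 \in \overline{\varphi}(x)$. The multifan $(y, yx, x, yz, z)$ together with Lemma \ref{multifan}(a) gives elementarity of $\{x,y,z\}$, so $1 \in \varphi(z)$; let $w$ denote the (unique) $1$-neighbor of $z$. By Lemma \ref{multifan}(b), $x$ and $y$ are $(1,\alpha_0)$-linked, and I write $Q := P_x(1,\alpha_0) = P_y(1,\alpha_0)$.

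Because $z$ has both a $1$-edge ($zw$) and an $\alpha_0$-edge ($zv_1$), the chain $P_{v_1}(1,\alpha_0)$ passes through $z$ as an interior vertex and contains both $zv_1$ and $zw$. The argument then splits according to whether $z \in Q$. If $z \notin Q$, the chain $P_{v_1}(1,\alpha_0)$ is disjoint from $Q$, and a $(1,\alpha_0)$-swap on $P_{v_1}(1,\alpha_0)$ changes $\varphi(zv_1)$ from $\alpha_0$ to $1$ while leaving the colorings at $x$ and $y$ (and hence the multifan structure) untouched; one then verifies directly that $V(B)$, the minimal witnesses $v_t$, $u_s$, and the bad colors $\alpha$, $\beta$ carry over to the new coloring $\varphi'$ since $\alpha,\beta \notin \{1,\alpha_0\}$ by Claim \ref{a-1}(a) and its symmetric counterpart. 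If instead $z \in Q$, a direct swap on $Q$ would toggle the missing colors at $x$ and $y$, and I would instead reindex the broom: the branch $zw$ with $\varphi(zw) = 1 \in \overline{\varphi}(y)$ already satisfies the first-branch condition, so $w$ can be promoted to $v_1'$ (adding $w$ to $V(B)$ if necessary, since $B$ is not assumed maximal), and the remaining branches relisted in their original order. The broom condition $\varphi(zv_i') \in \overline{\varphi}(\{x,y,z,v_1',\ldots,v_{i-1}'\})$ continues to hold because the pool of missing colors only grows, and the witnesses $v_t, u_s$ remain in the new vertex set, preserving $\sum m_{\varphi,B'}(\gamma) \geq 2$.

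The main obstacle I anticipate is verifying that the minimality condition $(\ast)$ on $t$ and $s$ is preserved after the swap or the reindexing. After a swap on $P_{v_1}(1,\alpha_0)$, the missing-color structure of certain $v_i$ with $\alpha_{i-1} \in \{1,\alpha_0\}$ may change, and after a reindexing the $\alpha_0$-inducing sequence leading to $v_t$ in the original broom may no longer be the shortest such sequence in the new broom. I plan to resolve this by rechoosing $v_t$ and $u_s$ within the new broom to again minimize the indices, then appealing to Claim \ref{a-1}(a) to guarantee the non-link property $zv_1' \notin P_x(\alpha,\alpha_0)$ still holds for the new data. Once $(\ast)$ is restored for the new broom, the reduction is complete and the hypothesis $\varphi(zv_1) = 1$ may be assumed henceforth without loss of generality.
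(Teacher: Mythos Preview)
Your case split on whether $z \in Q := P_x(1,\alpha_0)$ is natural, but the second branch does not do what you need. The quantity $\varphi(zv_1)$ in Claim~\ref{a-2} refers to the first vertex of the \emph{$\alpha_0$-inducing sequence} $v_1,\dots,v_t$ leading to the witness $v_t$; this sequence is determined by the colors $\varphi(zv_i) = \alpha_{i-1} \in \overline{\varphi}(v_{i-1})$, not by the listing order of the broom. Promoting $w$ (the $1$-neighbor of $z$) to the front of the broom produces a valid broom $B'$, but the inducing sequence for $v_t$ is still $v_1,\dots,v_t$ with $\varphi(zv_1)=\alpha_0\ne 1$. Rechoosing witnesses to restore $(\ast)$ does not help either: nothing forces the new minimal witness to have an inducing sequence starting at color~$1$. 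So the case $z\in Q$ is genuinely unresolved in your plan. (A smaller point: your assertion that $\alpha,\beta\notin\{1,\alpha_0\}$ ``by Claim~\ref{a-1}'' is also unjustified---Claim~\ref{a-1}(a) only gives $\alpha\ne\alpha_0$, and $\alpha=1$ is entirely possible.)

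The paper's proof avoids the bad case altogether by first arranging $\alpha=1$. If $\alpha\ne 1$ then $\alpha\in\overline{\varphi}(x)$, so $x$ and $y$ are $(1,\alpha)$-linked by Lemma~\ref{multifan}(b); a $(1,\alpha)$-swap at $v_t$ leaves all edges at $x,y$ unchanged and puts $1\in\overline{\varphi}(v_t)$. Now Claim~\ref{a-1}(a), applied with $\alpha=1$, gives $zv_1\notin P_x(1,\alpha_0)$, which forces $z\notin Q$ (since the $(1,\alpha_0)$-chain through $z$ contains $zv_1$). One then performs the $(1,\alpha_0)$-swap on the chain through $z$, exactly as in your first branch, to obtain $\varphi(zv_1)=1$. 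The missing idea in your attempt is this preliminary swap at $v_t$: it is what guarantees, via Claim~\ref{a-1}, that the problematic case $z\in Q$ simply cannot occur.
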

		\begin{proof}
			Assume $\alpha_0 \neq 1$; then $\alpha_0 \in \overline{\varphi}(x)$. And $\alpha \neq \alpha_0$ by Claim \ref{a-1}.
			If $\alpha \neq 1$, then $\alpha \in \overline{\varphi}(x)$, so $x$ and $y$ are $(1, \alpha)$-linked with respect to $\varphi$ by Lemma \ref{multifan}; and a $(1, \alpha)$-swap at $v_t$ results in $1 \in \overline{\varphi}(v_t)$ without changing the colors of edges which are incident with $x$ or $y$. 
			Thus, we may assume $1 \in \overline{\varphi}(v_t)$ in any case. By Claim \ref{a-1}, we have $zv_1 \notin P_x(1, \alpha_0)$.
			A $(1, \alpha_0)$-swap at $z$ then gives $\varphi(zv_1) = 1 = \overline{\varphi}(y)$, so we may assume $\varphi(zv_1) = 1$. Note that these operations may change some colors of $zu_j$ for $j=1, 2,\ldots, s$, but we are still in this case as $1, \alpha, \alpha_0 \in \overline{\varphi}(x) \cup \overline{\varphi}(y)$. 
		\end{proof}
		
		Recall that $\overline{\varphi}(x) \cup \overline{\varphi}(y)$ and $\varphi(x) \cap \varphi(y)$ form a partition of $[1, \Delta]$. We therefore consider the following cases.
		
		\textbf{Case A:} $\sum_{\alpha \in M \cap (\overline{\varphi}(x) \cup \overline{\varphi}(y))} m_{\varphi, B}(\alpha) \geq 2$.

		In this case, we utilize the sequences and short broom structure defined previously. Without loss of generality, we may assume that $\alpha \in \overline{\varphi}(v) \cap M \cap (\overline{\varphi}(x) \cup \overline{\varphi}(y))$ and  $\beta \in \overline{\varphi}(u) \cap M \cap (\overline{\varphi}(x) \cup \overline{\varphi}(y))$. By Claim \ref{a-2}, we can also assume $\varphi(zv_1) = 1$. 
		
		We now claim that $v_1 \neq u_1$. Suppose, for contradiction, that $v_1 = u_1$.
		Coloring $xy$ by $\zeta$, recoloring $yz$ by 1 and uncoloring $zv_1$ give an edge $\Delta$-coloring $\varphi_1 \in \mathcal{C}^\Delta(G-zv_1)$. And there is a multifan with center $z$ and vertex set $\{v_1, v_2, \ldots, v_t\} \cup \{u_1, u_2, \ldots, u_s\}$ under $\varphi_1$. 
		By Lemma \ref{multifan}, under $\varphi_1$, the colors $1, \zeta, \beta$ and $\alpha$ are pairwise distinct; $\alpha, \beta\in\overline{\varphi_1}(x)$; $z$ and $v_1$ are $(1, \zeta)$-linked; $z$ and $v_t$ are $(\alpha, \zeta)$-linked; and $z$ and $u_s$ are $(\beta, \zeta)$-linked. 
		Regardless of whether $u_s = v_t$ or not, doing $(\zeta, \alpha)-(1, \zeta)-(\zeta, \beta)-(\zeta, \alpha)$-swaps at $x$ yields $P_z(1, \zeta) = (z, zy, y, yx, x)$, a contradiction. Hence, $v_1 \neq u_1$.
		
		Recall that $\varphi(zv_i) = \alpha_{i-1}$, $\alpha_{i} \in \overline{\varphi}(v_i)$ for $i=1, 2, \ldots, t$, $\varphi(zu_j) = \beta_{j-1}$, $\beta_{j} \in \overline{\varphi}(u_j)$ for $j=1, 2, \ldots, s$, $\alpha_0=1$, $\alpha \in \overline{\varphi}(v_t)\cap (\overline{\varphi}(x) \cup \overline{\varphi}(y))$ and $\beta \in \overline{\varphi}(u_s)\cap (\overline{\varphi}(x) \cup \overline{\varphi}(y))$. The sets $\{v_1, v_2,\ldots, v_t\}$ and $\{u_1, u_2,\ldots, u_s\}$ are disjoint, and colors $\alpha_{1},\ldots,\alpha_{t}$, $\beta_{1},\ldots,\beta_{s}$ are pairwise distinct; otherwise, we return to the previous case in which $v_1 = u_1$ by the minimality of $t$ and $s$. We now consider the following subcases. 
		
		\textbf{Subcase A.1:} $\alpha = \beta$.
		
		We first claim that $yz \in P_{v_t}(\beta, \zeta)$ and $P_{v_t}(\beta, \zeta)$ meets $z$ before $y$  under $\varphi$. Coloring $xy$ by $\zeta$, recoloring $yz$ by 1, and uncoloring $zv_1$ give an edge $\Delta$-coloring $\varphi_1 \in \mathcal{C}^\Delta(G-zv_1)$, and form $(z, zv_1, v_1, zv_2, v_2,\ldots, zv_t, v_t)$ a multifan under $\varphi_1$; so $1 \notin \{\beta, \beta_0\}$, and $z$ is $(\beta, \zeta)$-linked with $v_t$ under $\varphi_1$ by Lemma \ref{multifan}. Thus, $yz \in P_{v_t}(\beta, \zeta)$ and $P_{v_t}(\beta, \zeta)$ meets $z$ before $y$  under $\varphi$.
		
		Denote by $P_z^*(1, \beta_0)$ the maximal subpath of the chain $P_z(1, \beta_0)$ that starts at $z$ along $zv_1$ and does not include the edge $zu_1$. If $P_z^*(1, \beta_0)$ does not end at $y$, a sequence of Kempe changes (similar to the proof of Lemma 5 in \cite{CCSVS2022}) yields an edge $\Delta$-coloring of $G$, a contradiction. 
		The Kempe changes are: 
		\[
		\begin{bmatrix}
			zu_s & zu_{s-1} &\ldots & zu_{1} & P_{[v_t, z]}(\beta, \zeta) & yz & P_z^*(1, \beta_0) & xy \\
			\beta_{s-1} \to \beta & \beta_{s-2} \to \beta_{s-1} &\ldots & \beta_{0} \to \beta_1 & \beta / \zeta & \zeta \to 1 & 1 / \beta_0 & \zeta
		\end{bmatrix}.
		\]
		These cases therefore lead to a contradiction, and we may assume henceforth that $P_z^*(1, \beta_0)$ ends at $y$.
		
		Recall that $\beta_0, \beta$, $\zeta \in \overline{\varphi}(x)$ and $\overline{\varphi}(y) = 1$. Then, uncolor $zv_1$, do a $(1, \beta_0)$-swap on $P_{[v_1, y]}(1, \beta_0)$ and color $xy$ by $\beta_0$. It is easy to check that the resulting coloring $\varphi_1$ is an edge $\Delta$-coloring of $G-zv_1$. 
		Now, observe that $\beta_0 \in \overline{\varphi_1}(v_1)$ and $\varphi_1(zu_1) = \beta_0$, which implies that $(z, zv_1, v_1, zv_2, v_2,\ldots, v_t,$ $zu_1, u_1,\ldots,$ $zu_s, u_s)$ is a multifan. However, we also have $\beta \in \overline{\varphi_1}(u_s) \cap \overline{\varphi_1}(v_t)$, contradicting Lemma \ref{multifan} $(a)$.
		
		\textbf{Subcase A.2:} $\alpha \neq \beta$.
		
		In this case, by Claim \ref{a-1}, we have $\alpha \neq 1$ and $\beta \neq \beta_0$. If $\alpha = \zeta$, a $(1, \zeta)$-swap at $v_t$ together with Claim \ref{a-1} give a contradiction. Thus, $\alpha \neq \zeta$.
		We next claim that $\alpha \neq \beta_0$. Otherwise, we can get a contradiction by the same operations as $v_1 = u_1$ with $\beta_0$ playing the role of $\beta$ and $\beta$ playing the role of $\alpha$. Thus, $\alpha \notin \{1, \zeta, \beta, \beta_0\}$ and $\beta \neq \beta_0$. We then consider the following subcases based on whether $\beta \in \{1, \zeta\}$. 
		
		Suppose $\beta \in \{1, \zeta\}$. Note that the case $\beta = \zeta$ can be reduced to the case $\beta = 1$ by doing a $(1, \zeta)$-swap at $u_s$. Hence, we only consider $\beta = 1$. 
		By Claim \ref{a-1}, we have $zu_1 \notin P_x(1, \beta_0)$.
		We then do $(1, \beta_0)$-swap(s) at both $z$ and $u_s$, which brings us back to the previous cases in which $\alpha = \beta_0$ by the symmetry of $u$ and $v$.
		
		Suppose $1, \alpha, \beta, \zeta$ and $\beta_0$ are pairwise distinct. 
		By Claim \ref{a-1}, we have $zv_1 \notin P_x(1, \alpha)$. 
		We do the following Kempe changes in sequence: a $(1, \alpha)$-swap at $x$, a $(\zeta, \alpha)$-swap at $v_t$, and a $(\beta, \alpha)$-swap at $u_s$. Let $\varphi_2$ denote the resulting edge $\Delta$-coloring of $G-xy$.
		We then claim that $zu_1 \notin P_x(\beta_0, \alpha, \varphi_2)$. Otherwise, doing a $(\beta_0, \alpha)$-swap at $u_s$, then recoloring $zu_s$ by $\beta_0$, $zu_{s-1}$ by $\beta_{s-1}$,$\ldots$, $zu_1$ by $\beta_{1}$ would yield an edge $\Delta$-coloring $\varphi_3 \in \mathcal{C}^{\Delta}(G-xy)$ contradicting the fact that $x$ and $y$ are $(\beta_0, \alpha)$-linked under $\varphi_3$ by Lemma \ref{multifan}. Hence, $zu_1 \notin P_x(\beta_0, \alpha, \varphi_2)$.  
		Next, do a $(\beta_0, \alpha)$-swap at $x$ and a $(\zeta, \beta_0)$-swap at $v_t$.
		If $z \notin P_x(1, \beta_0)$, then by a $(1, \beta_0)$-swap at $x$, we are back to the previous case in which $\alpha = \beta_0$. Hence, $z \in P_x(1, \beta_0)$.
		Denote by $P_{z}^*(1, \beta_0)$ the maximal subpath of the chain $P_{z}(1, \beta_0)$ that starts at $z$ along $zu_1$ and does not include the edge $zv_1$.  
		Since $z \in P_x(1, \beta_0)$, $P_{z}^*(1, \beta_0)$ ends at $x$ or $y$, we do the following operations to get an edge $\Delta$-coloring of $G$:
		\[
		\begin{bmatrix}
			P_z^*(1, \beta_0) & zv_1 & zv2 &\ldots & zv_{t-1} & zv_t & xy \\
			1 / \beta_0 &1 \to \alpha_1 & \alpha_1 \to \alpha_2 &\ldots & \alpha_{t-2} \to \alpha_{t-1}& \alpha_{t-1} \to \beta_{0} & *
		\end{bmatrix},
		\]
		where the color $*$ for $xy$ is $\beta_0$ if $P_{z}^*(1, \beta_0)$ ends at $x$, and $1$ if it ends at $y$.
		This gives a contradiction to the assumption that G is $\Delta$-critical.
		
		\textbf{Case B:} $\sum_{\alpha \in M \cap (\overline{\varphi}(x) \cup \overline{\varphi}(y))} m_{\varphi, B}(\alpha) = 1$.
		
		In this case, there exists exactly one color $\alpha \in M \cap (\overline{\varphi}(x) \cup \overline{\varphi}(y))$ with $m_{\varphi,B}(\alpha) = 1$. Let $v$ be the unique vertex in $V(B) \backslash \{x, y, z\}$ such that $\alpha \in \overline{\varphi}(v) \cap M \cap (\overline{\varphi}(x) \cup \overline{\varphi}(y))$. Without loss of generality, we may take $v =v_t$ for some $t \geq 1$ with $\varphi(zv) = \alpha_{t-1}$ being $\alpha_0$-inducing. By Claim 2, we may assume $\varphi(zv_1) = 1$. 
		
		Since $\sum_{\alpha \in [1, \Delta]} m_{\varphi,B}(\alpha) \geq 2$, there exists some $\beta \in \varphi(x) \cap \varphi(y)$ and at least two distinct vertices $w, w' \in V(B) \backslash \{x, y, z\}$ such that $\beta \in \overline{\varphi}(w) \cap \overline{\varphi}(w')$. Then we may take $w' = u_s$ for some $s \geq 1$ with $\varphi(zw') = \beta_{s-1}$ being $\beta_0$-inducing. 
		We may assume $\varphi(zw) = \gamma_{l-1}$ is $\gamma_0$-inducing with integer $l \geq 1$, $\gamma_0 = \varphi(zw_1) \in \overline{\varphi}(x) \cup \overline{\varphi}(y)$, where $w_1\in V(B)$. 
		Without loss of generality, assume $w = w_l$, $\varphi(zw_i) = \gamma_{i-1}$, $\gamma_{i} \in \overline{\varphi}(w_i)$ for $i=1, 2, \ldots, l$ and $\beta \in \overline{\varphi}(w_l) \cap \varphi(x) \cap \varphi(y)$.
		Note that these three sequences may share common vertices.
		We claim that $u_1$ and $w_1$ cannot both be equal to $v_1$. Otherwise, suppose to the contrary that $u_1 = w_1 = v_1$. Coloring $xy$ by $\zeta$, recoloring $yz$ by $1$ and uncoloring $zv_1$ yield a new edge $\Delta$-coloring, say $\varphi_1$, and there is a multifan with center $z$ and vertex set $\{v_1, \ldots, v_t\} \cup \{u_1, \ldots, u_s\} \cup \{w_1, \ldots, w_{l}\}$. However, $\beta \in \overline{\varphi_1}(w) \cap \overline{\varphi_1}(w')$, which contradicts Lemma \ref{multifan} $(a)$. We then consider the following subcases.
		
		\textbf{Subcase B.1:} One of $u_1$ and $w_1$ is equal to $v_1$.
		
		Without loss of generality, assume that $w_1 = v_1$.
		We first claim that $x$ is $(\beta, \beta_{0})$-linked with $w$. Otherwise, suppose that $x$ is not $(\beta, \beta_{0})$-linked with $w$. 
		Then doing a $(\beta, \beta_{0})$-swap at $w$ gives an edge $\Delta$-coloring of $G-xy$, say $\varphi_2$. If this Kempe change does not alter the color of $zu_1$ (i.e. $\varphi_2(zu_1) = \varphi (zu_1) = \beta_{0}$), then we are back to Case A by taking $\varphi=\varphi_2$ since $\alpha \in \overline{\varphi_2}(v) \cap (\overline{\varphi_2}(x) \cup \overline{\varphi_2}(y))$, $\beta_0 \in \overline{\varphi_2}(w) \cap (\overline{\varphi_2}(x) \cup \overline{\varphi_2}(y))$ and there still is a broom with vertex set $\{x, y, z\} \cup \{v_1, \ldots, v_t\} \cup \{u_1, \ldots, u_s\} \cup \{w_1, \ldots, w_{l}\}$ under $\varphi_2$. Hence, $\varphi_2(zu_1) =\beta$. 
		However, there still is a broom with vertex set $\{x, y, z\} \cup \{v_1, \ldots, v_t\} \cup \{w_1, \ldots, w_{l}\}$ under $\varphi_2$ due to $\beta_{0} \in \overline{\varphi_2}(x)$. So we are also back to Case A by taking $\varphi=\varphi_2$. 
		Thus, $x$ is $(\beta, \beta_{0})$-linked with $w$.
		
		Then doing a $(\beta, \beta_{0})$-swap at $w'$ yields a coloring  $\varphi_3$ where $\beta_0 \in \overline{\varphi_3}(w')$. 
		If this Kempe change does not alter the color of $zu_1$, i.e., $\varphi_3(zu_1) = \varphi(zu_1) = \beta_{0}$, then $(x, xy,$ $y,$ $yz, z, zv_1, v_1,$ $\ldots, zv_t, v_t,$ $zu_1, u_1, \ldots,$ $zu_s, u_s)$ remains a short broom under $\varphi_3$. 
		If this Kempe change does alter the color of $zu_1$, i.e., $\varphi_3(zu_1) = \beta$, then there still is a broom with vertex set $\{x, y, z\} \cup \{v_1, \ldots, v_t\} \cup \{u_1, \ldots, u_s\} \cup \{w_1, \ldots, w_{l}\}$ under $\varphi_3$ because $\beta \in \overline{\varphi_3}(w)$ and $\beta_0 \in \overline{\varphi_3}(x)$. 
		Thus, we are back to Case A by taking $\varphi = \varphi_3$.
		
		\textbf{Subcase B.2:} Neither $u_1$ nor $w_1$ is equal to $v_1$.
		
		We claim that $w$ is not $(1, \beta)$-linked with $w'$. Otherwise, $w$ is $(1, \beta)$-linked with $w'$. Then doing a $(1, \beta)$-swap at $w'$ gives an edge $\Delta$-coloring of $G-xy$, say $\varphi_4$. 
		If this Kempe change does not alter the color of $zv_1$, i.e., $\varphi_4(zv_1) = \varphi(zv_1) = 1$, then there is a short broom with vertex set $\{x, y, z\} \cup \{v_1, \ldots, v_t\} \cup \{u_1, \ldots, u_s\} \cup \{w_1, \ldots, w_{l}\}$ under $\varphi_4$ and $1 \in \overline{\varphi_4}(w) \cap \overline{\varphi_4}(w')$, and we are back to Case A by taking $\varphi = \varphi_4$. 
		If this Kempe change does alter the color of $zv_1$, i.e., $\varphi_4(zv_1) = \beta$, then there is a short broom with vertex set $\{x, y, z\} \cup \{u_1, \ldots, u_s\} \cup \{w_1, \ldots, w_{l}\}$ under $\varphi_4$, and we are also back to Case A by taking $\varphi = \varphi_4$. Thus, $w$ is not $(1, \beta)$-linked with $w'$. 
		
		As at least one of $w$ and $w'$ is not $(1, \beta)$-linked with $y$, without loss of generality, we assume that $w'$ is not $(1, \beta)$-linked with $y$. Then doing a $(1, \beta)$-swap at $w'$ gives an edge $\Delta$-coloring of $G-xy$, say $\varphi_5$. Regardless of whether this Kempe change alter the color of $zv_1$ or not, there is still a short broom with vertex set $\{x, y, z\} \cup \{v_1, \ldots, v_t\} \cup \{u_1, \ldots, u_s\} \cup \{w_1, \ldots, w_{l}\}$ because $\beta \in \varphi_5(w)$ and $1 \in \varphi_5(w')$. And then, we are back to Case A by taking $\varphi = \varphi_5$.

		\textbf{Case C:} $\sum_{\alpha \in M \cap (\overline{\varphi}(x) \cup \overline{\varphi}(y))} m_{\varphi, B}(\alpha) = 0$.
		
		Since $\sum_{\alpha \in [1, \Delta]} m_{\varphi, B}(\alpha) \geq 2$, there exists some $\beta \in \varphi(x) \cap \varphi(y)$ and at least two distinct vertices $u, v \in V(B) \backslash \{x, y, z\}$ such that $\beta \in \overline{\varphi}(u) \cap \overline{\varphi}(v)$. Then we may take $u = u_s$ for some $s \geq 1$ with $\varphi(zu) = \beta_{s-1}$ being $\beta_0$-inducing, and $v = v_t$ for some $t \geq 1$ with $\varphi(zv) = \alpha_{t-1}$ being $\alpha_0$-inducing. 
		Similar to Case B, we have if $v_1 = u_1$, then $\varphi(zv_1) = \alpha_{0} \neq 1$. Moreover, for the cases $v_1 = u_1$, $v_1 \neq u_1$ and $1 \notin \{\alpha_{0}, 
		\beta_{0}\}$, we are back to Case A or B following the similar argument to Subcase B.2 with $u$ playing the role of $w$ and $v$ playing the role of $w'$.

		Now, suppose $v_1 \neq u_1$ and $1 \in \{\alpha_{0}, \beta_{0}\}$. Without loss of generality, we assume $\alpha_0 = 1$. Since $d_G(z) = \Delta$, both $1$ and $\beta$ are present at $z$. Let $\varphi(zz_\beta) = \beta$. By the minimality of $t$ and $s$ (Recall the condition $(\ast)$), we have $z_\beta \notin \{v_1, v_2, \ldots, v_t\} \cup \{u_1, u_2, \ldots, u_s\}$.
		%		Suppose $1 \notin \{\alpha_{0}, \beta_{0}\}$. In this case, at least one of $u, v$, say $u$, is not $(1, \beta)$-linked with $y$. Then a $(1, \beta)$-swap at $v$ gives an edge $\Delta$-coloring of $G-xy$, say $\varphi_2$. If $u$ is $(1, \beta)$-linked with $v$, then we are back to Case A by taking $\varphi_2 = \varphi$ since $1 \in \overline{\varphi_2}(u) \cap \overline{\varphi_2}(v) \cap (\overline{\varphi_2}(x) \cup \overline{\varphi_2}(y))$ and $(x, xy, y, yz, z, zv_1, v_1,$ $\ldots, zv_t, v_t,$ $zu_1, u_1, \ldots,$ $zu_s, u_s)$ is still a short broom under $\varphi_2$. Thus, $u$ is not $(1, \beta)$-linked with $v$. And we are also back to Case A by taking $\varphi_2 = \varphi$ since $1 \in \overline{\varphi_2}(v) \cap (\overline{\varphi_2}(x) \cup \overline{\varphi_2}(y))$, $\alpha \in \overline{\varphi_2}(w) \cap M \cap (\overline{\varphi_2}(x) \cup \overline{\varphi_2}(y))$ and regardless of whether $\alpha$ is 1-inducing or not, $(x, xy, y, yz, z, zv_1, v_1,$ $\ldots, zv_t, v_t,$ $zu_1, u_1, \ldots,$ $zu_s, u_s)$ is still a short broom under $\varphi_2$ due to $\beta \in \overline{\varphi_2}(v)$ and $1 \in \overline{\varphi_2}(x)$. Suppose $1 \in \{\alpha_{0}, \beta_{0}\}$. 
		%		Without loss of generality, we assume $\alpha_0 = 1$. 
		
		We first claim that $u$ is $(1, \beta)$-linked with $x$. Otherwise, suppose $u$ is not $(1, \beta)$-lined with $x$. Then a $(1, \beta)$-swap at $u$ gives an edge $\Delta$-coloring of $G-xy$, say $\varphi_1$. 
		%		If $u$ is $(1, \beta)$-linked with $v$, then we are back to Case A by taking $\varphi_2 = \varphi$ since $1 \in \overline{\varphi_2}(u) \cap \overline{\varphi_2}(v) \cap (\overline{\varphi_2}(x) \cup \overline{\varphi_2}(y))$ and $(x, xy, y, yz, z, zv_1, v_1,$ $\ldots, zv_t, v_t,$ $zu_1, u_1, \ldots,$ $zu_s, u_s)$ is still a short broom under $\varphi_2$. Thus, $u$ is not $(1, \beta)$-linked with $v$. 
		And we are back to Case B or A by taking $\varphi = \varphi_1$ since $1 \in \overline{\varphi_1}(u) \cap (\overline{\varphi_1}(x) \cup \overline{\varphi_1}(y))$ and regardless of whether this Kempe change alters the color of $zv_1$ or not,
		%		$\alpha$ is 1-inducing or not
		$(x, xy, y, yz, z, $$zu_1, u_1, \ldots,$ $zu_s, u_s)$ is still a short broom under $\varphi_1$.
		Thus, $u$ is $(1, \beta)$-lined with $x$. 
		Then a $(1, \beta)$-swap at $v$ gives an edge $\Delta$-coloring of $G-xy$, say $\varphi_2$. 
		Regardless of whether this Kempe change alters the color of $zv_1$ or not,
		$(x, xy, y, yz, z, $$zu_1,$ $u_1, \ldots,$ $zu_s, u_s, zv_1, v_1,$ $\ldots, zv_t, v_t,)$ is still a short broom under $\varphi_2$ due to $\beta \in \overline{\varphi_2}(u)$. So we are back to Case B or A by taking $\varphi = \varphi_2$.
	\end{proof}
	
	\section{Proof of Theorem \ref{vertex-splitting thm}}
	
	\begin{lem} \label{tkpl}
		Let $G$ be a $\Delta$-critical graph with an edge $xy\in E(G)$, and let $\varphi \in \mathcal{C}^{\Delta}(G-xy)$. Suppose $$K = (x, xy, y, yz, z, zu, u)\ \  \mbox{and}\ \  K^* = (x, xy, y, yz, z, zv, v)$$ are two Kierstead paths with respect to $xy$ and $\varphi$. If $\overline{\varphi}(u) \cup \overline{\varphi}(v) \subseteq \overline{\varphi}(x) \cup \overline{\varphi}(y)$, then $\max\{d(u), d(v)\} = \Delta$.
	\end{lem}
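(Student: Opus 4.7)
The plan is to glue the two Kierstead paths at $z$ into a single short broom and then read the conclusion directly off Theorem~\ref{ne-broom}. Focusing on the nontrivial case $u \ne v$ (the statement is clearly aimed at two distinct end-vertices), I would set
\[
B = (x, xy, y, yz, z, zu, u, zv, v).
\]
Since $K$ is a Kierstead path, condition (K1) at position $i=2$ gives $\varphi(zu) \in \overline{\varphi}(x) \cup \overline{\varphi}(y) \subseteq \overline{\varphi}(\{x,y,z\})$, and applying the same condition to $K^*$ yields $\varphi(zv) \in \overline{\varphi}(\{x,y,z\}) \subseteq \overline{\varphi}(\{x,y,z,u\})$. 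Together with $u \ne v$, this confirms that $B$ is a short broom with respect to $xy$ and $\varphi$.

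Next I would use Lemma~\ref{multifan} to pin down $|\overline{\varphi}(V(B))|$. Condition (K1) at $i=1$ on $K$ gives $\varphi(yz) \in \overline{\varphi}(x)$, so $(y, yx, x, yz, z)$ is a multifan centered at $y$ relative to the critical edge $xy$; Lemma~\ref{multifan}(a) then tells me $\{x,y,z\}$ is $\varphi$-elementary. Combining the hypothesis $\overline{\varphi}(u) \cup \overline{\varphi}(v) \subseteq \overline{\varphi}(x) \cup \overline{\varphi}(y)$ with this elementarity gives
\[
\overline{\varphi}(V(B)) = \overline{\varphi}(x) \cup \overline{\varphi}(y) \cup \overline{\varphi}(z), \qquad |\overline{\varphi}(V(B))| = |\overline{\varphi}(x)| + |\overline{\varphi}(y)| + |\overline{\varphi}(z)|.
\]
I would then invoke the double-counting identity
\[
\sum_{\alpha \in [1,\Delta]} m_{\varphi, B}(\alpha) \;=\; \sum_{w \in V(B)} |\overline{\varphi}(w)| \;-\; |\overline{\varphi}(V(B))|,
\]
which follows from the definition of $m_{\varphi, B}$ by swapping the order of summation over the pairs $(\alpha, w)$ with $\alpha \in \overline{\varphi}(w)$. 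The three $\{x,y,z\}$-terms cancel and what remains is
\[
\sum_{\alpha \in [1,\Delta]} m_{\varphi, B}(\alpha) = |\overline{\varphi}(u)| + |\overline{\varphi}(v)| = (\Delta - d(u)) + (\Delta - d(v)).
\]

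Finally, Theorem~\ref{ne-broom} applied to $B$ gives $(\Delta - d(u)) + (\Delta - d(v)) \le 1$, i.e.\ $d(u) + d(v) \ge 2\Delta - 1$. Since $d(u), d(v) \le \Delta$, at least one of them must equal $\Delta$, which is the desired conclusion. This is essentially a one-shot application of the Main Theorem, so I do not foresee any genuinely hard step. The only points requiring care are (i) checking mechanically that $B$ satisfies the short broom definition and that $(y, yx, x, yz, z)$ is a multifan, and (ii) observing that the degenerate case $u = v$ lies outside the intended scope of the lemma (the short broom would collapse to $K$ and the Main Theorem would yield only $|\overline{\varphi}(u)| \le 1$, not the stronger $d(u) = \Delta$).
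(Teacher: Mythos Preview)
Your proposal is correct and follows exactly the paper's approach: the paper simply observes that $(x, xy, y, yz, z, zu, u, zv, v)$ is a short broom and says the result follows directly from Theorem~\ref{ne-broom}, while you spell out the double-counting that turns the inequality $\sum_\alpha m_{\varphi,B}(\alpha)\le 1$ into $d(u)+d(v)\ge 2\Delta-1$. Your remark about the degenerate case $u=v$ is also apt; the paper silently assumes the two Kierstead paths are distinct.
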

	
	\begin{proof}
		Since $K = (x, xy, y, yz, z, zu, u)\ \  \mbox{and}\ \  K^* = (x, xy, y, yz, z, zv, v)$ are two Kierstead paths with respect to $xy$ and $\varphi$, $(x, xy, y, yz, z, zu, u, zv, v)$ is a short broom. Then  the result follows directly from Theorem \ref{ne-broom}.
	\end{proof}
	
	Let $G$ be a graph and let $u, v \in V(G)$ be adjacent. We call $(u, v)$ a \emph{full-deficiency pair} of $G$ if $d(u) + d(v) = \Delta(G) + 2.$
	Note that if we split a vertex $x$ of a $\Delta$-regular graph into vertices $u$ and $v$, then we obtain a full-deficiency pair $(u, v)$. 
	If $G$ is $\Delta$-critical, then a full-deficiency pair $(u, v)$ of $G$ is called a \emph{saturating pair} of $G - uv$ in \cite{BVA2020}.
	
	\begin{lem}[\cite{CCSVS2022}]\label{fdpl}
		If $G$ is an $n$-vertex class 2 graph with a full-deficiency pair $(x, y)$ such that $xy$ is a critical edge of $G$, then $G$ satisfies the following properties.
		\begin{itemize}
			\item[(i)] For every $a \in (N_G(x) \cup N_G(y)) \backslash \{x, y\}$, $d_G(a)=\Delta$.
			\item[(ii)] For every $a \in V(G) \backslash \{x, y\}$, if $dist(a, \{x, y\})=2$, then $d_G(a) \geq \Delta-1$. If also $d(x)<\Delta$ and $d_G(y)<\Delta$, then $d_G(a)=\Delta$.
			\item[(iii)] For every $a \in V(G) \backslash \{x, y\}$, if $d_G(a) \geq n-|N_G(x) \cup N_G(y)|$, then $d_G(a) \geq \Delta-1$. If also $d_G(x)<\Delta$ and $d(y)<\Delta$, then $d(a)=\Delta$.
		\end{itemize}
	\end{lem}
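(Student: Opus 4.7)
The plan is to exploit the fact that the full-deficiency condition $d(x)+d(y)=\Delta+2$ forces the missing-color sets at $x$ and $y$ to partition $[1,\Delta]$, which makes the Vizing-adjacency and Kierstead-path machinery particularly potent. Fix $\varphi\in\mathcal{C}^\Delta(G-xy)$. Since $(y,yx,x)$ is a trivial multifan centered at $y$, Lemma~\ref{multifan}(a) gives $\overline{\varphi}(x)\cap\overline{\varphi}(y)=\emptyset$, while a direct count yields $|\overline{\varphi}(x)|+|\overline{\varphi}(y)|=(\Delta-d(x)+1)+(\Delta-d(y)+1)=\Delta$. Hence $\overline{\varphi}(x)\cup\overline{\varphi}(y)=[1,\Delta]$, and in particular $\overline{\varphi}(x)=\varphi(y)$ and $\overline{\varphi}(y)=\varphi(x)$.

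For part (i), I would invoke Vizing's Adjacency Lemma (Lemma~\ref{VAL}) at the critical edge $xy$: the vertex $x$ has at least $\Delta-d(y)+1=d(x)-1$ $\Delta$-neighbors in $V(G)\setminus\{y\}$, which is exactly the number of neighbors $x$ has aside from $y$, so every such neighbor has degree $\Delta$; the argument at $y$ is symmetric.

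For part (ii), take any $a$ with $\mathrm{dist}(a,\{x,y\})=2$ and pick a common neighbor $u\in N(a)\cap(N(x)\cup N(y))$; by (i), $d(u)=\Delta$. Assuming without loss of generality $u\in N(y)$, the identity $\varphi(y)=\overline{\varphi}(x)$ forces $\varphi(yu)\in\overline{\varphi}(x)$, while $\varphi(ua)\in[1,\Delta]=\overline{\varphi}(x)\cup\overline{\varphi}(y)$ trivially; hence $K=(x,xy,y,yu,u,ua,a)$ satisfies condition (K1) automatically and is a Kierstead path with respect to $xy$ and $\varphi$. Lemma~\ref{Kierstead path}(b) then gives $|\overline{\varphi}(a)\cap(\overline{\varphi}(x)\cup\overline{\varphi}(y))|\le 1$, and combined with $\overline{\varphi}(a)\subseteq[1,\Delta]=\overline{\varphi}(x)\cup\overline{\varphi}(y)$ this yields $|\overline{\varphi}(a)|\le 1$, i.e.\ $d(a)\ge\Delta-1$. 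When additionally $d(x),d(y)<\Delta$, we have $d(v_1)=d(y)<\Delta$, so Lemma~\ref{Kierstead path}(a) makes $V(K)$ elementary and forces $\overline{\varphi}(a)=\emptyset$, i.e.\ $d(a)=\Delta$.

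For part (iii), I would reduce to (i) and (ii). If $a\in N(x)\cup N(y)\setminus\{x,y\}$ then (i) gives $d(a)=\Delta$; otherwise $a\notin N(x)\cup N(y)$, so all of $a$'s neighbors lie in $V(G)\setminus(N(x)\cup N(y)\cup\{a\})$, a set of size at most $n-|N(x)\cup N(y)|-1$, forcing a contradiction with the hypothesis $d(a)\ge n-|N(x)\cup N(y)|$ unless $a$ has a neighbor in $N(x)\cup N(y)\setminus\{x,y\}$, which places $a$ at distance $2$ from $\{x,y\}$, and (ii) applies. The only real subtlety is spotting the collapse $\overline{\varphi}(x)\cup\overline{\varphi}(y)=[1,\Delta]$ at the outset; once that is in hand, no Kempe-chain manipulation is required, and each of (i)--(iii) follows from a single application of Lemma~\ref{VAL} or Lemma~\ref{Kierstead path}.
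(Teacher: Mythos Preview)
Your proof is correct. Note, however, that the paper does not supply its own proof of Lemma~\ref{fdpl}; the result is quoted from \cite{CCSVS2022} and used as a black box, so there is nothing in the present paper to compare against. Your argument is the natural one and is essentially what one finds in the cited source: the full-deficiency condition $d(x)+d(y)=\Delta+2$ together with the elementarity of $\{x,y\}$ forces $\overline{\varphi}(x)\cup\overline{\varphi}(y)=[1,\Delta]$, after which (i) drops out of VAL by an exact count, (ii) follows because any $4$-vertex path through a common neighbor of $a$ and $\{x,y\}$ automatically satisfies (K1) and Lemma~\ref{Kierstead path} then bounds $|\overline{\varphi}(a)|$, and (iii) is a pigeonhole reduction to (i)--(ii).

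One small point of presentation in (ii): your ``without loss of generality $u\in N(y)$'' is genuinely symmetric only because in the complementary case $u\in N(x)\setminus N(y)$ you would instead build the Kierstead path $(y,yx,x,xu,u,ua,a)$ and, for the sharpened conclusion, invoke $d(v_1)=d(x)<\Delta$ rather than $d(y)<\Delta$. Since both inequalities are part of the hypothesis this is fine, but it is worth making the symmetry explicit rather than leaving it implicit in the phrase ``without loss of generality''.
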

	
	\begin{cor}\label{v-s c}
		Let $G$ be an $n$-vertex class 2 graph with a full-deficiency pair $(x, y)$, and let $xy$ be a critical edge of $G$. If $\Delta \geq \frac{2(n - 1)}{3}$, then there exists at most one vertex $u \in V(G) \backslash \{x, y\}$ such that $d_G(u)=\Delta-1$.
	\end{cor}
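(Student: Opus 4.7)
The plan is to argue by contradiction: assume two distinct vertices $u, v \in V(G) \setminus \{x, y\}$ with $d_G(u) = d_G(v) = \Delta - 1$, and aim to exhibit a common neighbor $z \in N_G(u) \cap N_G(v) \cap (N_G(y) \setminus \{x\})$. Once such a $z$ is in hand, Lemma \ref{tkpl} applied to the two Kierstead paths $(x, xy, y, yz, z, zu, u)$ and $(x, xy, y, yz, z, zv, v)$ immediately yields $\max\{d(u), d(v)\} = \Delta$, contradicting $d(u) = d(v) = \Delta - 1$. The missing-color containment $\overline{\varphi}(u) \cup \overline{\varphi}(v) \subseteq \overline{\varphi}(x) \cup \overline{\varphi}(y)$ required by that lemma, together with the Kierstead-path color conditions $\varphi(yz) \in \overline{\varphi}(x)$ and $\varphi(zu), \varphi(zv) \in \overline{\varphi}(x) \cup \overline{\varphi}(y)$, are automatic for any $z \in N_G(y) \setminus \{x\}$: indeed $(x, y)$ being a full-deficiency pair forces $\{x, y\}$ to be $\varphi$-elementary with $|\overline{\varphi}(x)| + |\overline{\varphi}(y)| = \Delta$ via Lemma \ref{multifan}, so $\overline{\varphi}(x) \cup \overline{\varphi}(y) = [1, \Delta]$.

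Fix any $\varphi \in \mathcal{C}^\Delta(G - xy)$, set $t = |N_G(x) \cap N_G(y)|$ and $S = V(G) \setminus (N_G(x) \cup N_G(y))$, so $|S| = n - \Delta - 2 + t$ and $u, v \in S$ by Lemma \ref{fdpl}(i). First I would dispose of the case $d(x) < \Delta$ and $d(y) < \Delta$: Lemma \ref{fdpl}(ii) then forces $u$ to lie at distance at least $3$ from $\{x, y\}$ (else $d_G(u) = \Delta$), whence $N_G(u) \subseteq S$ and $|S| \geq \Delta$. But $t \leq \min\{d(x), d(y)\} - 1 \leq \Delta/2$ combined with $n \leq 3\Delta/2 + 1$ (from $\Delta \geq 2(n-1)/3$) gives $|S| \leq \Delta - 1$, a contradiction.

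So we may assume without loss of generality that $d(y) = \Delta$ and $d(x) = 2$; write $N_G(x) = \{y, x'\}$, so $t \in \{0, 1\}$ and $|S| \leq \Delta/2$. The same $|S| \geq \Delta$ obstruction rules out either $u$ or $v$ being at distance $\geq 3$, so both sit at distance exactly $2$ from $\{x, y\}$. To produce the common neighbor $z$, I would partition $V(G) \setminus \{x, y, u\}$ into the three sets $(N_G(y) \setminus \{x\})$, $(\{x'\} \setminus N_G(y))$ and $(S \setminus \{u\})$, and bound the last two contributions to $d(u) = \Delta - 1$ by $1 - t$ and $|S| - 1$ respectively, obtaining
\begin{equation*}
|N_G(u) \cap (N_G(y) \setminus \{x\})| \ \geq\ 2\Delta - n + 1 \ \geq\ \left\lceil \Delta/2 \right\rceil,
\end{equation*}
and analogously for $v$. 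Since $|N_G(y) \setminus \{x\}| = \Delta - 1 < 2\lceil \Delta/2 \rceil$, a two-set inclusion-exclusion produces the required $z \in N_G(u) \cap N_G(v) \cap (N_G(y) \setminus \{x\})$.

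The main obstacle I expect is this final counting step, which is extremely tight: the inequality $\lceil \Delta/2 \rceil + \lceil \Delta/2 \rceil > \Delta - 1$ uses the full strength of the hypothesis $\Delta \geq 2(n-1)/3$, and in the extremal case the guarantee reduces to exactly one common neighbor. The first case, by contrast, is softer, because the gap between the forced $|S| \geq \Delta$ and the derived $|S| \leq \Delta - 1$ is comfortable throughout the whole range $\Delta \geq 2(n-1)/3$.
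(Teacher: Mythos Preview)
Your proof is correct and follows the same overall strategy as the paper: reduce to the case $d(y)=\Delta$, $d(x)=2$, locate a common neighbour $z\in N_G(y)\cap N_G(u)\cap N_G(v)$ by counting, and then apply Lemma~\ref{tkpl} using the fact that $\overline{\varphi}(x)\cup\overline{\varphi}(y)=[1,\Delta]$.

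The differences are only in the bookkeeping. For the case $d(x),d(y)<\Delta$ the paper invokes Lemma~\ref{fdpl}(iii) directly (the inequality $d_G(u)+|N_G(x)\cup N_G(y)|\ge n$), whereas you route through Lemma~\ref{fdpl}(ii) and a distance argument; these are two sides of the same pigeonhole. In the main case the paper exploits an extra structural observation you do not use: since $d(y')=\Delta$, the pair $(x,y')$ is \emph{also} a full-deficiency pair, so Lemma~\ref{fdpl}(i) gives $y'\notin N_G(u)\cup N_G(v)$, and the paper's inclusion--exclusion then runs inside $V(G)\setminus\{x,y,y'\}$. Your partition of $V(G)\setminus\{x,y,u\}$ into $N_G(y)\setminus\{x\}$, $\{x'\}\setminus N_G(y)$, and $S\setminus\{u\}$ sidesteps this: you allow $x'$ to be a neighbour of $u$ and absorb it in the $1-t$ term, still arriving at $|N_G(u)\cap(N_G(y)\setminus\{x\})|\ge 2\Delta-n+1\ge\lceil\Delta/2\rceil$, which suffices. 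So your argument is marginally more self-contained, while the paper's is a touch cleaner once the second full-deficiency pair is noticed; neither buys a stronger conclusion.
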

	
	\begin{proof}
		We proceed by contradiction. Suppose that there exist two distinct vertices $u, v \in V(G) \backslash \{x, y\}$ such that $d_G(u)=d_G(v)=\Delta-1$. According to Lemma \ref{fdpl} $(i)$, $u, v \notin (N_G(x) \cup N_G(y)) \backslash \{x, y\}$. Since $x$ and $y$ are adjacent in $G$ and $d_G(x)+d_G(y)=\Delta+2$, we have $|N_G(x) \cap N_G(y)| \leq \frac{1}{2}\Delta$, which implies that $d_G(u)+|N_G(x) \cup N_G(y)| \geq \Delta-1+(\frac{1}{2}\Delta+2)=\frac{3}{2}\Delta+1 \geq (n - 1) + 1 = n$. Hence, by Lemma \ref{fdpl} $(iii)$, we may assume that one of $x$ and $y$, w.l.o.g., $y$ has degree $\Delta$. Thus $d_G(x)=2$. Let $y'$ denote the other neighbor of $x$ in $G$. As $xy$ is a critical edge of $G$ and $d_G(x)=2$ and $d_G(y)=\Delta$, Lemma 2 (VAL) implies that $d_G(y')=\Delta$. Clearly, $y' \notin \{u, v\}$ and $(x, y')$ is a full-deficiency pair of $G$. Thus, similarly, we may assume $u, v \notin N_G(y')$ by Lemma \ref{fdpl} $(i)$.
		
		Since $d_G(u) = d_G(v) = \Delta-1$ and $x, y, y' \notin N_G(u) \cup N_G(v)$, we obtain $|N_G(u) \cap N_G(v)| \geq 2(\Delta - 1) - (n - 3) \geq \frac{n-1}{3}$. 
		Furthermore, since $u, v \notin N_G(y) \cup (N_G(u) \cap N_G(v))$, we have $|N_G(y) \cap N_G(u) \cap N_G(v)| \geq \Delta + \frac{n-1}{3}-(n-2)=1$. Let $z \in N_G(y) \cap N_G(u) \cap N_G(v)$. 
		As $\{x, y\}$ is $\varphi$-elementary, $|\overline{\varphi}(x) \cup \overline{\varphi}(y)|=|\overline{\varphi}(x)| + |\overline{\varphi}(y)| = \Delta$, and so $\overline{\varphi}(x) \cup \overline{\varphi}(y)=[1, \Delta]$. 
		Thus $K=(x, xy, y, yz, z, zu, u)$ and $K^*=(x, xy, y, yz, z, zv, v)$ are two Kierstead paths with respect to $xy$ and $\varphi$ and $\overline{\varphi}(u) \cup \overline{\varphi}(v) \subseteq \overline{\varphi}(x) \cup \overline{\varphi}(y)$. 
		However, $d_G(u) = d_G(v)=\Delta-1$, contradicting Lemma \ref{tkpl}.
	\end{proof}
	
	Using the proof technique from \cite{CCSVS2022} together with Corollary \ref{v-s c} of this paper, we obtain the following theorem.
	%	, whose proof is omitted here
	
	\splittingthm*
%	\begin{thm}[Theorem \ref{vertex-splitting thm}]
%		Let $G^*$ be an $n$-vertex connected class 1 $\Delta$-regular graph with $\Delta \geq \frac{2(n - 1)}{3}$. If $G$ is obtained from $G^*$ by a vertex-splitting, then $G$ is $\Delta$-critical.
%	\end{thm}
	
	\begin{proof}
		Since $G$ is obtained from $G^*$ by a vertex-splitting, then $G$ is Overfull, which implies that it is class 2. To establish that $G$ is $\Delta$-critical, it suffices to show that each edge of $G$ is critical. Suppose to the contrary that there is an edge $xy \in E(G)$ which is not critical. Let $G' =  G - xy$. Then $\chi'(G') = \Delta + 1$.
		
		Let $ab$ be the edge of $G$ whose contraction yields an $(n-1)$-vertex $\Delta$-regular class 1 graph. Then $(a, b)$ is a full-deficiency pair in $G$ with $ab$ being a critical edge, which implies $ab \neq xy$. Moreover, every vertex in $V(G) \backslash \{a, b\}$ has degree $\Delta$. Since any $\Delta$-coloring of $G - ab$ induces a $\Delta$-coloring of $G' - ab$, the edge $ab$ remains critical in $G'$. 
		If $\{a, b\} \cap \{x, y\} = \emptyset$, then $(a, b)$ is a full-deficiency pair in $G'$ with $d_{G'}(x) = d_{G'}(y) = \Delta - 1$, contradicting Corollary \ref{v-s c}. Therefore, $\{a, b\} \cap \{x, y\} \neq \emptyset$, and without loss of generality, let $a = x$. This gives $d_{G'}(a) + d_{G'}(b) = \Delta + 1$. However, Lemma \ref{VAL} (VAL) requires $d_{G'}(a) + d_{G'}(b) \geq \Delta + 2$, yielding a contradiction.
	\end{proof}
	
	\section{Proof of Theorem \ref{overfull thm}}
	
%	\begin{lem}[\cite{caoovf2022}]
%		Let $G$ be an $n$-vertex $\Delta$-critical graph, and let $a \in V(G)$. If $d(a) \leq \frac{2\Delta-n+2}{3}$, then for each $v \in V(G)\backslash\{a\}$, either $d(v) \geq \Delta-d(a)+1$ or $d(v) \leq n-\Delta+2d(a)-6$. Furthermore, if $d(v) \geq \Delta-d(a)+1$, then for any $b \in N(a)$ with $d(b)=\Delta$ and $\varphi \in \mathcal{C}^{\Delta}(G-ab)$, $|\overline{\varphi}(v) \cap (\overline{\varphi}(a) \cup \overline{\varphi}(b))| \leq 1$.
%	\end{lem}
%	
	\begin{lem}[\cite{caoovf2022}]\label{5vkp}
		Let $G$ be a $\Delta$-critical graph, $xy$ be a critical edge of $G$, $\varphi \in \mathcal{C}^{\Delta}(G-ab)$, and $K=(x, xy, y, yz, z, zs, s, st, t)$ be a Kierstead path with respect to $xy$ and $\varphi$. If $|\overline{\varphi}(t) \cap (\overline{\varphi}(x) \cup \overline{\varphi}(y))| \geq 3$, then the following holds.
		\begin{itemize}
			\item[(a)] There exists a coloring $\varphi^{*} \in \mathcal{C}^{\Delta}(G-xy)$ such that:
			\begin{itemize}
				\item[(i)] $\varphi^{*}(yz) \in \overline{\varphi}^{*}(x) \cap \overline{\varphi}^{*}(t)$;
				\item[(ii)] $\varphi^{*}(zs) \in \overline{\varphi}^{*}(y) \cap \overline{\varphi}^{*}(t)$;
				\item[(iii)] $\varphi^{*}(st) \in \overline{\varphi}^{*}(x)$;
			\end{itemize}
			\item[(b)] $d(y) = d(z) = \Delta$.
		\end{itemize}
	\end{lem}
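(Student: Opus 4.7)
The plan is to prove (a) and (b) separately: (a) by a staged Kempe-change argument that exploits the slack $|\Lambda|\ge 3$, where $\Lambda:=\overline{\varphi}(t)\cap(\overline{\varphi}(x)\cup\overline{\varphi}(y))$; and (b) by applying (a) and then performing a Vizing-style rotation along the Kierstead path to produce a $\Delta$-coloring of $G$, contradicting $\chi'(G)=\Delta+1$. Abbreviate $\zeta=\varphi(yz)$, $\eta=\varphi(zs)$, $\tau=\varphi(st)$; the Kierstead condition already gives $\zeta\in\overline{\varphi}(x)$, $\eta\in\overline{\varphi}(x)\cup\overline{\varphi}(y)$, and $\tau\in\overline{\varphi}(x)\cup\overline{\varphi}(y)\cup\overline{\varphi}(z)$.

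For (a), the target coloring $\varphi^*$ must satisfy $\varphi^*(yz),\varphi^*(zs)\in\overline{\varphi^*}(t)$ together with the refinements (i)--(iii). I would proceed in stages. First, if $\tau\notin\overline{\varphi}(x)$, relocate it via a $(\tau,\alpha)$-swap for some $\alpha\in\Lambda$ chosen to avoid interfering with $\zeta$ and $\eta$. Then pick $\alpha_1\in\Lambda\cap\overline{\varphi}(x)$ distinct from the (possibly updated) colors on $zs$ and $st$, and perform a $(\zeta,\alpha_1)$-swap; the chain is controlled by Lemma~\ref{multifan}(b) to run between $t$ and $x$, so after the swap $yz$ carries a color in $\overline{\varphi^*}(x)\cap\overline{\varphi^*}(t)$. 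A symmetric swap using a distinct $\alpha_2\in\Lambda\cap\overline{\varphi}(y)$ achieves (ii). The three-color redundancy in $\Lambda$ is essential: with two swaps at play, a third free color acts as a buffer so that neither swap disturbs the color membership established by the other.

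For (b), suppose for contradiction that $\min\{d(y),d(z)\}<\Delta$, and apply (a) to obtain $\varphi^*$ with $\zeta^*:=\varphi^*(yz)$, $\eta^*:=\varphi^*(zs)$, $\tau^*:=\varphi^*(st)$ satisfying (i)--(iii). The extra missing color at $y$ or $z$ lets us perform a Vizing-type rotation: uncolor $xy$, reassign $yz\mapsto\eta^*$ (valid since $\eta^*\in\overline{\varphi^*}(y)$), then $zs\mapsto\tau^*$ (valid via (iii)), reassign $st$ to a now-free color at $s$ supplied by the degree slack, and finally color $xy$ by $\zeta^*\in\overline{\varphi^*}(x)$. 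The resulting $\Delta$-coloring of $G$ contradicts $\chi'(G)=\Delta+1$. An alternative route is to observe that the adjusted configuration forms a short broom at $z$ with two distinct common missing colors on its vertex set, directly contradicting Theorem~\ref{ne-broom} (together with Lemma~\ref{lem-short broom} in the reduced-degree regime).

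The principal obstacle is part (a): tracking how each Kempe swap affects the other two target edges and verifying that chain endpoints are as intended. The role of the three distinct colors in $\Lambda$ is precisely to keep a spare color in reserve at every stage, ensuring no swap destroys a previously-achieved color membership. Once (a) is in place, (b) is relatively routine, modulo a careful choice of which extra missing color (at $y$ or at $z$) to use for the final recoloring of $st$.
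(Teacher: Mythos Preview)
The paper does not give its own proof of this lemma; it is quoted from \cite{caoovf2022} and used as a black box in the proof of Lemma~\ref{lfork}. So there is no in-paper argument to compare against. That said, your proposal has genuine gaps in both parts.

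For (a), the crucial sentence ``the chain is controlled by Lemma~\ref{multifan}(b) to run between $t$ and $x$'' is unjustified. Lemma~\ref{multifan}(b) applies only to vertices of a multifan; here $t$ lies two edges beyond $z$ and is not part of any multifan with centre $y$ or $x$. Nothing in the preliminaries forces $P_t(\zeta,\alpha_1)$ to terminate at $x$, and in fact controlling such chains for a 5-vertex Kierstead path is exactly the delicate point. Your three-colour buffer heuristic is the right intuition, but the actual proof (in \cite{caoovf2022}) requires a careful case analysis of where each chain can end, using contradictions with $\chi'(G)=\Delta+1$ when a chain misbehaves; you have not supplied any of that analysis.

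For (b), the rotation you describe does not parse. Recoloring $zs\mapsto\tau^*$ requires $\tau^*\in\overline{\varphi^*}(z)$, but (iii) only gives $\tau^*\in\overline{\varphi^*}(x)$. The phrase ``reassign $st$ to a now-free color at $s$ supplied by the degree slack'' is also wrong: your hypothesis places the slack at $y$ or $z$, not at $s$, and nothing propagates it to $s$. Finally, the alternative route through Theorem~\ref{ne-broom} and Lemma~\ref{lem-short broom} is inapplicable as stated: a short broom has all its leaves adjacent to $z$, whereas here $t$ is adjacent to $s$, not $z$, so $K$ is not a broom and those results say nothing about $V(K)$.
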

	
	\begin{lem}\label{lfork}
		Let $G$ be a class 2 graph and $\varphi \in \mathcal{C}^{\Delta}(G - xy)$. Suppose $K=(x, xy, y, yz, z,$ $zs_1, s_1, s_1t_1, t_1)$ and $K^*=(x, xy, y, yz, z, zs_2, s_2, s_2t_2, t_2)$ are two Kierstead paths with respect to $xy$ and $\varphi$. If $\varphi(s_1t_1)=\varphi(s_2t_2)$, then $|\overline{\varphi}(t_1) \cap \overline{\varphi}(t_2) \cap (\overline{\varphi}(x) \cup \overline{\varphi}(y))| \leq 3$.
	\end{lem}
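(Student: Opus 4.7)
The plan is to argue by contradiction: assume $|\overline{\varphi}(t_1) \cap \overline{\varphi}(t_2) \cap (\overline{\varphi}(x) \cup \overline{\varphi}(y))| \geq 4$. First I note that $s_1 \neq s_2$, because otherwise the edges $s_1t_1$ and $s_1t_2$ would share the endpoint $s_1$ and carry the same color $\tau = \varphi(s_1t_1) = \varphi(s_2t_2)$, violating properness. Since $|\overline{\varphi}(t_i) \cap (\overline{\varphi}(x) \cup \overline{\varphi}(y))| \geq 4 \geq 3$ for both $i \in \{1,2\}$, Lemma \ref{5vkp}(b) applied to each of $K$ and $K^*$ forces $d(y) = d(z) = \Delta$. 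Consequently $\{x,y,z\}$ is $\varphi$-elementary via the multifan $(y, xy, x, yz, z)$, and the sets $\overline{\varphi}(x)$ and $\overline{\varphi}(y)$ are disjoint, so the four assumed common missing colors split, up to symmetry, into at least two in $\overline{\varphi}(x)$ (and the symmetric case is handled analogously).

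The main move is to apply Lemma \ref{5vkp}(a) to $K$ under $\varphi$ and produce a coloring $\varphi^* \in \mathcal{C}^{\Delta}(G - xy)$ with the rigid properties $\varphi^*(yz) = \eta_1 \in \overline{\varphi^*}(x) \cap \overline{\varphi^*}(t_1)$, $\varphi^*(zs_1) = \eta_2 \in \overline{\varphi^*}(y) \cap \overline{\varphi^*}(t_1)$, and $\varphi^*(s_1t_1) = \eta_3 \in \overline{\varphi^*}(x)$. I would then verify that the sequence $(x, xy, y, yz, z, zs_2, s_2, s_2t_2, t_2)$ remains a valid $5$-vertex Kierstead path under $\varphi^*$ and that at least three of the four original common missing colors still lie in $\overline{\varphi^*}(t_2) \cap (\overline{\varphi^*}(x) \cup \overline{\varphi^*}(y))$, performing additional local Kempe changes at $t_2$ if some color had been swapped out.

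With $K^*$ still a Kierstead path under $\varphi^*$ and a common-missing set of size at least $3$, I would apply Lemma \ref{5vkp}(a) again, this time to $K^*$, producing a further coloring $\varphi^{**}$ in which analogous rigid colors $\eta_1', \eta_2', \eta_3'$ govern the edges $yz$, $zs_2$, $s_2t_2$. The rigid three-color structures imposed on the two branches of the short broom $B = (x, xy, y, yz, z, zs_1, s_1, zs_2, s_2)$ should then collide: either (i) two edges incident to $z$ are forced to share a color, giving an immediate contradiction, or (ii) some color in $\varphi^{**}(x) \cap \varphi^{**}(y)$ is missed at three vertices of a short broom that extends $B$ through $t_1$ or $t_2$, contradicting Theorem \ref{ne-broom}. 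As a fallback, the combined chain of Kempe swaps may be closed off into a valid edge $\Delta$-coloring of $G$, contradicting $\chi'(G) = \Delta + 1$.

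The main obstacle is the bookkeeping in the second paragraph: Lemma \ref{5vkp}(a) is established through a sequence of Kempe swaps, and a priori these swaps could recolor edges incident to $s_2$ or alter the missing color set at $t_2$. The bulk of the actual proof will therefore be devoted to a careful case analysis tracking which Kempe chains used in deriving $\varphi^*$ touch $\{s_2, t_2\}$, confirming that at least three of the four common missing colors survive, and then executing the reverse application of Lemma \ref{5vkp}(a) to $K^*$ without destroying the rigid structure already imposed on $K$.
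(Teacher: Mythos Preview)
Your plan has a genuine structural gap. The crux of the paper's argument is to build \emph{one} coloring $\varphi^*$ that simultaneously pins down the colors on \emph{both} branches: $\varphi^*(yz), \varphi^*(zs_2) \in \overline{\varphi^*}(x)\cap\overline{\varphi^*}(t_1)\cap\overline{\varphi^*}(t_2)$, $\varphi^*(zs_1)\in\overline{\varphi^*}(y)\cap\overline{\varphi^*}(t_1)\cap\overline{\varphi^*}(t_2)$, and $\varphi^*(s_1t_1)=\varphi^*(s_2t_2)\in\overline{\varphi^*}(x)$. To do this, every Kempe swap in the construction is performed at $t_1$ and $t_2$ \emph{together}, so that the four common missing colors stay synchronized throughout. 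Your proposal instead applies Lemma~\ref{5vkp}(a) to $K$ as a black box and then again to $K^*$. The second application would almost certainly undo the rigid structure you obtained on $K$: the swaps hidden inside Lemma~\ref{5vkp}(a) for $K^*$ may recolor $yz$, $zs_1$, or $s_1t_1$, and there is no mechanism in your plan to prevent this. You yourself flag the bookkeeping as ``the main obstacle,'' but it is not bookkeeping---it is the entire content of the lemma, and the only way around it is to redo the construction of $\varphi^*$ by hand while watching both $t_1$ and $t_2$ at once.

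Your endgame is also off target. Theorem~\ref{ne-broom} concerns short brooms, whose non-path vertices $v_1,\dots,v_p$ are all \emph{neighbors of $z$}; the vertices $t_1,t_2$ lie one step further out and are not part of any short broom at $z$, so the appeal to Theorem~\ref{ne-broom} does not apply. The paper's actual finish, once the simultaneous $\varphi^*$ is in hand, is a direct Kempe-chain contradiction: one analyzes how $P_{t_1}(\alpha,\gamma)$ passes through $yz$ and then how a $(\beta,\eta)$-chain at $z$ interacts with $zs_2$, in each case exhibiting an explicit $\Delta$-coloring of $G$ or a broken $(\alpha,\beta)$-link between $x$ and $y$.
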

	
	\begin{proof}
		Let $\Gamma = \overline{\varphi}(t_1) \cap \overline{\varphi}(t_2) \cap (\overline{\varphi}(x) \cup \overline{\varphi}(y)) $. Suppose to the contrary that $ |\Gamma| \geq 4 $. By considering $ K $ and applying Lemma \ref{5vkp}, we conclude that $ d(y) = d(z) = \Delta $. We then follow the argument in \cite[Proof of Lemma 2.8]{caoovf2022} to show that there exists $ \varphi^* \in \mathcal{C}^\Delta(G - xy) $ such that:
		\begin{itemize}
			\item[(i)] $ \varphi^*(yz), \varphi^*(zs_2) \in \overline{\varphi}^*(x) \cap \overline{\varphi}^*(t_1) \cap \overline{\varphi}^*(t_2) $,
			\item[(ii)] $ \varphi^*(zs_1) \in \overline{\varphi}^*(y) \cap \overline{\varphi}^*(t_1) \cap \overline{\varphi}^*(t_2) $, and
			\item[(iii)] $ \varphi^*(s_1t_1) = \varphi^*(s_2t_2) \in \overline{\varphi}^*(x) $.
		\end{itemize}
		
		%	See Figure 2 for a depiction of the colors described above.
		
		Let $ \alpha, \beta, \tau, \eta \in \Gamma $ be distinct, and let $ \varphi(s_1t_1) = \varphi(s_2t_2) = \gamma $. 
		%		We can obtain $\alpha \in \overline{\varphi}(x)$, $ \beta \in \overline{\varphi}(y) $, $ \varphi(yz) = \alpha $ and $ \varphi(zs_1) = \beta $ by operations analogous to those in \cite[Proof of Lemma 2.8]{caoovf2022}; we omit the details here.
		We may assume that $ \alpha \in \overline{\varphi}(x) $ and $ \beta \in \overline{\varphi}(y) $. Otherwise, from $ d(y) = \Delta $, we either have $ \alpha \in \overline{\varphi}(y) $ and $ \beta \in \overline{\varphi}(x) $, or $ \alpha, \beta \in \overline{\varphi}(x) $. 
		If $ \alpha \in \overline{\varphi}(y) $ and $ \beta \in \overline{\varphi}(x) $, we may simply relabel $\alpha$ as $\beta$ and $\beta$ as $\alpha$. Thus, we only consider that $\alpha, \beta \in \overline{\varphi}(x)$. Let $\lambda \in \overline{\varphi}(y) $. Since $ x $ and $ y $ are $ (\beta, \lambda) $-linked, we do a $ (\beta, \lambda) $-swap at $ y $. This Kempe change may alter the colors on some edges of $ K $ and $ K^* $, but they are still Kierstead paths with respect to $ xy $ and the current coloring. Moreover, as $ d(y) = d(z) = \Delta $ and $K$ is a Kierstead path, we have $\overline{\varphi}(y) = \beta$ and $\gamma, \alpha, \tau, \eta \in \overline{\varphi}(x)$.
		
		Next, we may assume that $ \varphi(yz) = \alpha \in \overline{\varphi}(x) \cap \overline{\varphi}(t_1) \cap \overline{\varphi}(t_2)$. If not, since $K$ is a Kierstead path, we have $\varphi(yz) = \alpha' \in \overline{\varphi}(x)$. Since $ x $ and $ y $ are $ (\alpha', \beta) $-linked, we do an $ (\alpha', \beta) $-swap at both $ t_1 $ and $t_2$. Now we do an $ (\alpha, \beta) $-swap at both $t_1$ and $t_2$ as $ x $ and $ y $ are $ (\alpha, \beta) $-linked. Finally, we relabel $\alpha$ as $\alpha'$ and $\alpha'$ as $\alpha$. So now we get the color on $ yz $ to be $ \alpha \in \overline{\varphi}(x) \cap \overline{\varphi}(t_1) \cap \overline{\varphi}(t_2) $.
		
		%We may now assume that $ \varphi(cu) = \tau $. If not, let $ \varphi(cu) = \tau' $. Then $ \tau' \in \overline{\varphi}(a) $. Since $ a $ and $ b $ are $ (\beta, \tau) $-linked, we do a $ (\beta, \tau) $-swap at $ b $. Then do $ (\tau, \tau') $-$ (\tau', \beta) $-swaps at $ b $.
		%	
		Then, we may assume that $\varphi(zs_1) = \beta \in \overline{\varphi}(y) \cap \overline{\varphi}(t_1) \cap \overline{\varphi}(t_2)$. Suppose to the contrary that $\varphi(zs_1)  = \beta' \neq \beta $. 
		Then $ \beta' \in \overline{\varphi}(x) $ since $ \overline{\varphi}(y) =  \beta  $ and $d_G(z) = \Delta$. If $ \beta' \in \Gamma $, we do $ (\beta, \gamma)-(\gamma, \beta') $-swaps at $ y $. 
		Thus, we assume $ \beta' \notin \Gamma $. 
		Furthermore, we can conclude that $ z \notin P_x(\beta, \beta') = P_y(\beta, \beta') $. 
		If not, we suppose $ z \in P_x(\beta, \beta') = P_y(\beta, \beta') $. We do a $ (\beta, \beta') $-swap at both $ t_1 $ and $ t_2 $. Since $ x $ and $ y $ are $ (\tau, \beta) $-linked, we do a $ (\beta, \tau) $-swap at both $t_1$ and $t_2$. Now we are back to the case when $\beta' \in \Gamma$. 
		Thus, $z \notin P_x(\beta, \beta') = P_y(\beta, \beta') $. 
		We do $ (\beta, \beta') $-swaps at $z$, $t_1$ and $t_2$. Since $x$ and $y$ are $ (\alpha, \beta) $-linked, we do $(\alpha, \beta)$-swap(s) at both $t_1$ and $t_2$. Then we do $(\beta, \gamma)-(\gamma, \tau)$-swaps at $y$. Now $ x $ and $ y $ are $ (\alpha, \tau)$-linked, we do $(\alpha, \tau)$-swap(s) at both $t_1$ and $t_2$. Then $(\tau, \eta)-(\eta, \gamma)-(\gamma, \beta)$-swaps at y give $\varphi(zs_1) = \beta \in \overline{\varphi}(y) \cap \overline{\varphi}(t_1) \cap \overline{\varphi}(t_2)$.
		
		Next, we may assume that $ \varphi(zs_2) = \eta \in \overline{\varphi}(x)\cap \overline{\varphi}(t_1) \cap \overline{\varphi}(t_2)$. Suppose otherwise that $ \varphi(zs_2) = \eta' $. Since $K^*$ is a Kierstead path with $\overline{\varphi}(y) = \beta$ and $d_G(z) = \Delta$, it follows that $ \eta' \in \overline{\varphi}(x)$.
		We first do a $(\beta, \eta')$-swap at both $t_1$ and $t_2$. Note that this Kempe change may alter the colors of $zs_1$ and $zs_2$. By the symmetry of $s_1$ and $s_2$, we may assume without loss of generality that it does not.
		We then do an $(\alpha, \beta)$-swaps at both $ t_1 $ and $ t_2 $ since $x$ and $y$ are $(\alpha, \beta)$-linked. Now $ z \in P_{t_1}(\beta, \gamma) $, we do a $ (\beta, \gamma) $-swap at $y$, then a $(\gamma, \tau)$-swap at $y$. Since $ x $ and $ y $ are $ (\tau, \alpha) $-linked, we do a $ (\tau, \alpha) $-swap at both $ t_1 $ and $ t_2 $. Finally, $(\tau, \eta)-(\eta, \gamma)-(\gamma, \beta)$-swaps at $y$ give a desired coloring.

%		Furthermore, we can conclude that $ z \in P_x(\beta, \eta') = P_y(\beta, \eta') $. Otherwise, suppose that $ z \notin P_x(\beta, \eta') = P_y(\beta, \eta') $.  And then we consider the following cases. If $\eta' \in \overline{\varphi}(t_2)$, we get a $ \Delta $-coloring of $ G $ by doing a $ (\beta, \eta') $-swap along the $ (\beta, \eta') $-chain containing $u$ and then the same operation as above with $ t_2 $ playing the role of $ t_1 $. If $\eta' \notin \overline{\varphi}(t_2)$, then we do the following swaps in sequence: a $ (\beta, \eta') $-swap along the $ (\beta, \eta') $-chain containing $u$, $(\alpha, \beta)$-swap(s) at $t_1$ and $t_2$, $(\beta, \gamma)-(\gamma, \tau)$-swaps at $y$, $(\alpha, \tau)$-swaps at $t_1$ and $t_2$, $(\tau, \eta)-(\eta, \gamma)-(\gamma, \beta)$-swaps at $y$, giving a desired coloring. Thus, $ z\in P_x(\beta, \eta') = P_y(\beta, \eta')$. 

		We have thus obtained the coloring $\varphi^*$ as desired. For convenience, we now rename $\varphi^*$ to $\varphi$. Thus, $\varphi$ satisfies conditions: (i) $ \varphi(yz), \varphi(zs_2) \in \overline{\varphi}(x) \cap \overline{\varphi}(t_1) \cap \overline{\varphi}(t_2) $, (ii) $ \varphi(zs_1) \in \overline{\varphi}(y) \cap \overline{\varphi}(t_1) \cap \overline{\varphi}(t_2) $ and (iii) $ \varphi(s_1t_1) = \varphi(s_2t_2) \in \overline{\varphi}(x) $.
		
		We first show that $ yz \in P_{t_1}(\alpha, \gamma) $. If not, doing an $ (\alpha, \gamma) $-swap at $ t_1 $ yields $\varphi_1 $, where $ P_y(\alpha, \beta, \varphi_1) = (y, yz, z, zs_1, s_1, s_1t_1, t_1)$. This contradicts the fact that $ x $ and $ y $ are $ (\alpha, \beta) $-linked with respect to $ \varphi_1 $. Next, we claim that $ P_{t_1}(\alpha, \gamma) $ meets $ z $ before $ y $. Otherwise, doing the following operations:
		\[
		\begin{bmatrix}
			s_1t_1 & P_{[s_1, y]}(\alpha, \gamma) & zs_1 & yz & xy \\
			\gamma \to \beta & \alpha / \gamma & \beta \to \alpha & \alpha \to \beta & \gamma
		\end{bmatrix}
		\]
		gives a $ \Delta $-coloring of $ G $, contradicting the assumption that $ G $ is $ \Delta $-critical. 
		Thus, we have that $ P_{t_1}(\alpha, \gamma) $ meets $z$ before $y$.
		Let $ P_z^*(\beta, \eta) $ be the $ (\beta, \eta) $-chain starting at $ z $ not including the edge $ zs_2 $. 
		Let $ P_z^*(\alpha, \gamma) $ be the $ (\alpha, \gamma) $-chain starting at $ z $ not including the edge $ yz $, which ends at $ t_1 $. 
		If $ P_z^*(\beta, \eta) $ not ends at $y$, we do the following operations to get a $ \Delta $-coloring of $ G $:
		\[
		\begin{bmatrix}
			P_z^*(\alpha, \gamma) & yz & P_z^*(\beta, \eta) & zs_2 & s_2t_2 & xy \\
			\alpha / \gamma & \alpha \to \beta & \beta / \eta & \eta \to \gamma & \gamma \to \eta & \alpha
		\end{bmatrix}.
		\]
		If $ P_z^*(\beta, \eta) $ ends at $y$, then uncoloring $us_1$, doing a $(\beta, \eta)$-swap on $P_{[s_1, y]}(\beta, \eta)$ and coloring $xy$ by $\eta$ give a $\Delta$-coloring $\varphi_2 \in \mathcal{C}^\Delta(G - zs_1)$.
		As at least one of $t_1$ and $t_2$ is not $(\beta, \gamma)$-linked with $z$ under $\varphi_2$, we consider the following cases. If $t_1$ is not $(\beta, \gamma)$-linked with $z$, we do a $(\beta, \gamma)$-swap at $t_1$, then recolor $s_1t_1$ by $\eta$ and color $zs_1$ by $\beta$ to get a $ \Delta $-coloring of $ G $, which is impossible. If $t_2$ is not $(\beta, \gamma)$-linked with $z$, we do a $(\beta, \gamma)$-swap at $t_2$. Then $P_z(\beta, \eta) = (z, zs_2, s_2, s_2t_2, t_2)$, showing a contradiction to the fact that $ z $ and $ s_1 $ are $ (\beta, \eta) $-linked.
		
		Overall, the proof of the Lemma is complete.
	\end{proof}
	
	In a $\Delta$-critical graph $G$ with $xy \in E(G)$ and a coloring $\varphi\in\mathcal{C}^\Delta(G-xy)$, a \textit{fork} H (with respect to $\varphi$) is a subgraph defined as follow:
	\begin{center}
		$V(H) = \{x, y, z, s_1, s_2, t_1, t_2\}$ \text{and} $E(G) = \{xy, yz, zs_1, zs_2, s_1t_1, s_2t_2\}$
	\end{center}
	satisfying the conditions: $\varphi(yz) \in \overline{\varphi}(x)$, $\varphi(zs_1), \varphi(zs_2) \in \overline{\varphi}(x) \cup \overline{\varphi}(y)$, $\varphi(s_1t_1) \in ( \overline{\varphi}(x)$ $\cup \overline{\varphi}(y) ) \cap \overline{\varphi}(t_2)$ and $\varphi(s_2t_2) \in$ $( \overline{\varphi}(x) \cup \overline{\varphi}(y) ) \cap \overline{\varphi}(t_1)$. 
	The concept of a fork was introduced in \cite{caoaveragedegree2}, where it was proved (see \cite[Proposition B]{caoaveragedegree2}) that no such fork can exist in a $\Delta$-critical graph when the sum of the degrees of $a$, $t_1$, and $t_2$ is small.
	
	\begin{lem}[\cite{caoaveragedegree2}]\label{fork}
		Let $ G $ be a $ \Delta $-critical graph, $ xy \in E(G) $, and $ \{z, s_1, s_2, t_1, t_2\} \subseteq V(G) $. If $|(\overline{\varphi}(x)\cup \overline{\varphi}(y))\cap\overline{\varphi}(t_1)\cap\overline{\varphi}(t_2)|\geq 3$ for all $ \varphi \in \mathcal{C}^{\Delta}(G - xy) $, then $G$ does not contain the fork on $ \{x, y, z, s_1, s_2, t_1, t_2\} $ with respect to $\varphi$.
	\end{lem}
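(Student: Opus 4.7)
The plan is a contradiction argument that reduces to Lemma~\ref{lfork}. Suppose $G$ contains a fork $H$ on $\{x,y,z,s_1,s_2,t_1,t_2\}$ with respect to some $\varphi \in \mathcal{C}^{\Delta}(G-xy)$, and set $\gamma_1 := \varphi(s_1t_1)$ and $\gamma_2 := \varphi(s_2t_2)$. The fork definition forces $\gamma_1 \neq \gamma_2$: by definition $\gamma_2 \in \overline{\varphi}(t_1)$, while $\gamma_1$ colors the edge $s_1t_1$ and so $\gamma_1 \in \varphi(t_1)$. By hypothesis, pick three distinct colors $\alpha, \beta, \tau \in (\overline{\varphi}(x) \cup \overline{\varphi}(y)) \cap \overline{\varphi}(t_1) \cap \overline{\varphi}(t_2)$, and observe that $\{\alpha, \beta, \tau\} \cap \{\gamma_1, \gamma_2\} = \emptyset$ because $\gamma_i \notin \overline{\varphi}(t_i)$.

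The key move is a $(\gamma_1, \gamma_2)$-Kempe swap on the chain $P_{t_1}(\gamma_1, \gamma_2, \varphi)$, which recolors $s_1t_1$ from $\gamma_1$ to $\gamma_2$. When this chain is disjoint from $\{x, y, s_2, t_2\}$, one verifies in turn that (i) both sequences $(x, xy, y, yz, z, zs_i, s_i, s_it_i, t_i)$ remain Kierstead paths under the resulting coloring $\varphi'$, using that any edge at $z$ or $s_1$ affected by the swap takes a value in $\{\gamma_1, \gamma_2\} \subseteq \overline{\varphi}(x) \cup \overline{\varphi}(y) = \overline{\varphi}'(x) \cup \overline{\varphi}'(y)$; (ii) $\varphi'(s_1t_1) = \gamma_2 = \varphi'(s_2t_2)$; and (iii) $\gamma_1$ joins $\alpha, \beta, \tau$ in the common missing set $(\overline{\varphi}'(x) \cup \overline{\varphi}'(y)) \cap \overline{\varphi}'(t_1) \cap \overline{\varphi}'(t_2)$. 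Having four colors in this intersection together with $\varphi'(s_1t_1) = \varphi'(s_2t_2)$ directly contradicts the bound of $3$ given by Lemma~\ref{lfork} applied to $\varphi'$.

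When the $(\gamma_1, \gamma_2)$-chain through $t_1$ does meet one of $x, y, s_2, t_2$, I would first reroute it via preliminary Kempe swaps using one of the safe colors $\alpha, \beta, \tau$. For instance, if $t_1$ and $t_2$ lie in the same chain (the chain terminates at $t_2$), a preliminary $(\alpha, \gamma_2)$-swap at $t_2$ recolors $s_2t_2$ to $\alpha$ and thereby removes $t_2$ from the $(\gamma_1, \gamma_2)$-subgraph; analogous preliminary swaps on $(\alpha, \gamma_1)$- or $(\beta, \gamma_2)$-chains handle the cases where the chain ends at $x$ or $y$. After such a reconfiguration the fork conditions are still met (with $\alpha$ now playing the role of $\gamma_2$, say), and the decisive swap of the previous paragraph applies.

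The main obstacle is precisely this rerouting step: one must confirm that each preliminary swap preserves the fork conditions on $yz$, $zs_1$, $zs_2$, $s_1t_1$, $s_2t_2$ and keeps a triple of safe colors in the common missing set. The hypothesis, which asserts at least three common missing colors for \emph{every} coloring in $\mathcal{C}^{\Delta}(G-xy)$, provides exactly the flexibility needed to always carry out such disentanglement, and the resulting case analysis parallels (and is somewhat lighter than) the chain manipulations appearing at the beginning of the proof of Lemma~\ref{lfork}.
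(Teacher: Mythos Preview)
The paper does not give its own proof of this lemma. Lemma~\ref{fork} is quoted from \cite{caoaveragedegree2} (Proposition~B / Theorem~2 there), and the paper only adds the remark that the hypothesis ``$|(\overline{\varphi}(x)\cup\overline{\varphi}(y))\cap\overline{\varphi}(t_1)\cap\overline{\varphi}(t_2)|\ge 3$ for all $\varphi$'' plays the same role in that external proof as the original degree condition $\Delta\ge d(x)+d(t_1)+d(t_2)+1$. So there is nothing in this paper to compare your argument against; the intended route is simply ``see \cite{caoaveragedegree2}.''

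On your proposal itself: the overall strategy---force $\varphi'(s_1t_1)=\varphi'(s_2t_2)$ by a $(\gamma_1,\gamma_2)$-swap and then invoke Lemma~\ref{lfork} with a fourth common missing color---is a nice idea, but the argument as written is incomplete in exactly the place you flag. The ``rerouting'' paragraph is a plan, not a proof. Concretely: when $t_1$ and $t_2$ are $(\gamma_1,\gamma_2)$-linked, the swap at $t_1$ flips \emph{both} edges $s_1t_1$ and $s_2t_2$, so you still end with $\varphi'(s_1t_1)\ne\varphi'(s_2t_2)$; your suggested fix (a preliminary $(\alpha,\gamma_2)$-swap at $t_2$) then sends the $s_2t_2$-color to $\alpha$, but that chain may in turn hit $x$, $y$, $z$, $s_1$, or $t_1$, and you have not checked that the fork conditions on $yz$, $zs_1$, $zs_2$ survive---in particular $\varphi(yz)\in\overline{\varphi}(x)$ is more delicate than ``$\in\overline{\varphi}(x)\cup\overline{\varphi}(y)$'', and a swap involving $\gamma_1$ or $\gamma_2$ can change $\varphi(yz)$ to a color in $\overline{\varphi}(y)\setminus\overline{\varphi}(x)$. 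You also need to verify that after each preliminary swap the ``for all $\varphi$'' hypothesis still supplies three safe colors \emph{disjoint from the new $\gamma_1,\gamma_2$}. None of this is obviously false, but it is a genuine case analysis of the same flavor (and comparable length) as the proof of Lemma~\ref{lfork} itself, and it needs to be carried out before the reduction is complete.
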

	
	Note that in [\cite{caoaveragedegree2}, Theorem 2], the condition used was ``$\Delta \geq d(x) + d(t_1) +d(t_2) +1$'', whereas here we use $|(\overline{\varphi}(x)\cup \overline{\varphi}(y))\cap\overline{\varphi}(t_1)\cap\overline{\varphi}(t_2)|\geq 3$ for all $ \varphi \in \mathcal{C}^{\Delta}(G - xy) $. In fact, these two conditions play an equivalent role in the proof of the lemma; one can refer to the Remark 5.1 in [\cite{caoaveragedegree2}, pp. 477].
	
%换个形式引用，考虑原文的remark5.1，并且解释只是形式不同
	
	\begin{lem}[\cite{caoovf2022}]\label{ovfdl}
		Let $G$ be an $n$-vertex $\Delta$-critical graph, and let $a \in V(G)$. If $d(a) \leq \frac{2\Delta-n+2}{3}$, then for each $v \in V(G)\backslash\{a\}$, either $d(v) \geq \Delta-d(a)+1$ or $d(v) \leq n-\Delta+2d(a)-6$. Furthermore, if $d(v) \geq \Delta-d(a)+1$, then for any $b \in N(a)$ with $d(b)=\Delta$ and $\varphi \in \mathcal{C}^{\Delta}(G-ab)$, $|\overline{\varphi}(v) \cap (\overline{\varphi}(a) \cup \overline{\varphi}(b))| \leq 1$.
	\end{lem}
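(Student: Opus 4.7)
The plan is to leverage the multifan/Kierstead toolkit together with the main theorem on short brooms. Fix a neighbor $b \in N(a)$ with $d(b)=\Delta$, which exists by Lemma \ref{VAL} applied to the critical edge $ab$. For any $\varphi \in \mathcal{C}^{\Delta}(G-ab)$ the set $\{a,b\}$ is $\varphi$-elementary, so $|\overline{\varphi}(a)\cup\overline{\varphi}(b)|=\Delta-d(a)+2$, and among the $\Delta-1$ colored edges at $b$ exactly $\Delta-d(a)+1$ of them bear a color lying in $\overline{\varphi}(a)$. The standing hypothesis rewrites as $2\Delta\geq n+3d(a)-2$, and this is the counting inequality I would use repeatedly when intersecting neighborhoods.

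For the \emph{furthermore} statement, assume $d(v)\geq \Delta-d(a)+1$. The aim is to feed a Kierstead path $(a,ab,b,bz,z,zv,v)$ into Lemma \ref{Kierstead path}(b). Counting first yields $|N(b)\cap N(v)|\geq d(b)+d(v)-(n-2)\geq 2d(a)+1$ by the standing inequality; intersecting this set with the $\Delta-d(a)+1$ neighbors $z$ of $b$ for which $\varphi(bz)\in\overline{\varphi}(a)$ gives at least $d(a)+2$ admissible candidates $z\neq a$. For such $z$, if $\varphi(zv)\notin\overline{\varphi}(a)\cup\overline{\varphi}(b)$, a single Kempe change on a $(\varphi(zv),\gamma)$-chain with $\gamma\in\overline{\varphi}(a)$ brings the edge-color of $zv$ into $\overline{\varphi}(a)\cup\overline{\varphi}(b)$ while preserving the multifan structure at $b$; this produces a genuine Kierstead path and Lemma \ref{Kierstead path}(b) then forces $|\overline{\varphi}(v)\cap(\overline{\varphi}(a)\cup\overline{\varphi}(b))|\leq 1$.

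For the dichotomy, I would suppose for contradiction that some $v\in V(G)\setminus\{a\}$ lies in the forbidden window $n-\Delta+2d(a)-5\leq d(v)\leq \Delta-d(a)$. The upper bound gives $|\overline{\varphi}(v)|\geq d(a)$, while the lower bound still forces $|N(b)\cap N(v)|\geq 2d(a)-3$, so for $d(a)\geq 3$ we can select two distinct common neighbors $z_1,z_2$. Each $z_i$ furnishes a Kierstead path ending at $v$, and together they assemble into a short broom based at the critical edge $ab$ (with center a common neighbor, $v$ appearing as one of the $v_i$). Theorem \ref{ne-broom} bounds the multiplicity function $\sum_\alpha m_{\varphi,B}(\alpha)\leq 1$ on this broom; on the other hand, the $d(a)$ missing colors of $v$ must distribute over $\overline{\varphi}(a)\cup\overline{\varphi}(b)$ and $\varphi(a)\cap\varphi(b)$, and in the latter case the fork bound of Lemma \ref{fork} (applied to $s_1,s_2,t_1,t_2$ obtained by collapsing the two Kierstead paths) rules out three repeated missing colors. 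Counting the number of repetitions of missing colors at $V(B)$ forced by $|\overline{\varphi}(v)|\geq d(a)$ then contradicts the broom bound, and it is precisely the $-6$ slack in the inequality $d(v)\leq n-\Delta+2d(a)-6$ that is needed to absorb the few exceptional colors allowed by Theorem \ref{ne-broom} and Lemma \ref{Kierstead path}(b). The main obstacle I expect is arranging the Kempe changes consistently: a swap that legitimizes the Kierstead path through $z_1$ may ruin the one through $z_2$, so the proof must be set up as a fork in the sense of Lemma \ref{fork} so that a single uniform coloring $\varphi$ supports both paths, and only then can the short-broom inequality be invoked cleanly.
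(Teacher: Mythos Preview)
The paper does not prove Lemma~\ref{ovfdl} itself (it is cited from \cite{caoovf2022}), but it proves the refinement Lemma~\ref{ovfdlc} by an argument that applies verbatim to Lemma~\ref{ovfdl}. That argument is far simpler than what you propose, and your proposal contains real structural errors.

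\textbf{The paper's approach.} Fix $b\in N(a)$ with $d(b)=\Delta$ and $\varphi\in\mathcal{C}^\Delta(G-ab)$; set $k=d(a)$. For the dichotomy, take $v$ in the forbidden window and count $|N(v)\cap (N(b)\setminus\{a\})|\geq 2k-3$ (with small adjustments when $bv\in E(G)$). Since only $k-2$ colors lie outside $\overline{\varphi}(a)\cup\overline{\varphi}(b)$, at most $k-2$ common neighbors $u$ can have $\varphi(bu)\notin\overline{\varphi}(a)\cup\overline{\varphi}(b)$, and at most $k-2$ can have $\varphi(vu)\notin\overline{\varphi}(a)\cup\overline{\varphi}(b)$. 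Hence some common neighbor $u$ has \emph{both} $\varphi(bu)$ and $\varphi(vu)$ in $\overline{\varphi}(a)\cup\overline{\varphi}(b)$, so $K=(a,ab,b,bu,u,uv,v)$ is a Kierstead path outright. As $d(v)\leq\Delta-k$ gives $|\overline{\varphi}(v)\cap(\overline{\varphi}(a)\cup\overline{\varphi}(b))|\geq 2$, Lemma~\ref{Kierstead path}(b) is contradicted. The ``furthermore'' clause is proved by exactly the same pigeonhole. No Kempe changes, no brooms, no forks.

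\textbf{Where your proposal goes wrong.} First, the Kempe change you propose in the ``furthermore'' part is unnecessary and unjustified: swapping on a $(\varphi(zv),\gamma)$-chain may alter $\overline{\varphi}(a)$, $\overline{\varphi}(b)$, or the color of $bz$, so you cannot claim the Kierstead structure survives without a case analysis you do not supply. The paper sidesteps this entirely by the double pigeonhole above, which hands you a $u$ with both edges already correctly colored. Second, and more seriously, your broom construction for the dichotomy is structurally impossible. A short broom has a \emph{single} pivot $z$ adjacent to several leaves $v_1,\dots,v_p$; you instead produce two paths through \emph{different} middle vertices $z_1,z_2$ meeting at the same terminal $v$. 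That configuration is not a short broom, so Theorem~\ref{ne-broom} does not apply. Likewise, the fork of Lemma~\ref{fork} requires two distinct terminal vertices $t_1,t_2$, whereas you have only one vertex $v$; the lemma cannot be invoked as you describe. The entire heavy machinery (brooms, forks, repetition counting) is a detour: a single $4$-vertex Kierstead path obtained by pure counting already yields the contradiction.
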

	
	We present a refinement of Lemma \ref{ovfdl} as follows.
	
	\begin{lem}\label{ovfdlc}
		Let $G$ be an $n$-vertex $\Delta$-critical graph, and let $a \in V(G)$. If $d(a) \leq \frac{2\Delta-n+5}{3}$, then for each $v \in V(G)\backslash\{a\}$, either $d(v) \geq \Delta-d(a)+1$ or $d(v) \leq n-\Delta+2d(a)-6$. Furthermore, the following holds.
		\begin{itemize}
			\item [(i)]  If $d(v) \geq \Delta-d(a)+1$, then for any $b \in N(a)$ with $d(b)=\Delta$ and $\varphi \in \mathcal{C}^{\Delta}(G-ab)$, $|\overline{\varphi}(v) \cap (\overline{\varphi}(a) \cup \overline{\varphi}(b))| \leq 1$.
			\item [(ii)] If there exist two distinct vertices $v_1, v_2 \in V(G)\backslash\{a\}$ with $d(v_1)=d(v_2)=n-\Delta+2d(a)-6$, then $v_1$ is adjacent to $v_2$.
		\end{itemize}
	\end{lem}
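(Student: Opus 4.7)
The plan is to mirror the proof of Lemma~\ref{ovfdl} from~\cite{caoovf2022}, replacing every appeal to the Kierstead-path bound (Lemma~\ref{Kierstead path}(b)) with the stronger short-broom bound of Theorem~\ref{ne-broom}. The latter's bound $\sum_\gamma m_{\varphi,B}(\gamma)\leq 1$ is exactly one unit sharper than what the Kierstead-path bound provides, which should buy the extra slack $(2\Delta-n+5)/3$ in place of $(2\Delta-n+2)/3$ and enable the new adjacency conclusion~(ii).

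The setup is standard: VAL (Lemma~\ref{VAL}) together with $d(a)<\Delta$ yields a neighbor $b\in N(a)$ with $d(b)=\Delta$, so $ab$ is critical; fix $\varphi\in\mathcal{C}^\Delta(G-ab)$. Lemma~\ref{multifan} applied to the multifan $(a,ab,b)$ shows $\{a,b\}$ is $\varphi$-elementary and $|\overline{\varphi}(a)\cup\overline{\varphi}(b)|=\Delta-d(a)+2$. Inclusion--exclusion in $[1,\Delta]$ yields
\[
\bigl|\overline{\varphi}(v)\cap(\overline{\varphi}(a)\cup\overline{\varphi}(b))\bigr|\ \geq\ \Delta-d(v)-d(a)+2
\]
for any $v\in V(G)\setminus\{a,b\}$, which is at least $2$ when $d(v)\leq\Delta-d(a)$. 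Both the dichotomy and~(i) reduce to a single claim: if $v\neq a,b$ has at least two common missing colors with $\overline{\varphi}(a)\cup\overline{\varphi}(b)$, then $d(v)\leq n-\Delta+2d(a)-6$. I would pick two such colors $\alpha,\beta$ and, via Kempe-swap manipulations patterned on the proof of Theorem~\ref{ne-broom}, reach a critical coloring in which $v$ is a leaf of a short broom $B$ rooted at $ab$. Theorem~\ref{ne-broom} then forces $\sum_\gamma m_{\varphi,B}(\gamma)\leq 1$, and the VAL-based degree count of~\cite{caoovf2022}, now exploiting this sharper bound, pushes enough $\Delta$-vertices into $N(v)$ to force $d(v)\leq n-\Delta+2d(a)-6$.

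For part~(ii), suppose $v_1,v_2\in V(G)\setminus\{a\}$ are distinct, both of degree $n-\Delta+2d(a)-6$, and $v_1v_2\notin E(G)$. The key quantitative observation, using $3d(a)\leq 2\Delta-n+5$, is that
\[
|\overline{\varphi}(a)|+|\overline{\varphi}(b)|+|\overline{\varphi}(v_1)|+|\overline{\varphi}(v_2)|\ =\ 5\Delta-5d(a)-2n+14,
\]
so if one can realize a short broom $B$ with $V(B)=\{a,b,z,v_1,v_2\}$ and $d(z)=\Delta$, then the identity $\sum_\gamma m_{\varphi,B}(\gamma)=\sum_{w\in V(B)}|\overline{\varphi}(w)|-|\overline{\varphi}(V(B))|$ together with $|\overline{\varphi}(V(B))|\leq\Delta$ yields $\sum_\gamma m_{\varphi,B}(\gamma)\geq 4\Delta-5d(a)-2n+14\geq 2$, contradicting Theorem~\ref{ne-broom}. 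The remaining task is therefore to produce such a broom: locate a $\Delta$-vertex $z$ adjacent to (at least) one of $a,b$ and to both $v_1,v_2$, and perform Kempe swaps to align $\varphi(zv_1),\varphi(zv_2)$ with the short-broom color condition.

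The principal obstacle is part~(ii): the degree-sum lower bound on $|(N(a)\cup N(b))\cap N(v_1)\cap N(v_2)|$ is positive in the denser part of the parameter range but can fail for small $d(a)$. In that edge case I would fall back on Lemma~\ref{fork} or Lemma~\ref{lfork}, using two Kierstead paths through distinct intermediate vertices $s_1,s_2$ (with $s_1v_1$ and $s_2v_2$ as their last edges) to derive the same contradiction from the richer fork/two-path structure those lemmas provide.
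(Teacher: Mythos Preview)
Your proposal rests on two misreadings of what drives the improvement, and both lead to genuine gaps.

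For the dichotomy and part~(i), Theorem~\ref{ne-broom} applied to a broom with a single leaf $v$ (that is, $p=1$) gives nothing beyond Lemma~\ref{Kierstead path}(b): since $\{a,b,z\}$ is multifan-elementary and $\overline{\varphi}(z)=\emptyset$ when $d(z)=\Delta$, the quantity $\sum_\gamma m_{\varphi,B}(\gamma)$ equals $|\overline{\varphi}(v)\cap(\overline{\varphi}(a)\cup\overline{\varphi}(b))|$ exactly, so the ``one unit sharper'' claim is false in this setting. The paper does not use Theorem~\ref{ne-broom} here at all. The relaxation from $(2\Delta-n+2)/3$ to $(2\Delta-n+5)/3$ comes from a tighter pigeonhole count: observe $b,v\notin N(b)\cap N(v)$, so $|N(v)\cap(N(b)\setminus\{a\})|\geq d(v)+(\Delta-1)-(n-3)-|N(b)\cap\{v\}|-|N(v)\cap\{b\}|$, and then handle the case $bv\in E(G)$ separately (Lemma~\ref{multifan} forces $\varphi(bv)\in\varphi(a)\cap\varphi(b)$, freeing one forbidden colour on each side). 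This refined count produces the Kierstead path $(a,ab,b,bu,u,uv,v)$ whenever $n-\Delta+2k-5\leq d(v)\leq\Delta-k$, and Lemma~\ref{Kierstead path}(b) finishes.

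For part~(ii), your plan to locate a common $z\in N(b)\cap N(v_1)\cap N(v_2)$ fails numerically: inclusion--exclusion only gives $|N(b)\cap N(v_1)\cap N(v_2)|\geq 4k-\Delta-12$, which is negative for small $k$, and the fork fallback is left unspecified. The paper's argument is entirely different and needs no common neighbour. At the boundary value $d(v)=n-\Delta+2k-6$ the count above becomes an \emph{equality}: any extra common neighbour of $b$ and $v$ would yield a forbidden Kierstead path, so $|N(v)\cap(N(b)\setminus\{a\})|=2k-4-|N(b)\cap\{v\}|-|N(v)\cap\{b\}|$ exactly. Equality in inclusion--exclusion forces $N(v)\cup(N(b)\setminus\{a\})\supseteq V(G)\setminus\{a,b,v\}$, and tightness of the colour count forces every $b_\alpha$ with $\alpha\in\varphi(a)\cap\varphi(b)$ to lie in $N(v)\cup\{v\}$. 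Adjacency of $v_1,v_2$ then follows at once: if some $v_i\notin N(b)$, the covering property applied to the other $v_j$ gives $v_i\in N(v_j)$; if both $v_i\in N(b)$, then $\varphi(bv_2)\in\varphi(a)\cap\varphi(b)$, so $v_2=b_{\varphi(bv_2)}\in N(v_1)$. This tightness-to-structure step is the idea you are missing.
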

	
	\begin{proof}
		Let $k = d(a)$. Suppose, for contradiction, that there exists $v \in V(G)\backslash\{a\}$ such that $n-\Delta+2k-5 \leq d(v) \leq \Delta-k$. By Lemma \ref{VAL} (VAL), we can choose $b \in N(a)$ with $d(b) = \Delta$ and a coloring $\varphi \in \mathcal{C}^{\Delta}(G-ab)$.
		
		Since $d(b) = \Delta$, we have $|N(b)\backslash\{a\}| = \Delta-1$. Given $d(v) \leq \Delta-k$, VAL implies $a \notin N(v)$. Noting that $b, v \notin N(b) \cap N(v)$ and $d(v) \geq n-\Delta+2k-5$, we derive
		\[
		\begin{aligned}
			|N(v) \cap (N(b)\backslash \{a\})| &= |(N(v) \cap (N(b)\backslash \{a\})) \backslash \{b, v\}| \\
			&\geq |N(v)| + |N(b)\backslash \{a\}| - (n-3) - |N(b) \cap \{v\}| - |N(v) \cap \{b\}| \\
			&\geq 2k-3 - |N(b) \cap \{v\}| - |N(v) \cap \{b\}|.
		\end{aligned}
		\]
		
		Observe that $|[1, \Delta] \backslash (\overline{\varphi}(a) \cup \overline{\varphi}(b))| = k-2$. If $bv \notin E(G)$, there exists a vertex $u \in N(v) \cap (N(b)\backslash\{a\})$ such that $\varphi(bu), \varphi(vu) \in \overline{\varphi}(a) \cup \overline{\varphi}(b)$. If $bv \in E(G)$, then since $d(v) \leq \Delta-k$, Lemma \ref{multifan} implies $\varphi(bv) \notin \overline{\varphi}(a)$, hence $\varphi(bv) \notin \overline{\varphi}(a) \cup \overline{\varphi}(b)$. Therefore, there are at most $2(k-3)$ edges between $\{b, v\}$ and $N(v) \cap (N(b)\backslash\{a\})$ colored with colors from $[1, \Delta] \backslash (\overline{\varphi}(a) \cup \overline{\varphi}(b))$. Thus, we can again find a vertex $u \in N(v) \cap (N(b)\backslash\{a\})$ such that $\varphi(bu), \varphi(vu) \in \overline{\varphi}(a) \cup \overline{\varphi}(b)$.
		
		Consequently, $K=(a, ab, b, bu, u, uv, v)$ forms a Kierstead path. Since $d(v) \leq \Delta-k$, we have $|\overline{\varphi}(v)| \geq k$, which implies
		\[
		|\overline{\varphi}(v) \cap (\overline{\varphi}(a) \cup \overline{\varphi}(b))| \geq 2,
		\]
		contradicting Lemma \ref{Kierstead path} (b).
		
		For $(i)$, since $|N(b)\backslash\{a\}| = \Delta-1$ and $d(v) \geq \Delta-k+1$, we have
		\[
		\begin{aligned}
			|N(v) \cap (N(b)\backslash \{a\})| &= |N(v)| + |N(b)\backslash \{a\}| - |N(v) \cup (N(b)\backslash \{a\})| \\
			&\geq 2 \Delta-k - (n-3 +|N(b) \cap \{v\}| + |N(v) \cap \{b\}| + |N(v) \cap \{a\}|) \\
			&\geq 2 \Delta-k - (n-2) - |N(b) \cap \{v\}| - |N(v) \cap \{b\}| \\
			&\geq 2k-3 - |N(b) \cap \{v\}| - |N(v) \cap \{b\}|.
		\end{aligned}
		\]
		where the last inequality follows from $k = d(a) \leq \frac{2\Delta-n+5}{3}$. The remainder of the proof follows the same reasoning as the first part and is therefore omitted.
		
		For $(ii)$, similar to the first part of the proof, we have for any $v\in V(G)\backslash \{a\}$ with $d(v)=n-\Delta+2k-6$,
		\[
		\begin{aligned}
			|N(v) \cap (N(b)\backslash \{a\})| &= |N(v)| + |N(b)\backslash \{a\}|-|N(v)\cup (N(b)\backslash \{a\})| \\
			&\geq |N(v)| + |N(b)\backslash \{a\}| - (n-3) - |N(b) \cap \{v\}| - |N(v) \cap \{b\}| \\
			&\geq 2k-4 - |N(b) \cap \{v\}| - |N(v) \cap \{b\}|.
		\end{aligned}
		\]
		Since $d(v)=n-\Delta+2k-6<n-\Delta+2k-5\leq \Delta-k$ and $|\overline{\varphi}(a)\cup \overline{\varphi}(b)|=\Delta-k+2$, $|\overline{\varphi}(v)\cap(\overline{\varphi}(a)\cup \overline{\varphi}(b))|\geq 2$. Then $|N(v) \cap (N(b)\backslash \{a\})|\leq 2k-4 - |N(b) \cap \{v\}| - |N(v) \cap \{b\}|$; otherwise, there is a $u \in N(v) \cap (N(b)\backslash\{a\})$ with $\varphi(bu), \varphi(vu) \in \overline{\varphi}(a) \cup \overline{\varphi}(b)$ such that $K=(a, ab, b, bu, u, uv, v)$ forms a Kierstead path, contradicting Lemma \ref{Kierstead path} (b). Hence, $|N(v) \cap (N(b)\backslash \{a\})|= 2k-4 - |N(b) \cap \{v\}| - |N(v) \cap \{b\}|$, which implies that $|N(v)\cup (N(b)\backslash \{a\})|= n-3+|N(b) \cap \{v\}| + |N(v) \cap \{b\}|$. 
		In other words, for any $u\in V(G)\backslash \{a, b, v\}$, we have $u \in N(b)$ or $u\in N(v)$. 
		Moreover, if $v\in N(b)$, then $\varphi(bv)\in\varphi(a)\cap\varphi(b)$ and there are exactly $2(k-3)$ edges between $\{b , v\}$ and $N(v)\cap(N(b)\backslash \{a\})$ colored by colors from $\varphi(a)\cap\varphi(b)$; if $v\notin N(b)$, then there are exactly $2(k-2)$ edges between $\{b , v\}$ and $N(v)\cap(N(b)\backslash \{a\})$ colored by colors from $\varphi(a)\cap\varphi(b)$. 
		Thus, for any $\alpha \in \varphi(a)\cap\varphi(b)$, we have either $b_\alpha \in N(v)\cap N(b)$ or $b_\alpha =v$ where $b_\alpha$ denote the other end vertex of the edge $bb_\alpha$ with $\varphi(bb_\alpha)=\alpha$.
		
		Now, if there exists one of $v_1$ and $v_2$ is not adjacent to $b$, then $v_1$ and $v_2$ are adjacent since for any $i\in\{1, 2\}$ and $u\in V(G)\backslash \{a, b, v_i\}$, we have $u \in N(b)$ or $u\in N(v_i)$. 
		Suppose that $v_1, v_2\in N(b)$. Let $\gamma = \varphi(bv_2) \in \varphi(a) \cap \varphi(b)$. Since $d(v_1)=n-\Delta+2k-6$, we have for any $\alpha \in \varphi(a)\cap\varphi(b)$, either $b_\alpha \in N(v_1)\cap N(b)$ or $b_\alpha =v_1$, which implies that $b_\gamma \in N(v_1) \cap N(b)$ or $b_\gamma = v_1$. Since $b_\gamma = v_2$ and $v_1 \neq v_2$, it follows that $v_2 \in N(v_1) \cap N(b)$.
		Thus, in all cases, $v_1$ and $v_2$ are adjacent.
	\end{proof}

%We also have $v_2\notin N(b)$; otherwise, we obtain $\varphi(bv_2)\in \varphi(a)\cup\varphi(b)$, contradicts the assumption that $v_1$ is not adjacent to $v_2$. We again get a contradiction because $v_2 \in V(G)\backslash  N[b]$, $|V(G)\backslash  N[b]|=n-\Delta-1$ and $N(v_1)\backslash  (N(b)\backslash  \{a\})\subseteq V(G)\backslash  N[b]$.

%这里这个引理可以考虑针对下面定理的第一个情况进行细分，下面不等式可以试着调节含有x,y'等顶点。                                                                                                                                               
\overfullthm*
%	\begin{thm}[Theorem \ref{overfull thm}]
%		Let $G$ be a $\Delta$-critical graph of order $n$. If $\Delta(G)-3\delta(G) \geq \frac{2n}{3} - 6$, then $G$ is overfull.
%	\end{thm}
	
	We prove this Theorem using the approach developed by \cite{caoovf2022}. For readers convenience, we still give its proof here.
	
	\begin{proof}
		Following the result of Chetwynd and Hilton \cite{ChetwyndHiltonStar1988}, who established the overfull conjecture for graphs with $\Delta \geq n-3$, we restrict our analysis to the case $\Delta \leq n-4$. 
		Then $\Delta(G)\geq\frac{2n+5\delta(G)-12}{3}$ implies that $\delta(G) \leq \frac{n}{5}$. As $\delta(G) \geq 2$, we deduce $n \geq 10$.
		
		Let $ x \in V(G) $ with $ d(x) = \delta(G) = k $. Since $k \geq 2$, $n \geq 10$ and $\Delta(G) \geq \frac{2n+5k-12}{3}$, we have $\frac{2\Delta-n+5}{3} \geq \frac{n+10k-9}{9}=k+\frac{n+k}{9}-1 > k$ and $\Delta>k$. Applying Lemma \ref{ovfdlc}, for every $v \in V(G)\backslash\{x\}$, we have either $d(v) \geq \Delta-k+1$ or $d(v) \leq (n-\Delta)+2k-6$. By Lemma \ref{VAL} (VAL), there exists $ y \in N(x) $ such that $ d(y) = \Delta $. Let $\varphi \in \mathcal{C}^{\Delta}(G-xy)$. We then consider the following two cases.
		
		\textbf{Case 1:} There exist $ t_1, t_2 \in V(G) \backslash \{x\} $ with $ d(t_1), d(t_2) \leq n - \Delta + 2k - 6 $.
		
		Since $ d(x) = k = \delta(G)$ and $|[1, \Delta] \backslash (\overline{\varphi}(x) \cup \overline{\varphi}(y))| = k - 2$, there are at least two edges incident to each of $t_1$ and $t_2$ colored by colors from $\overline{\varphi}(x) \cup \overline{\varphi}(y)$. Thus, there exist two distinct vertices $s_1 \in N(t_1) $ and $s_2 \in N(t_2) $ such that $ \varphi(s_1t_1), \varphi(s_2t_2) \in \overline{\varphi}(x) \cup \overline{\varphi}(y)$. Recall that $G$ is $\Delta$-critical and $s_i \in N(t_i)$. Then it follows from  $\Delta(G)\geq \frac{2n+5k-12}{3}$ and $k \geq 2$ that $d(s_i) \geq \Delta + 2 - (n - \Delta + 2k - 6)> n - \Delta + 2k - 6$. 
		By Lemma \ref{ovfdlc}, we have $d(s_i) \geq \Delta -k+1$ for $i=1, 2$. Thus, since $y \notin N(y)$ and $s_1, s_2 \notin N(s_1) \cap N(s_2)$, we get 
		\begin{equation}\label{neq-1}
			\begin{aligned}
				& |N(y) \cap N(s_1) \cap N(s_2)|\\
				= & |N(y)|+| N(s_1) \cap N(s_2)|-|N(y) \cup (N(s_1) \cap N(s_2))|\\
				\geq & \Delta + | N(s_1) \cap N(s_2)|-\left(n-3+|N(y) \cap \{s_1, s_2\}|+|\{ y\}\cap N(s_1) \cap N(s_2)|\right)\\
				\geq & \Delta + 2(\Delta -k + 1)- n -\left(n-5+|N(y) \cap \{s_1, s_2\}|+|\{x, y, y'\}\cap N(s_1) \cap N(s_2)|\right)\\
				\geq & 3\Delta-2k-2n + 7 -|N(s_1) \cap \{x, y, y'\}|-|N(s_2) \cap \{x, y, y'\}|-|N(y) \cap \{s_1, s_2\}|\\
				\geq & (k-2-|N(s_1) \cap \{x, y, y'\}|) + (k-2-|N(s_2) \cap \{x, y, y'\}|) + \\
				& (k-2-|N(y) \cap \{s_1, s_2\}|) + 1,
			\end{aligned}
		\end{equation}
		where the last inequality follows from $\Delta \geq \frac{2n+5k-12}{3}$. 
%		Next we claim that $K=(x, xy,$ $y, yz, z,$ $zs_1, s_1, s_1t_1, t_1)$ and $K^*=(x, xy, y, yz, z, zs_2, s_2, s_2t_2, t_2)$ are two Kierstead paths with respect to $xy$ and $\varphi$. 
		Since $d(s_i) \geq \Delta-k+1>k$ and $d(t_i) \leq n-\Delta+2k-6 \leq \Delta -k$, we have $s_i \neq x$ and $|(\overline{\varphi}(x) \cup \overline{\varphi}(y))\cap\overline{\varphi}(t_i)|\geq 2$ under any $\Delta$-coloring of $G-xy$. 		  
		Therefore, if $s_iy \in E(G)$, then $\varphi(s_iy) \notin \overline{\varphi}(x)$ by Lemma \ref{Kierstead path} $(b)$, which implies that $\varphi(s_iy) \notin \overline{\varphi}(x) \cup \overline{\varphi}(y)$. 
		If $s_ix \in E(G)$, then $\varphi(s_ix) \notin \overline{\varphi}(y)$ by Lemma \ref{multifan}, which implies that $\varphi(s_ix) \notin \overline{\varphi}(x) \cup \overline{\varphi}(y)$. 
		Let $y' \in N(x)$ with $\varphi(xy') = \overline{\varphi}(y)$. By considering the coloring $\varphi_1$ obtained from $\varphi$ by uncoloring $xy'$ and coloring $xy$ by $\varphi(xy')$, we have $\overline{\varphi}(x)\cup \overline{\varphi}(y)\subseteq\overline{\varphi_1}(x)\cup\overline{\varphi_1}(y')$ due to $|\overline{\varphi}(y)|=1$, and so
		if $s_iy' \in E(G)$, then $\varphi(s_iy') \notin \overline{\varphi}(x)$ by Lemma \ref{Kierstead path} $(b)$, which implies that  $\varphi(s_iy') \notin \overline{\varphi}(x) \cup \overline{\varphi}(y)$.
%		Recall that $k<\Delta$ and $\overline{\varphi}(s_i) \cap (\overline{\varphi}(x) \cup \overline{\varphi}(y)) \neq \emptyset$.
		Since $|[1, \Delta] \backslash (\overline{\varphi}(x) \cup \overline{\varphi}(y))|=k-2$, the inequality (\ref{neq-1}) implies that there exists $z \in N(y) \cap N(s_1) \cap N(s_2)$ such that $\varphi(yz), \varphi(zs_1), \varphi(zs_2) \in \overline{\varphi}(x) \cup \overline{\varphi}(y)$. Since both $\{x, y, t_i\}$ and $\{x, y, s_i, t_i\}$ are not elementary, we obtain that $z \notin \{x, t_1, t_2\}$ by $\varphi(s_it_i) \in \overline{\varphi}(x) \cup \overline{\varphi}(y)$ (for $i \in [1,2]$), Lemmas \ref{multifan} and \ref{Kierstead path}.
		Thus, $K=(x, xy, y, yz, z,$ $zs_1, s_1, s_1t_1, t_1)$ and $K^*=(x, xy, y, yz, z, zs_2, s_2, s_2t_2, t_2)$ are two Kierstead paths.
		
		If there exist $ s_1 \in N(t_1) $ and $ s_2 \in N(t_2) $ such that $ \varphi(s_1t_1), \varphi(s_2t_2) \in \overline{\varphi}(x) \cup \overline{\varphi}(y) $, and $ \varphi(s_1t_1) = \varphi(s_2t_2) $, then let $ \Gamma = \overline{\varphi}(t_1) \cap \overline{\varphi}(t_2) $. 
		Since $ |[1, \Delta] \backslash (\overline{\varphi}(x) \cup \overline{\varphi}(y))| = k - 2 $, we have $ |\Gamma \cap (\overline{\varphi}(x) \cup \overline{\varphi}(y))| \geq |\Gamma| - k + 2 $. 
		If $d(t_1) = d(t_2)=n-\Delta+2k-6$, then by Lemma \ref{ovfdlc} $(ii)$, $t_1$ is adjacent to $t_2$ which implies that $|\varphi(t_1)\cap\varphi(t_2)|\geq |\varphi(s_1t_1)|+|\varphi(t_1t_2)|=2$.
		Then, by the degree condition of $t_1, t_2$, we obtain $ |\Gamma| = \Delta-|\varphi(t_1)\cup\varphi(t_2)| \geq \Delta - 2(n - \Delta + 2k - 6) + 2 = 3\Delta - 2n - 4k + 14$. 
		Given $\Delta \geq \frac{2n+5k-12}{3}$, we have:
		\[
		|\Gamma \cap (\overline{\varphi}(x) \cup \overline{\varphi}(y))| \geq |\Gamma| - k + 2 \geq 3\Delta - 2n - 5k + 16= 4 > 3,
		\]
		which contradicts Lemma \ref{lfork}.
		
		Thus, for any $ s_1 \in N(t_1) $ and $ s_2 \in N(t_2) $ with $ \varphi(s_1t_1), \varphi(s_2t_2) \in \overline{\varphi}(x) \cup \overline{\varphi}(y) $, it must hold that $ \varphi(s_1t_1) \neq \varphi(s_2t_2) $. Consequently, $ \varphi(s_1t_1) \in \overline{\varphi}(t_2) $ and $ \varphi(s_2t_2) \in \overline{\varphi}(t_1) $. Hence, the subgraph $ H^* $ with $ V(H^*) = \{x, y, z, s_1, s_2, t_1, t_2\} $ is a fork. Again, by Lemma \ref{ovfdlc} $(ii)$, we have $|\varphi(t_1)\cap\varphi(t_2)|\geq |\varphi(t_1t_2)|=1$, and
		\[
		\begin{aligned}
		&|(\overline{\varphi}(x)\cup \overline{\varphi}(y))\cap\overline{\varphi}(t_1)\cap\overline{\varphi}(t_2)|\\
		\geq & |\overline{\varphi}(x)\cup \overline{\varphi}(y)|-(\Delta-|\overline{\varphi}(t_1)\cap\overline{\varphi}(t_2)|)\\
		= & |\overline{\varphi}(x)\cup \overline{\varphi}(y)|-|\varphi(t_1)\cup \varphi(t_2)|\\
		= & |\overline{\varphi}(x)\cup \overline{\varphi}(y)|-|\varphi(t_1)|-|\varphi(t_2)|+|\varphi(t_1)\cap \varphi(t_2)|\\
		\geq & \Delta -k+2-2(n-\Delta+2k-6)+1\\
		\geq & 3,
		\end{aligned}
		\]
		where the last inequality follows from $\Delta \geq \frac{2n+5k-12}{3}$, contradicting Lemma \ref{fork}.
		
		\textbf{Case 2:} There exists at most one $ t \in V(G) \backslash \{x\} $ with $ d(t) \leq n - \Delta + 2k - 6 $.
		
		Suppose to the contrary that $G$ is not overfull. Then there exist $u, v \in V(G)\backslash \{x\}$ such that $$
		\Delta-k+1 \leq d(u), d(v)<\Delta,\ \overline{\varphi}(u) \cap (\overline{\varphi}(x) \cup \overline{\varphi}(y)) \neq \emptyset,\ \overline{\varphi}(v) \cap (\overline{\varphi}(x) \cup \overline{\varphi}(y)) \neq \emptyset\ (\ast\ast)
		$$(see [\cite{caoovf2022}, p. 2265] for existence verification). For the reader's convenience, a detailed verification of this fact is provided in Claim \ref{u v exit} at the end of this proof.

		Note that for any $w \in \{u, v\}$, by $\overline{\varphi}(w) \cap (\overline{\varphi}(x) \cup \overline{\varphi}(y)) \neq \emptyset$ and Lemma \ref{Kierstead path}, we have if $yw \in E(G)$, then $\varphi(yw) \notin \overline{\varphi}(x) \cup \overline{\varphi}(y)$. 
		If $wx \in E(G)$, then $\varphi(wx) \notin \overline{\varphi}(y)$ by Lemma \ref{multifan}, which implies that $\varphi(wx) \notin \overline{\varphi}(x) \cup \overline{\varphi}(y)$. 
		Recall that $d(x)=k<\Delta=d(y)$ and $\overline{\varphi}(w) \cap (\overline{\varphi}(x) \cup \overline{\varphi}(y)) \neq \emptyset$. Let $y' \in N(x)$ with $\varphi(xy') = \overline{\varphi}(y)$.
		If $wy' \in E(G)$, then $\varphi(wy') \notin \overline{\varphi}(x)$ by Lemma \ref{Kierstead path} $(a)$, which implies  that $\varphi(wy') \notin \overline{\varphi}(x) \cup \overline{\varphi}(y)$. Since $y, u, v \notin \{N(y) \cap N(u) \cap N(v)\}$, we have
		\[
		\begin{aligned}
			& |N(y) \cap N(u) \cap N(v)|\\
			\geq & |N(y)|+|N(u) \cap N(v)|-(n-3+|\{y\} \cap N(u) \cap N(v)|+|\{u, v\} \cap N(y)|)\\
			\geq & d(u)+d(v)-2n+\Delta+5-|N(u) \cap \{x, y, y'\}|-|N(v) \cap \{x, y, y'\}|-|N(y) \cap \{u, v\}|\\
			\geq & 3\Delta-2k-2n+7-|N(u) \cap \{x, y, y'\}|-|N(v) \cap \{x, y, y'\}|-|N(y) \cap \{u, v\}|\\
			\geq & (k-2-|N(u) \cap \{x, y, y'\}|) + (k-2-|N(v) \cap \{x, y, y'\}|) + \\
			& (k-2-|N(y) \cap \{u,v\}|)+1,
		\end{aligned}
		\]
		where the last inequality follows from $\Delta \geq \frac{2n+5k-12}{3}$. By the inequality above and the same argument as in Case 1, we can find $z \in N(y) \cap N(u) \cap N(v)$ such that both 
		\[
		K = (x, xy, y, yz, z, zu, u) \ \mbox{and}\ K^* = (x, xy, y, yz, z, zv, v)
		\]
		are Kierstead paths with respect to $xy$ and $\varphi$. Recall that  $\max\{d(u), d(v)\} < \Delta$. So we get a contradiction to Lemma \ref{tkpl}.
		
		\begin{claim}\label{u v exit}[\cite{caoovf2022}, p. 2265]
			If $G$ is not overfull, then there exist two distinct vertices $u, v \in V(G) \backslash \{x\}$ such that $\Delta-k+1 \leq d(u), d(v)<\Delta$, $\overline{\varphi}(u) \cap (\overline{\varphi}(x) \cup \overline{\varphi}(y)) \neq \emptyset$ and $\overline{\varphi}(v) \cap (\overline{\varphi}(x) \cup \overline{\varphi}(y)) \neq \emptyset$.
		\end{claim}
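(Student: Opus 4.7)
The plan is to argue by contradiction: suppose that at most one vertex $u_0 \in V(G)\setminus\{x\}$ simultaneously satisfies $\Delta - k + 1 \leq d(u_0) < \Delta$ and $\overline{\varphi}(u_0) \cap (\overline{\varphi}(x) \cup \overline{\varphi}(y)) \neq \emptyset$, and deduce that $G$ must be overfull, contradicting the hypothesis. Partition $V(G) \setminus \{x\}$ into the set of $\Delta$-vertices, the at-most-one low-degree vertex $t$ with $d(t) \leq n - \Delta + 2k - 6$ from the Case~2 hypothesis, and $V_{\mathrm{mid}} := \{v : \Delta - k + 1 \leq d(v) < \Delta\}$. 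The contradiction hypothesis forces $\overline{\varphi}(v) \subseteq \varphi(x) \cap \varphi(y)$ for every $v \in V_{\mathrm{mid}} \setminus \{u_0\}$, so $|\overline{\varphi}(v)| \leq |\varphi(x) \cap \varphi(y)| = k - 2$ and $d(v) \geq \Delta - k + 2$. Furthermore, Lemma~\ref{ovfdlc}(i) applied with $a = x$, $b = y$ (valid because $\Delta \geq (2n+5k-12)/3$ implies $k \leq (2\Delta - n + 5)/3$) gives $|\overline{\varphi}(v) \cap (\overline{\varphi}(x) \cup \overline{\varphi}(y))| \leq 1$ for every $v \in V_{\mathrm{mid}}$.

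Because no graph on an even number of vertices can be overfull, the conclusion of Theorem~\ref{overfull thm} forces $n$ to be odd, so ``$G$ not overfull'' translates to $2|E(G)| \leq (n-1)\Delta$. The degree lower bounds above combine to yield
\[
2|E(G)| \geq k + (n - 1)\Delta - |V_{\mathrm{low}}|(\Delta - k) - |V_{\mathrm{mid}}|(k - 2) - e,
\]
where $e \in \{0, 1\}$ indicates whether $u_0$ exists. Comparing with $2|E(G)| \leq (n-1)\Delta$ produces the necessary condition $k \leq |V_{\mathrm{low}}|(\Delta - k) + |V_{\mathrm{mid}}|(k - 2) + e$. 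This inequality is violated (hence yielding the sought contradiction) already in the regime $|V_{\mathrm{low}}| = 0$ with small $|V_{\mathrm{mid}}|$ (for instance $|V_{\mathrm{mid}}| \leq 1$, or all $|V_{\mathrm{mid}}|$ when $k = 2$).

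The main obstacle is closing out the residual cases where $|V_{\mathrm{low}}| = 1$ or $|V_{\mathrm{mid}}|$ is large. For $|V_{\mathrm{low}}| = 1$, the low-degree vertex $t$ has $|\overline{\varphi}(t)| \geq (n + 4k - 6)/3 > k - 2$, so $t$ misses many colors in $\overline{\varphi}(x) \cup \overline{\varphi}(y)$; a Kierstead-path argument analogous to the main Case~2 analysis then produces a second mid-range vertex missing such a color, giving the contradiction. For large $|V_{\mathrm{mid}}|$, select multiple vertices $v_1, v_2, \ldots \in V_{\mathrm{mid}} \setminus \{u_0\}$, each with $\overline{\varphi}(v_i) \subseteq \varphi(x) \cap \varphi(y)$; counting common $\Delta$-neighbors (using $\Delta \geq (2n + 5k - 12)/3$) produces a $z$ for which $(x, xy, y, yz, z, zv_1, v_1, zv_2, v_2, \ldots)$ is a short broom $B$. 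The Main Theorem~\ref{ne-broom} then forces $\sum_\alpha m_{\varphi, B}(\alpha) \leq 1$, capping the total overlap of the missing sets of the $v_i$'s inside the $(k-2)$-color palette $\varphi(x) \cap \varphi(y)$; since the lower-bound estimate $\sum_{v \neq x, y}|\overline{\varphi}(v)| \geq k$ concentrates all that deficiency on $V_{\mathrm{mid}} \setminus \{u_0\}$ (up to the bounded contribution of $u_0$ and $t$), one more application of the degree hypothesis forces the constraints to collide.
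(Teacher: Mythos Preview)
Your argument has a genuine gap at the parity step. You write that ``the conclusion of Theorem~\ref{overfull thm} forces $n$ to be odd,'' but this invokes the very theorem whose proof Claim~\ref{u v exit} is part of. For even $n$, Theorem~\ref{overfull thm} asserts that $G$ is overfull, which is impossible; the content of the theorem in that case is precisely that \emph{no} $\Delta$-critical graph with even order satisfies the degree hypothesis. That is something to be \emph{proved}, and the proof runs through Claim~\ref{u v exit}. So you cannot discard even $n$ up front, and your edge-count inequality $2|E(G)|\le (n-1)\Delta$ is unavailable there (for even $n$, ``not overfull'' only gives $2|E(G)|\le n\Delta$, which is vacuous).

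The residual-case sketches are also too thin to carry weight. For the large-$|V_{\mathrm{mid}}|$ branch you propose building a short broom $(x,xy,y,yz,z,zv_1,v_1,zv_2,v_2,\ldots)$ on vertices $v_i\in V_{\mathrm{mid}}\setminus\{u_0\}$, but by your own contradiction hypothesis each such $v_i$ satisfies $\overline{\varphi}(v_i)\subseteq\varphi(x)\cap\varphi(y)$. The broom definition requires $\varphi(zv_1)\in\overline{\varphi}(x)\cup\overline{\varphi}(y)$ and, inductively, $\varphi(zv_i)\in\overline{\varphi}(v_{i-1})\subseteq\varphi(x)\cap\varphi(y)$; you give no reason a single common neighbour $z$ of $y$ and several $v_i$ should carry edges with these particular colours, nor how Theorem~\ref{ne-broom} (which bounds repeated \emph{missing} colours on $V(B)$) converts into the numerical collision you assert. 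The $|V_{\mathrm{low}}|=1$ branch is likewise only a gesture.

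By contrast, the paper's proof avoids all of this by appealing to the Parity Lemma: for odd $n$ some colour is missing at $\ge 3$ vertices, and one Kempe swap (if that colour lies outside $\overline{\varphi}(x)\cup\overline{\varphi}(y)$) produces the two required vertices; for even $n$ every colour of $\overline{\varphi}(x)\cup\overline{\varphi}(y)$ is missing at some vertex of $V(G)\setminus\{x,y\}$, and since $|\overline{\varphi}(x)\cup\overline{\varphi}(y)|=\Delta-k+2$ exceeds $|\overline{\varphi}(t)|$, two such colours avoid $t$ and land on distinct mid-range vertices by Lemma~\ref{ovfdlc}(i). That route is short and handles both parities uniformly; you should adopt it.
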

		
		\begin{proof}
			For completeness, we provide the detailed argument from \cite{caoovf2022} here.
			
			Assume first that $n$ is odd.
			As $G$ is not overfull, some color in $[1, \Delta]$ is missing at least three distinct vertices of $G$ by the Parity Lemma. If there exists exactly one vertex $t \in V(G)\backslash\{x\}$ such that $d(t) \leq n-\Delta+2k-6$, then each color in $\overline{\varphi}(t) \cap (\overline{\varphi}(x) \cup \overline{\varphi}(y))$ is missing at another vertex from $V(G)\backslash\{x, y, t\}$. As $|\overline{\varphi}(t) \cap (\overline{\varphi}(x) \cup \overline{\varphi}(y))| \geq 2\Delta-n-2k+6-(k-2)=2\Delta-n-3k+8>k$ and $d(v) \geq \Delta-k+1$ for every $v \in V(G)\backslash\{x, y, t\}$, we can find $u, v \in V(G)\backslash\{x\}$ with the desired property in ($\ast\ast$). Thus for every vertex $v \in V(G) \backslash \{x, y\}$, $d(v) \geq \Delta - k + 1$. Let $\beta \in [1, \Delta]$ such that $\beta$ is missing at at least three vertices, say $a, b, c$, from $V(G)$. If $\beta \in \overline{\varphi}(x) \cup \overline{\varphi}(y)$, as $\overline{\varphi}(x) \cup \overline{\varphi}(y)$ is $\varphi$-elementary, then letting $u, v \in \{a, b, c\} \backslash \{x, y\}$ will give us a desired choice. So we assume $\beta \notin \overline{\varphi}(x) \cup \overline{\varphi}(y)$. Let $\alpha \in \overline{\varphi}(x) \cup \overline{\varphi}(y)$, say, w.l.o.g., that $\alpha \in \overline{\varphi}(x)$. As at most one of $a, b, c$, say $c$ is $(\alpha, \beta)$-linked with $a$, we do an $(\alpha, \beta)$-swap at the other two vertices $a$ and $b$. Call the new coloring still $\varphi$. Now $a$ and $b$ can play the role of $u$ and $v$.
			
			Assume now $n$ is even.
			As $G$ is not overfull, each color in $[1, \Delta]$ is missing at an even number of vertices of $G$ by the Parity Lemma. In particular, we have fact (a): each color in $\overline{\varphi}(x) \cup \overline{\varphi}(y)$ is missing at a vertex from $V(G)\backslash\{x, y\}$; fact (b): for every $v \in V(G)\backslash\{x, y\}$, if $\Delta-k+1 \leq d(v) \leq \Delta-1$, then $|\overline{\varphi}(v) \cap (\overline{\varphi}(x) \cup \overline{\varphi}(y))| \leq 1$ (by the second part of Lemma \ref{ovfdl}).
			Let $t \in V(G)\backslash\{x, y\}$ such that $d(t)$ is the smallest among the degrees of vertices from $V(G)\backslash\{x, y\}$. As $|\overline{\varphi}(x) \cup \overline{\varphi}(y)|=\Delta-k+2$ and $|\overline{\varphi}(t)| \leq \Delta-k$, there exist at least two distinct colors $\alpha, \beta \in (\overline{\varphi}(x) \cup \overline{\varphi}(y))\backslash\overline{\varphi}(t)$. By the assumption of this case and Lemma \ref{ovfdl}, we know that every vertex from $V(G)\backslash\{x, y, t\}$ has degree at least $\Delta-k+1$. Thus by facts (a) and (b), we can find $u, v\in V(G)\backslash\{x, y, t\}$ such that $\alpha \in \overline{\varphi}(u)$ and $\beta \in \overline{\varphi}(v)$. Clearly, $u$ and $v$ satisfy the property in ($\ast\ast$).
		\end{proof}
		Overall, the proof of the Theorem is complete.
	\end{proof}

\bibliography{referencesofthefirstpaper}

\begin{thebibliography}{22}
\providecommand{\natexlab}[1]{#1}
\providecommand{\url}[1]{\texttt{#1}}
\expandafter\ifx\csname urlstyle\endcsname\relax
  \providecommand{\doi}[1]{doi: #1}\else
  \providecommand{\doi}{doi: \begingroup \urlstyle{rm}\Url}\fi

\bibitem[Bongard et~al.(2003)Bongard, Hoffmann, and Volkmann]{Bongard2003}
K.~Bongard, A.~Hoffmann, and L.~Volkmann.
\newblock Minimum degree conditions for the overfull conjecture for odd order
  graphs.
\newblock \emph{Australasian Journal of Combinatorics}, 28:\penalty0 121--130,
  2003.

\bibitem[Bonvicini and Vietri(2020)]{BVA2020}
S.~Bonvicini and A.~Vietri.
\newblock A m{\"o}bius-type gluing technique for obtaining edge-critical
  graphs.
\newblock \emph{Ars Mathematica Contemporanea}, 2\penalty0 (19):\penalty0
  209--229, 2020.

\bibitem[Cao and Chen(2020)]{caoaveragedegree2}
Y.~Cao and G.~Chen.
\newblock On the average degree of edge chromatic critical graphs {II}.
\newblock \emph{Journal of Combinatorial Theory, Series B}, 145:\penalty0
  470--486, 2020.

\bibitem[Cao et~al.(2019)Cao, Chen, Jing, Stiebitz, and Toft]{Cao2019survey}
Y.~Cao, G.~Chen, G.~Jing, M.~Stiebitz, and B.~Toft.
\newblock Graph edge coloring: a survey.
\newblock \emph{Graphs and Combinatorics}, 35\penalty0 (1):\penalty0 33--66,
  2019.

\bibitem[Cao et~al.(2022{\natexlab{a}})Cao, Chen, Jing, and Shan]{caoovf2022}
Y.~Cao, G.~Chen, G.~Jing, and S.~Shan.
\newblock The overfullness of graphs with small minimum degree and large
  maximum degree.
\newblock \emph{SIAM Journal on Discrete Mathematics}, 36\penalty0
  (3):\penalty0 2258--2270, 2022{\natexlab{a}}.

\bibitem[Cao et~al.(2022{\natexlab{b}})Cao, Chen, and Shan]{CCSVS2022}
Y.~Cao, G.~Chen, and S.~Shan.
\newblock An improvement to the hilton-zhao vertex-splitting conjecture.
\newblock \emph{Discrete Mathematics}, 345\penalty0 (8):\penalty0 112902,
  2022{\natexlab{b}}.

\bibitem[Chen et~al.(2017)Chen, Chen, and Zhao]{ChenHamiltonicity2017}
G.~Chen, X.~Chen, and Y.~Zhao.
\newblock Hamiltonicity of edge chromatic critical graphs.
\newblock \emph{Discrete Mathematics}, 340\penalty0 (12):\penalty0 3011--3015,
  2017.

\bibitem[Chetwynd and Hilton(1986)]{ChetwyndHiltonStar1986}
A.~G. Chetwynd and A.~J.~W. Hilton.
\newblock Star multigraphs with three vertices of maximum degree.
\newblock \emph{Mathematical Proceedings of the Cambridge Philosophical
  Society}, 100\penalty0 (2):\penalty0 303--317, 1986.

\bibitem[Chetwynd and Hilton(1988)]{ChetwyndHiltonStar1988}
A.~G. Chetwynd and A.~J.~W. Hilton.
\newblock The edge-chromatic class of graphs with maximum degree at least {$|V|
  – 3$}.
\newblock volume~41, pages 91--110. 1988.

\bibitem[Fiorini and Wilson(1977)]{Fiorini1977}
S.~Fiorini and R.~J. Wilson.
\newblock \emph{Edge-colourings of graphs}.
\newblock Number~16. Pitman Publishing, 1977.

\bibitem[Gupta(1967)]{Gupta1967}
R.~P. Gupta.
\newblock \emph{Studies in the Theory of Graphs}.
\newblock PhD thesis, Indian Statistical Institute, Calcutta, 1967.

\bibitem[Hilton and Zhao(1997)]{vertex-splittingconj}
A.~J. Hilton and C.~Zhao.
\newblock Vertex-splitting and chromatic index critical graphs.
\newblock \emph{Discrete applied mathematics}, 76\penalty0 (1-3):\penalty0
  205--211, 1997.

\bibitem[Holyer(1981)]{Holyer1981}
I.~Holyer.
\newblock The np-completeness of edge-coloring.
\newblock \emph{SIAM Journal on computing}, 10\penalty0 (4):\penalty0 718--720,
  1981.

\bibitem[Kierstead(1984)]{Kierstead1984}
H.~A. Kierstead.
\newblock On the chromatic index of multigraphs without large triangles.
\newblock \emph{Journal of Combinatorial Theory, Series B}, 36\penalty0
  (2):\penalty0 156--160, 1984.

\bibitem[Plantholt(2004)]{plantholt2004overfull}
M.~Plantholt.
\newblock Overfull conjecture for graphs with high minimum degree.
\newblock \emph{Journal of Graph Theory}, 47\penalty0 (2):\penalty0 73--80,
  2004.

\bibitem[Shan(2024)]{Shan2024}
S.~Shan.
\newblock Towards the overfull conjecture.
\newblock \emph{arXiv preprint arXiv:2308.16808}, 2024.

\bibitem[Song(2002)]{Song2002}
Z.-X. Song.
\newblock A further extension of yap's construction for {$\Delta$}-critical
  graphs.
\newblock \emph{Discrete mathematics}, 243\penalty0 (1-3):\penalty0 283--290,
  2002.

\bibitem[Stiebitz et~al.(2012)Stiebitz, Scheide, Toft, and Favrholdt]{GECB}
M.~Stiebitz, D.~Scheide, B.~Toft, and L.~M. Favrholdt.
\newblock \emph{Graph edge coloring: Vizing's theorem and Goldberg's
  conjecture}.
\newblock John Wiley {\&} Sons, 2012.

\bibitem[Tashkinov(2000)]{Tashkinov2000}
V.~A. Tashkinov.
\newblock On an algorithm for the edge coloring of multigraphs.
\newblock \emph{Diskretnyi Analiz i Issledovanie Operatsii}, 7\penalty0
  (3):\penalty0 72--85, 2000.

\bibitem[Vizing(1964)]{Vizing1964}
V.~G. Vizing.
\newblock On an estimate of the chromatic class of a {$p$}-graph.
\newblock \emph{Diskret Analiz}, 3:\penalty0 25--30, 1964.

\bibitem[Vizing(1965{\natexlab{a}})]{Vizing1965}
V.~G. Vizing.
\newblock The chromatic class of a multigraph.
\newblock \emph{Cybernetics}, 1\penalty0 (3):\penalty0 32--41,
  1965{\natexlab{a}}.

\bibitem[Vizing(1965{\natexlab{b}})]{VizingCritical1965}
V.~G. Vizing.
\newblock Critical graphs with given chromatic class (in russian).
\newblock \emph{Metody Discret. Analiz.}, 5:\penalty0 9--17,
  1965{\natexlab{b}}.

\end{thebibliography}
	
\end{document}